\title{Boundedness of a class of discretized reaction-diffusion systems\thanks{Submitted to arXiv on February 14th, 2020. 
		\funding{JMW is supported in part by an NSF GRFP and in part by the Interdisciplinary Quantitative Biology (IQ Biology) program at the BioFrontiers Institute, University of Colorado, Boulder. IQ Biology is generously supported by NSF IGERT grant number 1144807. DMB is supported by the NSF/NIH Joint DMS/NIGMS Mathematical Biology Initiative (R01GM126559)}}}
\author{Jacqueline M. Wentz\thanks{Department of Applied Mathematics, University of Colorado, Boulder, CO
		(\email{jacqueline.wentz@colorado.edu}, \email{dmbortz@colorado.edu}).}
	\and David M. Bortz\footnotemark[2]}
\definecolor{customgray}{RGB}{217,217,217}
\begin{document}
\maketitle
\begin{abstract}
Although the spatially continuous version of the reaction-diffusion
equation has been well studied, in some instances a spatially-discretized
representation provides a more realistic approximation of biological
processes. Indeed, mathematically the discretized and continuous systems
can lead to different predictions of biological dynamics. It is well
known in the continuous case that the incorporation of diffusion can
cause diffusion-driven blow-up with respect to the $L^{\infty}$ norm.
However, this does not imply diffusion-driven blow-up will occur in
the discretized version of the system. For example, in a continuous
reaction-diffusion system with Dirichlet boundary conditions and nonnegative
solutions, diffusion-driven blow up occurs even when the total species
concentration is non-increasing. For systems that instead have homogeneous
Neumann boundary conditions, it is currently unknown whether this
deviation between the continuous and discretized system can occur.
Therefore, it is worth examining the discretized system independently
of the continuous system. Since no criteria exist for the boundedness
of the discretized system, the focus of this paper is to determine
sufficient conditions to guarantee the system with diffusion remains
bounded for all time. We consider reaction-diffusion systems on a
1D domain with homogeneous Neumann boundary conditions and non-negative
initial data and solutions. We define a Lyapunov-like function and
show that its existence guarantees that the discretized reaction-diffusion
system is bounded. These results are considered in the context of
three example systems for which Lyapunov-like functions can and cannot
be found. 
\end{abstract}

\begin{keywords}
	Reaction-diffusion systems, method of lines, boundedness, diffusion-induced blow up, Lyapunov functions
\end{keywords}

\begin{AMS}
	34C11, 35K57, 37B25, 37F99, 65N40
\end{AMS}

\section{Introduction}

The reaction-diffusion (RD) modeling framework is used in biological
and ecological literature to understand how systems with spatial dependencies
evolve over time \cite{Perthame2015}. This equation is used, for
example, to describe spatial population dynamics and phenomena such
as pattern formation. Although the spatially-continuous RD equation
is often studied, in some instances the spatially-discretized system
allows for a more accurate representation of biological dynamics.
For example, the discretized system is used to model patchy habitats
in ecology \cite{Allen1987,Flather2002} and may prove useful in studying
the effects of metabolic compartmentalization \cite{Zecchin2015}.
It is worth investigating the discretized system because the dynamics
may differ significantly from the continuous system. For example in
RD systems with Dirichlet boundary conditions, even when the total
mass is conserved (i.e., the $L^{1}$ norm is bounded) the system
may blow up in $L^{\infty}$ \cite{Pierre2010}. This implies that
the discretized and continuous RD systems behave differently.

The existence of extensive literature discussing diffusion-driven
blow up for the spatially-continuous system demonstrates that the
question of boundedness is nontrivial (for a review see \cite{Fila2005}).
It is a known phenomenon that the addition of diffusion can affect
the stability of steady-states leading to, for example, pattern formation,
but the instability may also lead to unbounded solutions \cite{Fila2005,Murray2003,Wentz2018a,Marciniak-Czochra2016}.
Boundedness results for the continuous RD system have been obtained
using duality arguments, Sobolev embedding theorems, and Lyapunov-type
structures \cite{Morgan1990,Hollis1987,Melkemi2007,Fitzgibbon1997}.
However, to the best of our knowledge, our work is the first that
derives conditions to guarantee the discretized RD system is uniformly
bounded for all time. 

Our approach is based on previous work that used the existence of
a Lyapunov-type function to prove that the continuous RD system is
uniformly bounded \cite{Morgan1990}. The Lyapunov-type function was
required to be radially unbounded, additively separable, convex, and
decreasing along solution trajectories. Here, we define a similar
function, which we denote as a \emph{Lyapunov-like function (LLF),}
but replace the requirement of separability with more general conditions.
Ultimately, the criteria placed on the LLF allows us to obtain boundedness
results for systems with diffusion-driven instabilities. 

We examine systems that have two species reacting and diffusing, homogeneous
Neumann boundary conditions, and guaranteed non-negativity of solutions.
Homogeneous Neumann boundary conditions are arguably more realistic
for modeling compartments in biological systems then Dirichlet conditions.
Additionally, since we are interested in diffusion-driven blow up,
our focus will be on systems that have bounded solutions in reaction-only
case. Under these conditions, diffusion-driven blow up has been shown
to occur in the spatially-continuous system when the kinetics have
a globally stable steady-state \cite{Weinberger1999} and when there
is a clear biological application \cite{Lou2001} (i.e., modeling
mutualistic populations in ecology). 

In this work we present and prove sufficient conditions for the discretized
version of the RD system to be uniformly bounded over time. In Section~\ref{sec:notation-and-definitions}
we present relevant notation and define the properties of a LLF. In
Section~\ref{sec:boundedness-theorems} we prove that the existence
of this LLF guarantees the discretized RD system is uniformly bounded
over time. In Section~\ref{sec:illustative-examples} we consider
the results in the context of three examples that have well been well
studied in the spatially-continuous case. For the first example, the
continuous system has a bounded diffusion-driven instability, and,
in the second two examples, the continuous systems can blow up in
finite time. It is worth studying these examples in the discretized
setting because it is unknown whether, generally, the continuous and
discretized systems have the same boundedness properties. In Section
\ref{sec:discussion} we conclude with a discussion of other applications
and ideas for future directions.

\section{Notation and definitions\label{sec:notation-and-definitions}}

We are interested in RD systems on the normalized spatial interval
$I=[0,1]$ with two species $u$ and $v$. We will discretize this
system with respect to space by creating $n$ spatial compartments,
where $\mathscr{\mathcal{N}}=\{1,2,..,n\}$ represents the set of
compartment indices (Figure~\ref{fig:discretized-system}). 
\begin{figure}
\begin{tikzpicture}[scale=.8, every node/.style={scale=0.8}]
	\draw (0,0) -- (7,0);
	\draw (0,3) -- (7,3);
	\draw (8.5,0) -- (12.5,0);
	\draw (8.5,3) -- (12.5,3);
	\draw (0,0) -- (0,3);
	\draw (3,0) -- (3,3);
	\draw (6,0) -- (6,3);
	\draw (9.5,0) -- (9.5,3);
	\draw (12.5,0) -- (12.5,3);
	\node at (1.5,1.4){\underline{Reactions}};
	\node at (1.5,1){$f(u_1,v_1)$};
	\node at (1.5,0.6){$g(u_1,v_1)$};
	\node at (3,3.2){Diffusion};
	\node at (3, 2.6){$\ce{u_1 <-> u_2}$};
	\node at (3, 2.2){$\ce{v_1 <-> v_2}$};
	\node at (4.5,1.4){\underline{Reactions}};
	\node at (4.5,1){$f(u_2,v_2)$};
	\node at (4.5,0.6){$g(u_2,v_2)$};
	\node at (6,3.2){Diffusion};
	\node at (6, 2.6){$\ce{u_2 <-> u_3}$};
	\node at (6, 2.2){$\ce{v_2 <-> v_3}$};
	\node at (7.75,0){...};
	\node at (7.75,3){...};
	\node at (9.5,3.2){Diffusion};
	\node at (9.31, 2.6){$\ce{u_{n-1} <-> u_{n}}$};
	\node at (9.31, 2.2){$\ce{v_{n-1} <-> v_{n}}$};
	\node at (11,1.4){\underline{Reactions}};
	\node at (11,1){$f(u_n,v_n)$};
	\node at (11,0.6){$g(u_n,v_n)$};
	\node at (0,-.3) {$x_0$};
	\node at (3,-.3) {$x_1$};
	\node at (6,-.3) {$x_2$};
	\node at (9.5,-.3) {$x_{n-1}$};
	\node at (12.5,-.3) {$x_n$};
	\draw[<->] (0,-.6) -- (3,-.6);
	\node at (1.5,-.8) {$h$};
	\end{tikzpicture}
\centering{}\caption{Discretized system with $n$ spatial compartments. Reactions $f$
and $g$ occur within each compartment, and diffusion occurs between
adjacent compartments. \label{fig:discretized-system}}
\end{figure}
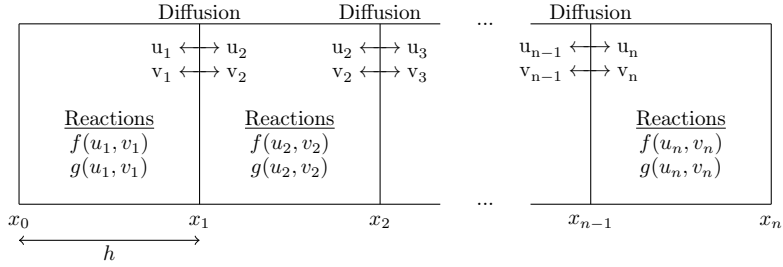
Let $h$ denote the uniform width of each compartment and $x_{0},x_{1},...,x_{n}$
denote the compartment edges (i.e., $x_{i}=ih$ where $h=1/n$). Let
$\mathbf{u}=(u_{1},u_{2},...,u_{n})^{T}$ and $\mathbf{v}=(v_{1},v_{2},...,v_{n})^{T}$
represent the average concentration of $u$ and $v$ in each of the
$n$ spatial compartments, where these concentrations are assumed
to be dimensionless. Let $\mathbf{f}(\mathbf{u},\mathbf{v})=(f(u_{1},v_{1}),f(u_{2},v_{2}),...,f(u_{n},v_{n}))^{T}$
and $\mathbf{g}(\mathbf{u},\mathbf{v})=(g(u_{1},v_{1}),g(u_{2},v_{2}),...,g(u_{n},v_{n}))^{T}$
represent the reactions taking place in each compartment. We will
model diffusion as a Fickian flux between two adjacent compartments
and, therefore, define 
\begin{equation}
D:=\left[\begin{array}{lllll}
\text{-}1 & 1 &  &  & 0\\
1 & \text{-}2 & 1\\
 & \ddots & \ddots & \ddots\\
 &  & 1 & \text{-}2 & 1\\
0 &  &  & 1 & \text{-}1
\end{array}\right]\label{eq:D}
\end{equation}
as the centered finite-difference matrix with homogeneous Neumann
boundary conditions. 

This leads to the following initial value problem 
\begin{equation}
\begin{aligned}\mathbf{u}_{t} & =\gamma\mathbf{f}(\mathbf{u},\mathbf{v})+\frac{1}{h^{2}}D\mathbf{u}\\
\mathbf{v}_{t} & =\gamma\mathbf{g}(\mathbf{u},\mathbf{v})+\frac{1}{h^{2}}dD\mathbf{v}\\
u_{i}(0) & =u_{i,0} &  & \text{for }i=1,2,...,n\\
v_{i}(0) & =v_{i,0} &  & \text{for }i=1,2,...,n
\end{aligned}
\label{eq:sysDisc}
\end{equation}
where $\gamma>0$ and $d>0$ are constants that are related to the
size of the domain and the diffusion coefficients (see \cite{Murray2003}
for a discussion of these parameters). To guarantee non-negativity
of solutions, we will require that $u_{i,0}\ge0$, $v_{i,0}\ge0$
for all $i\in\mathcal{N}$ and $f(0,v)\ge0$, $g(u,0)\ge0$ for all
$u,v\in[0,\infty)$. We will further require that $f$ and $g$ be
continuously differentiable. For simplicity we will assume that the
parameters are constant across space. This includes both reaction
parameters (i.e., constants within the functions $f$ and $g$) as
well as spatial parameters (i.e., $\gamma$ and $d$). Note that the
results can be easily generalized to systems with spatially varying
parameters. By the Picard-Lindelöf theorem, there is a $T_{max}>0$
such that a noncontinuable classical and unique solution to (\ref{eq:sysDisc})
exists for $t\in[0,T_{max})$ where it is possible that $T_{max}=\infty$.
Since the solution is classical, we know that $\mathbf{u}(t)$ and
$\mathbf{v}(t)$ are continuous for $t\in[0,T_{max})$ and if $T_{max}<\infty$,
then an element of $\mathbf{u}(t)$ and/or $\mathbf{v}(t)$ becomes
unbounded as $t\rightarrow T_{max}$. Thus, if the solution is bounded
for $t\in[0,T_{max})$, then $T_{max}=\infty$. 

Throughout the paper we will be using $\|\cdot\|:\mathbb{R}^{2}\rightarrow\mathbb{R}$
to represent the $l_{1}$-norm and we define the \emph{total species
concentration} as $\|(u,v)\|=u+v$. Furthermore, we will use variations
of $L$ (e.g., $L$, $\widetilde{L}$, $L_{i}$) to represent arbitrary
nonnegative constants.

\subsection{Lyapunov-like function\label{subsec:lyapunov-like-function} }

In this section we will define a \emph{Lyapunov-like function} (LLF)
for the reactions $f$ and $g$ given in (\ref{eq:sysDisc}). We will
later prove that the existence of this LLF guarantees that the discretized
RD system (\ref{eq:sysDisc}) is uniformly bounded for all time. The
classical definition of a Lyapunov function is a continuously differentiable,
locally positive-definite, scalar function that decreases along solution
trajectories in the neighborhood of a steady state. The LLF defined
here will instead decrease along reaction trajectories (i.e., solutions
when diffusion is not included) when the total species concentration
surpasses a threshold value. 

Throughout the paper we will use $W$ to denote a LLF. Let $W:\mathbb{R}_{\ge0}^{2}\rightarrow\mathbb{\mathbb{R}}_{\ge0}$
be a twice continuously differentiable function. To denote the partial
derivative of $W$ with respect to $u$ and $v$ we will use the notation
$\partial_{u}W$ and $\partial_{v}W$, respectively. We will use $(W)_{i}$,
$(\partial_{u}W)_{i}$, and $(\partial_{v}W)_{i}$ to denote the value
of $W$ and its partial derivatives evaluated in compartment $i$
(e.g., $(W)_{i}=W(u_{i},v_{i})$). Furthermore, we define the vectors
$\mathbf{W}:=((W)_{1},(W)_{2}...(W)_{n})$, $\partial_{u}\mathbf{W}:=((\partial_{u}W)_{1},(\partial_{u}W)_{2},...,(\partial_{u}W)_{n})$,
and $\partial_{v}\mathbf{W}:=((\partial_{v}W)_{1},(\partial_{v}W)_{2},...,(\partial_{v}W)_{n})$.

We say that $W$ is a LLF for the reactions $f$ and $g$ if $W$
satisfies five properties, denoted below as \ref{prop:decreases-with-time}--\ref{prop:finite-or-infinite-limits}.
These properties imply secondary properties on $W$, which we will
also present below. We first state three of the required properties,
i.e. \ref{prop:decreases-with-time}--\ref{prop:radially-unbounded}.
Notably \ref{prop:decreases-with-time} is the only property that
depends on the reactions $f$ and $g$.\bgroup 
\renewcommand\theenumi{(P\arabic{enumi})} 
\renewcommand\labelenumi{\theenumi}
\begin{enumerate}
\item \label{prop:decreases-with-time}There exists a $\text{\ensuremath{\underbar{K}}}>0$
such that if $\|(u,v)\|\ge\text{\ensuremath{\underbar{K}}}$ then
\begin{equation}
\left(\nabla W(u,v)\right)(f(u,v),g(u,v))^{T}\le0.\label{eq:decreases-with-time}
\end{equation}
\item \label{prop:convex} For all $(u,v)\in\mathbb{R}_{\ge0}^{2}$ the
second derivatives of $W$ are strictly positive, 
\begin{align*}
\partial_{uu}W(u,v) & >0,\quad\partial_{vv}W(u,v)>0,
\end{align*}
and the mixed partial derivative is non-negative
\[
\partial_{uv}W(u,v)\ge0.
\]
 
\item \label{prop:radially-unbounded} As the total species concentration
goes to infinity, the LLF approaches infinity:
\[
\lim_{\|(u,v)\|\rightarrow\infty}W(u,v)=\infty.
\]
\end{enumerate}
\egroup
\noindent The requirement that (\ref{eq:decreases-with-time}) holds only if
the total species concentration is large enough leads to a more complicated
boundedness proof but allows for LLFs to exist for systems that have
diffusion-driven instabilities. 

An additively-separable Lyapunov type function with properties similar
to \ref{prop:decreases-with-time}--\ref{prop:radially-unbounded}
was used to obtain a boundedness result in the continuous case \cite{Morgan1990}.
Notably requiring additive separability in addition to \ref{prop:decreases-with-time}--\ref{prop:radially-unbounded}
would be sufficient for proving the boundedness results in this paper.
However, we do not require the LLF to be additively separably because
it does not simplify the proofs significantly. Indeed, the final two
properties \ref{prop:level-set-tangent-lines}, \ref{prop:finite-or-infinite-limits}
are more general than requiring separability of the LLF. 

Before we present these final properties, we will provide some needed
notation and secondary properties that follow from \ref{prop:convex},
\ref{prop:radially-unbounded}. Variations of the letter $M$ (e.g.,
$M^{(L)}$, $M_{u}^{(L)}$, $M_{v}^{(L)}$) will be used to represent
a maximum value of either $W$ or a partial derivative of $W$ in
regions of $\mathbb{R}_{\ge0}^{2}$ where $\|(u,v)\|$ is constant.
For $L>0$ define
\begin{equation}
\begin{aligned}M^{(L)} & :=\max_{\|(u,v)\|=L}W(u,v)\\
M_{u}^{(L)} & :=\max_{\|(u,v)\|=L}\partial_{u}W(u,v)\\
M_{v}^{(L)} & :=\max_{\|(u,v)\|=L}\partial_{v}W(u,v).
\end{aligned}
\label{eq:M}
\end{equation}
For the partial derivatives of $W$, we will also consider what happens
in the limit as $u$ or $v$ approaches infinity. Thus, we define

\begin{equation}
\begin{aligned}M_{u}^{(\infty)}(v) & :=\lim_{u\rightarrow\infty}\partial_{u}W(u,v)\\
M_{v}^{(\infty)}(u) & :=\lim_{v\rightarrow\infty}\partial_{v}W(u,v).
\end{aligned}
\label{eq:Mu-Mv-infinity}
\end{equation}
By \ref{prop:convex} we know these limits either converge and exist
or diverge to infinity. 

In the following corollary we prove the existence of three additional
constants, $\underline{u},\underline{v}$, and $K$, that will be
used in Section \ref{sec:boundedness-theorems}.\bgroup 
\renewcommand\theenumi{(C\arabic{enumi})} 
\renewcommand\labelenumi{\theenumi}
\begin{corollary}
\label{cor:secondary-properties-1}Suppose $W:\mathbb{R}_{\ge0}^{2}\rightarrow\mathbb{\mathbb{R}}_{\ge0}$
is twice continuously differentiable. If $W$ satisfies \ref{prop:convex},
the following property holds:
\begin{enumerate}
\item \label{secondary-prop:Mu-L-Mv-L-increasing} The parameters $M_{u}^{(L)}$,
$M_{v}^{(L)}$ are monotonically increasing with respect to $L$. 
\end{enumerate}
If, in addition, $W$ satisfies \ref{prop:radially-unbounded}, then
the following properties also hold:
\begin{enumerate}[start=2]
\item \label{secondary-prop:u-v-underbar-exist}There exists constants
$\underline{u}$ and $\text{\ensuremath{\underline{v}}}$ such that
$\partial_{u}W(u,v)>0$ for all $u\ge\text{\ensuremath{\underline{u}}}$
and $\partial_{v}W(u,v)>0$ for all $v\ge\underline{v}$. 
\item \label{secondary-prop:K-exists} There exists a $K\ge\max\left\{ \text{\ensuremath{\underline{K}}},\text{\ensuremath{\underline{u}}},\underline{v}\right\} $
such that if $L<K$ then $M^{(L)}<M^{(K)}$.
\end{enumerate}
\end{corollary}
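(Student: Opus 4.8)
The plan is to handle the three claims in order, exploiting two consequences of the hypotheses: convexity \ref{prop:convex} forces each first partial to be monotone in each argument, while radial unboundedness \ref{prop:radially-unbounded} forces $W$ to grow to infinity along the coordinate axes. For \ref{secondary-prop:Mu-L-Mv-L-increasing}, I would argue by a horizontal shift. Fix $L_1<L_2$ and let $(u^*,v^*)$ with $u^*+v^*=L_1$ maximize $\partial_u W$ over the compact segment $\{(u,v)\in\mathbb{R}_{\ge0}^2 : u+v=L_1\}$. The shifted point $(u^*+(L_2-L_1),v^*)$ lies on $\{u+v=L_2\}$, has the same $v$-coordinate and a strictly larger $u$-coordinate; since $\partial_{uu}W>0$ makes $u\mapsto\partial_u W(u,v^*)$ increasing, $M_u^{(L_2)}\ge\partial_u W(u^*+(L_2-L_1),v^*)\ge\partial_u W(u^*,v^*)=M_u^{(L_1)}$. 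A vertical shift together with $\partial_{vv}W>0$ gives the monotonicity of $M_v^{(L)}$.

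For \ref{secondary-prop:u-v-underbar-exist}, I would first reduce to the axes using $\partial_{uv}W\ge0$, which makes $\partial_u W$ nondecreasing in $v$, so $\partial_u W(u,v)\ge\partial_u W(u,0)$ for all $v\ge0$ (and symmetrically $\partial_v W(u,v)\ge\partial_v W(0,v)$). It then suffices to find $\underline u$ with $\partial_u W(u,0)>0$ for $u\ge\underline u$. If no such threshold existed, then since $\partial_{uu}W>0$ makes $u\mapsto\partial_u W(u,0)$ strictly increasing, we would have $\partial_u W(u,0)\le0$ for all $u\ge0$, so $W(\cdot,0)$ would be nonincreasing and bounded above by $W(0,0)$; this contradicts \ref{prop:radially-unbounded}, since $\|(u,0)\|=u\to\infty$ forces $W(u,0)\to\infty$. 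Monotonicity then propagates positivity to all $u\ge\underline u$, and $\underline v$ is obtained symmetrically.

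For \ref{secondary-prop:K-exists}, the strategy combines strict growth of $M^{(L)}$ at large $L$ with a compactness bound at small $L$. First I would show $M^{(L)}$ is strictly increasing on $[\underline u+\underline v,\infty)$: for $\underline u+\underline v\le L_1<L_2$, a maximizer $(u^*,v^*)$ of $W$ on $\{u+v=L_1\}$ must satisfy $u^*\ge\underline u$ or $v^*\ge\underline v$ (else $u^*+v^*<\underline u+\underline v\le L_1$), and shifting in that coordinate strictly raises $W$ by \ref{secondary-prop:u-v-underbar-exist} and the fundamental theorem of calculus, so $M^{(L_2)}>M^{(L_1)}$. Next, set $c:=\max\{\underline K,\underline u+\underline v\}$ (taking $\underline u,\underline v\ge0$, so $c\ge\max\{\underline K,\underline u,\underline v\}$); as $W$ is continuous on the compact triangle $\{u+v\le c\}$ it is bounded there by some $M^*$, whence $M^{(L)}\le M^*$ for all $L\le c$. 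Since $M^{(L)}\ge W(L,0)\to\infty$, I can choose $K>c$ with $M^{(K)}>M^*$. Then $M^{(L)}<M^{(K)}$ for every $L<K$: the bound $M^{(L)}\le M^*<M^{(K)}$ covers $L\le c$, and strict monotonicity on $[c,\infty)$ covers $c<L<K$.

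I expect \ref{secondary-prop:K-exists} to be the main obstacle, since $M^{(L)}$ need not be monotone for small $L$, so the argument must be split into a compactness estimate for small levels and a monotonicity estimate for large levels. The key technical point is choosing the threshold as $\underline u+\underline v$ rather than $\max\{\underline u,\underline v\}$, which is exactly what guarantees that every maximizer on $\{u+v=L\}$ has at least one coordinate large enough for \ref{secondary-prop:u-v-underbar-exist} to apply.
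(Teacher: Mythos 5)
Your proposal is correct, and parts \ref{secondary-prop:Mu-L-Mv-L-increasing} and \ref{secondary-prop:u-v-underbar-exist} run along essentially the paper's lines: the paper proves \ref{secondary-prop:Mu-L-Mv-L-increasing} by parametrizing the level segment and using $\partial_{uu}W>0$ to compare $\partial_u W(L_1-v,v)$ with $\partial_u W(L_2-v,v)$ (your maximizer-plus-horizontal-shift argument is the same computation), and it proves \ref{secondary-prop:u-v-underbar-exist} via the Mean Value Theorem on $W(\cdot,0)$ followed by propagation through $\partial_{uu}W>0$ and $\partial_{uv}W\ge 0$, where you instead argue by contradiction from monotonicity of $\partial_u W(\cdot,0)$ — the same ingredients in contrapositive form. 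For \ref{secondary-prop:K-exists} your route is genuinely reorganized: the paper defines $\hat{M}:=\max_{\|(u,v)\|\le \underline{u}+\underline{v}}W$, uses \ref{prop:radially-unbounded} to produce thresholds $\widetilde{u},\widetilde{v}$ beyond which $W>\hat{M}$ on the strips $v\le\underline{v}$ and $u\le\underline{u}$, sets $K:=\max\{\underline{K},\widetilde{u}+\widetilde{v}\}$, and then verifies $M^{(L)}<M^{(K)}$ for each $L<K$ directly by lifting points of the $L$-level to the $K$-level in whichever coordinate has positive partial; you instead isolate a standalone lemma (strict monotonicity of $L\mapsto M^{(L)}$ on $[\underline{u}+\underline{v},\infty)$, via the same threshold trick that a maximizer on $\{u+v=L\}$ must have $u^*\ge\underline{u}$ or $v^*\ge\underline{v}$) and combine it with a compactness bound $M^*$ on small levels and divergence $M^{(L)}\ge W(L,0)\to\infty$ to choose $K$. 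Your factored version is cleaner and makes reusable the fact that $M^{(L)}$ is eventually strictly increasing and unbounded; the paper's construction yields the slightly stronger fact that \emph{every} point on the level set $\|(u,v)\|=K$ satisfies $W(u,v)>\hat M$, which is implicitly convenient later when arguing that $\Omega_K$ contains the full sublevel region of $W=M^{(K)}$. One small precision issue: in \ref{secondary-prop:Mu-L-Mv-L-increasing} you wrote $\partial_u W(u^*+(L_2-L_1),v^*)\ge \partial_u W(u^*,v^*)$, but since $\partial_{uu}W>0$ is strict this inequality is strict, and strictness is actually needed downstream (positivity of the constant $C_1=1-M_u^{(L)}/M_u^{(\widetilde L)}$ in Lemma~\ref{lemma:int-flux-effect-term-bbd-u} requires $M_u^{(L)}<M_u^{(\widetilde L)}$); your argument delivers this with no change, so it is a wording fix rather than a gap.
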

\noindent For the proof of this corollary see Appendix \ref{app-sec:proofOfC1andC2}.

For the fourth property, we will consider level-sets of $W$ and how
the tangent lines to the level-sets behave (Figure \ref{fig:level-set-tangent-lines-example}).
For every point $(u,v)\in\mathbb{R}_{\ge0}^{2}$ there exists a level
set of $W$ and corresponding tangent line that intersects $(u,v)$.
The following property describes how these tangent lines behave as
the total species concentration becomes large. 

\egroup
\bgroup 
\renewcommand\theenumi{(P\arabic{enumi})} 
\renewcommand\labelenumi{\theenumi}
\begin{enumerate}[resume, start=4]
\item \label{prop:level-set-tangent-lines} For a fixed value of $u$,
the level-set tangent lines do not become parallel to the $v$-axis
in the limit as $v\rightarrow\infty$, i.e.,
\[
\sup_{v\ge\text{\ensuremath{\underline{v}}}}\left|\frac{\partial_{u}W(u,v)}{\partial_{v}W(u,v)}\right|<\infty\text{ for all }u.
\]
 Similarly, for a fixed value of $v$, the level-set tangent lines
do not become parallel to the $u$-axis as $u\rightarrow\infty$,
i.e., 
\[
\begin{aligned}\sup_{u\ge\underline{u}} & \left|\frac{\partial_{v}W(u,v)}{\partial_{u}W(u,v)}\right|<\infty\text{ for all }v.\end{aligned}
\]
\end{enumerate}
From this property we immediately see that for all $L\ge0$ the following
constants exist and are finite:
\begin{equation}
\begin{aligned}R_{u,L} & :=\sup_{u\le L,v\ge\underline{v}}\left|\frac{\partial_{u}W(u,v)}{\partial_{v}W(u,v)}\right|\\
R_{v,L} & :=\sup_{v\le L,u\ge\underline{u}}\left|\frac{\partial_{v}W(u,v)}{\partial_{u}W(u,v)}\right|.
\end{aligned}
\label{eq:RuL-RvL}
\end{equation}
\begin{figure}
\centering{}\includegraphics[scale=0.85]{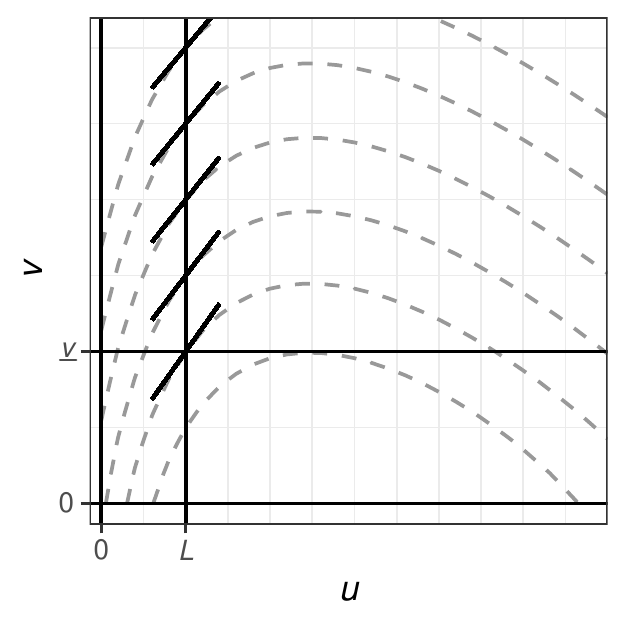}\includegraphics[scale=0.85]{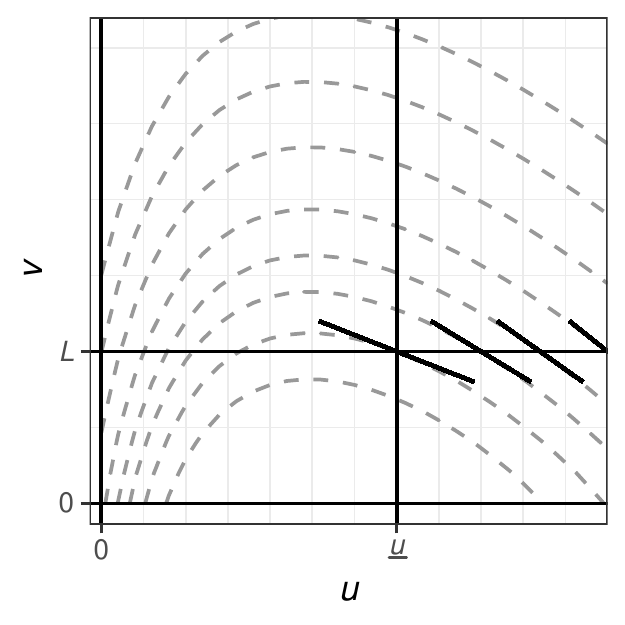}\caption{Graphical description of \ref{prop:level-set-tangent-lines}. The
dashed lines represent level sets of an example LLF and the black
line segments represent tangent lines. Left: At $u=L$ and $v\ge\text{\ensuremath{\underline{v}} }$the
level-set tangent lines do not approach vertical. Right: At $v=L$
and $u\ge\text{\ensuremath{\underline{u}} }$the level-set tangent
lines do not approach horizontal. \label{fig:level-set-tangent-lines-example}}
\end{figure}

For the final property, we place requirements on the limits of the
partial derivatives of the LLF.
\begin{enumerate}[resume]
\item \label{prop:finite-or-infinite-limits} For all $v\in[0,\infty)$,
either
\begin{enumerate}
\item $M_{u}^{(\infty)}(v)$ is finite and $\lim_{u\rightarrow\infty}\partial_{uv}W(u,v)$
exists and is finite, or 
\item $M_{u}^{(\infty)}(v)$ is infinite.
\end{enumerate}
Similarly, for all $u\in[0,\infty)$, either
\begin{enumerate}
\item $M_{v}^{(\infty)}(u)$ is finite and $\lim_{v\rightarrow\infty}\partial_{uv}W(u,v)$
exists and is finite, or 
\item $M_{v}^{(\infty)}(u)$ is infinite. 
\end{enumerate}
\end{enumerate}
\egroup{}
\bgroup 
\renewcommand\theenumi{(C\arabic{enumi})} 
\renewcommand\labelenumi{\theenumi} 

Now that we have stated all five properties, we will provide a formal
definition of a LLF.
\begin{definition}
\label{def:LLF}Let $W:\mathbb{R}_{\ge0}^{2}\rightarrow\mathbb{\mathbb{R}}_{\ge0}$
be a twice continuously differentiable function. Consider the discretized
system given by (\ref{eq:sysDisc}). Then $W$ is a LLF for this system
if \ref{prop:decreases-with-time}--\ref{prop:finite-or-infinite-limits}
are satisfied.
\end{definition}
Finally, we will show that $M_{u}^{(\infty)}(v)$ and $M_{v}^{(\infty)}(u)$,
given by (\ref{eq:Mu-Mv-infinity}), are constant functions and will
therefore be referred to as $M_{u}^{(\infty)}$ and $M_{v}^{(\infty)}$,
respectively.
\begin{corollary}
\label{cor:secondary-properties-2}The properties \ref{prop:convex},
\ref{prop:level-set-tangent-lines}, and \ref{prop:finite-or-infinite-limits}
imply the following secondary properties:
\begin{enumerate}[start=4]
\item \label{secondary-prop:Mu-infinity-constant} $M_{u}^{(\infty)}:=M_{u}^{(\infty)}(v)$
is independent of $v$ and $M_{u}^{(\infty)}>M_{u}^{(L)}$ for all
$L\in[0,\infty)$.
\item \label{secondary-prop:Mv-infinity-constant} $M_{v}^{(\infty)}:=M_{v}^{(\infty)}(u)$
is independent of $u$ and $M_{v}^{(\infty)}>M_{v}^{(L)}$ for all
$L\in[0,\infty)$.
\end{enumerate}
\end{corollary}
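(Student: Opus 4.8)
The plan is to prove \ref{secondary-prop:Mu-infinity-constant}; the statement \ref{secondary-prop:Mv-infinity-constant} then follows by the symmetric argument obtained from interchanging the roles of $u$ and $v$ (using the second halves of \ref{prop:convex}, \ref{prop:level-set-tangent-lines}, and \ref{prop:finite-or-infinite-limits}). First I would record the monotonicity coming from \ref{prop:convex}: since $\partial_{uu}W>0$, the map $u\mapsto\partial_u W(u,v)$ is strictly increasing, so $M_u^{(\infty)}(v)=\lim_{u\to\infty}\partial_u W(u,v)=\sup_u\partial_u W(u,v)$ exists in $(-\infty,\infty]$; and since $\partial_{uv}W\ge0$, the map $v\mapsto\partial_u W(u,v)$ is non-decreasing, whence $v\mapsto M_u^{(\infty)}(v)$ is non-decreasing. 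Thus $\{v:M_u^{(\infty)}(v)<\infty\}$ is an interval (containing $0$ on its left, if nonempty), and it suffices to show this set is either empty or all of $[0,\infty)$ and that $M_u^{(\infty)}$ is constant on it.

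The key preliminary step is that whenever $v$ lies in case (a) of \ref{prop:finite-or-infinite-limits} (so $M_u^{(\infty)}(v)<\infty$), then $\lim_{u\to\infty}\partial_{uv}W(u,v)=0$. Indeed, by \ref{prop:level-set-tangent-lines} the ratio $\partial_v W/\partial_u W$ is bounded for $u\ge\underline u$, and since $\partial_u W(u,v)\le M_u^{(\infty)}(v)<\infty$, the function $u\mapsto\partial_v W(u,v)$ is bounded; it is also non-decreasing in $u$ (as $\partial_{uv}W\ge0$) and hence converges. A bounded, convergent function whose derivative $\partial_{uv}W(u,v)$ itself converges—which it does by case (a)—must have that derivative tend to $0$.

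With this in hand, constancy of $M_u^{(\infty)}$ on the finite region follows from a Tonelli argument. For $v_1<v_2$ in this region write $q(u):=\partial_u W(u,v_2)-\partial_u W(u,v_1)=\int_{v_1}^{v_2}\partial_{uv}W(u,s)\,ds\ge0$, so $q(u)\to M_u^{(\infty)}(v_2)-M_u^{(\infty)}(v_1)=:\delta\ge0$. Using the finite constant $R_{v,v_2}$ from \eqref{eq:RuL-RvL} together with $\partial_u W(u,s)\le M_u^{(\infty)}(v_2)$, one obtains a bound $\partial_v W(u,s)\le B$ uniform over $u\ge\underline u$ and $s\in[v_1,v_2]$, hence $\int_{\underline u}^{\infty}\partial_{uv}W(u,s)\,du\le B$ for each such $s$. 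Interchanging the order of integration gives $\int_{\underline u}^{\infty}q(u)\,du\le B(v_2-v_1)<\infty$, and a non-negative integrable function that converges must converge to $0$; thus $\delta=0$ and $M_u^{(\infty)}$ is constant on the finite region. The second assertion $M_u^{(\infty)}>M_u^{(L)}$ is then short: at a point $(\hat u,\hat v)$ maximizing $\partial_u W$ on the compact segment $\|(u,v)\|=L$, strict monotonicity in $u$ gives $M_u^{(L)}=\partial_u W(\hat u,\hat v)<\lim_{u\to\infty}\partial_u W(u,\hat v)=M_u^{(\infty)}$, while if the limit is infinite the inequality is immediate because each $M_u^{(L)}$ is a finite maximum of a continuous function over a compact segment.

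The main obstacle is ruling out a jump from the finite to the infinite regime at a finite threshold $v^\ast:=\sup\{v:M_u^{(\infty)}(v)<\infty\}$. I would first check that the threshold itself is in case (a): for fixed $u$ and $s<v^\ast$ one has $\partial_u W(u,s)\le M_u^{(\infty)}(s)=c$ (the constant value on the finite region), so letting $s\uparrow v^\ast$ and using continuity of $\partial_u W$ gives $\partial_u W(u,v^\ast)\le c$ for all $u$, i.e. $M_u^{(\infty)}(v^\ast)\le c<\infty$. It then remains to contradict $M_u^{(\infty)}(v)=\infty$ for every $v>v^\ast$. The difficulty is that case (b) of \ref{prop:finite-or-infinite-limits} places no direct control on $\partial_{uv}W$, yet $\int_{v^\ast}^{v}\partial_{uv}W(u,s)\,ds=\partial_u W(u,v)-\partial_u W(u,v^\ast)\to\infty$ forces $\partial_{uv}W(u,\cdot)$—and hence the growth of $\partial_v W$ in $u$—to be arbitrarily large just above $v^\ast$, which is precisely what \ref{prop:level-set-tangent-lines} forbids: switching on unbounded growth of $\partial_u W$ in $u$ immediately above $v^\ast$ makes the level-set tangent-line ratio $\partial_v W/\partial_u W$ blow up as $u\to\infty$. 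Making this incompatibility quantitative—so that finiteness propagates strictly above $v^\ast$, the finite region is open as well as closed, and is therefore all of $[0,\infty)$—is the delicate part of the argument; once it is established, independence of $v$ follows and the proof of \ref{secondary-prop:Mu-infinity-constant} is complete.
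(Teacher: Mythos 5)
Your proposal is incomplete exactly where you say it is, and under your reading of \ref{prop:finite-or-infinite-limits} the gap cannot be closed. The paper treats \ref{prop:finite-or-infinite-limits} as a \emph{global} dichotomy and dismisses the mixed regime in one sentence: ``if for any $v$, $M_u^{(\infty)}(v)=\infty$, then by \ref{prop:finite-or-infinite-limits}, $M_u^{(\infty)}(v)=\infty$ for all $v$.'' You read the property pointwise (for each $v$, either (a) or (b)), which forces you to rule out a finite threshold $v^{\ast}$ separating a finite from an infinite regime --- and the ``quantitative incompatibility'' with \ref{prop:level-set-tangent-lines} that you hope will do this does not exist. Consider $W(u,v)=u+e^{-u}+\phi(v)e^{u}+v^{2}+v$ with $\phi(v)=\exp\bigl(v-\tfrac{1}{v-1}\bigr)$ for $v>1$ and $\phi\equiv0$ for $v\le1$; $\phi$ is $C^{\infty}$, nonnegative, nondecreasing, and convex (convexity reduces to $t^{4}+2t^{2}-2t+1>0$ for $t=v-1>0$). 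Then $\partial_{uu}W=\phi e^{u}+e^{-u}>0$, $\partial_{vv}W=\phi''e^{u}+2>0$, $\partial_{uv}W=\phi'e^{u}\ge0$, so \ref{prop:convex} holds (and $W\ge u+v$, so \ref{prop:radially-unbounded} holds too). Property \ref{prop:level-set-tangent-lines} holds: $\partial_{v}W\ge1$; for each fixed $v>1$, $\partial_{v}W/\partial_{u}W\to\phi'(v)/\phi(v)=1+(v-1)^{-2}<\infty$ as $u\to\infty$ (and the ratio is bounded for $v\le1$); for each fixed $u$, $\partial_{u}W/\partial_{v}W\to\phi/\phi'\to1$ as $v\to\infty$. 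The pointwise form of \ref{prop:finite-or-infinite-limits} holds as well: for $v\le1$, $M_u^{(\infty)}(v)=1$ and $\lim_{u\to\infty}\partial_{uv}W=0$ (case (a)); for $v>1$, $M_u^{(\infty)}(v)=\infty$ (case (b)); and $M_v^{(\infty)}(u)=\infty$ for every $u$. Yet $M_u^{(\infty)}(v)$ jumps from $1$ to $\infty$ at $v=1$, so the corollary's conclusion fails. Hence finiteness cannot propagate above $v^{\ast}$ from \ref{prop:convex}, \ref{prop:level-set-tangent-lines}, and pointwise \ref{prop:finite-or-infinite-limits} alone; the missing step is not merely delicate but impossible, and the only repair is to adopt the paper's global reading (your example threshold set is, as you correctly observe, closed --- it is openness that fails).

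The rest of your argument is sound, and in the all-finite case it is a genuinely different and in one respect cleaner route than the paper's. Both you and the paper first show $\lim_{u\to\infty}\partial_{uv}W(u,v)=0$ from \ref{prop:level-set-tangent-lines} (the paper by a linear-growth contradiction on $\partial_{v}W$, you by noting a bounded monotone function with convergent derivative forces that derivative to $0$). But where the paper then differentiates $M_u^{(\infty)}(v)$ under the limit via Dini's theorem plus Moore--Osgood (a step that tacitly needs continuity of the limit function for Dini), your Tonelli computation --- $\int_{\underline{u}}^{\infty}\bigl(\partial_{u}W(u,v_{2})-\partial_{u}W(u,v_{1})\bigr)\,du\le B'(v_{2}-v_{1})<\infty$, so the nonnegative integrand, which converges to $M_u^{(\infty)}(v_{2})-M_u^{(\infty)}(v_{1})$, must tend to $0$ --- avoids uniform convergence entirely and doesn't even use the preliminary $\lim\partial_{uv}W=0$. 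You also explicitly prove the second assertion $M_u^{(\infty)}>M_u^{(L)}$ (via strict monotonicity at a maximizer of the compact segment), which the paper leaves implicit. With the global reading of \ref{prop:finite-or-infinite-limits}, your finite-case argument together with the trivial all-infinite case constitutes a complete and correct proof.
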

\egroup{}\noindent The proof of Corollary~\ref{cor:secondary-properties-2}
is given in Appendix \ref{app-sec:proofOfC1andC2}.

In the remainder of this paper, we will reference the constants $K$,
$\underline{u}$, and $\underline{v}$ given in Corollary~\ref{cor:secondary-properties-1}.
These constants only depend on the LLF. We will also reference the
constants $M_{u}^{(L)}$, $M_{v}^{(L)}$, $M^{(L)}$, $R_{u,L}$ and
$R_{v,L}$ given in (\ref{eq:M}), (\ref{eq:Mu-Mv-infinity}), and
(\ref{eq:RuL-RvL}). These constants depend on both the LLF and the
specified value of $L$. 

\subsection{Difference operator notation}

Let $\mathbf{w}=(w_{1},w_{2},...,w_{n})^{T}$ be an arbitrary vector
of length $n$. Define $\Delta_{i}^{+}$ and $\Delta_{i}^{-}$ as
the forward and backward difference operator, respectively, where
\begin{align*}
\Delta_{i}^{+}\mathbf{\mathbf{\mathbf{w}}} & :=w_{i+1}-w_{i}\text{ for }i=1,2,...,n-1\\
\Delta_{i}^{-}\mathbf{w} & :=w_{i}-w_{i-1}\text{ for }i=2,3,...,n.
\end{align*}
The $i$th element of the centered finite difference matrix, given
by (\ref{eq:D}), acting on a vector $\mathbf{w}$ is given as
\begin{equation}
(D\mathbf{w})_{i}=\begin{cases}
\Delta_{1}^{+}\mathbf{w}, & i=1\\
\Delta_{i}^{+}\mathbf{w}-\Delta_{i}^{-}\mathbf{w}, & i=2,3,...,n-1\\
-\Delta_{n}^{-}\mathbf{w}, & i=n.
\end{cases}\label{eq:Dw-i}
\end{equation}
Notice that for $i=1,n$ there is only a single term because we are
assuming homogeneous Neumann boundary conditions. We will apply these
difference operators to the species concentration vector, the LLF,
and the partial derivatives of the LLF. For example, $\Delta_{i}^{+}\mathbf{W}=(W)_{i+1}-(W)_{i}$.

\subsection{LLF $\Omega_{K}$ Region\label{subsec:LLF-omegaK-region}}

To prove the main result, we will consider the following region of
phase space (see Figure~\ref{fig:notation-MK-BK} for example):
\begin{figure}
\centering\includegraphics[scale=0.85]{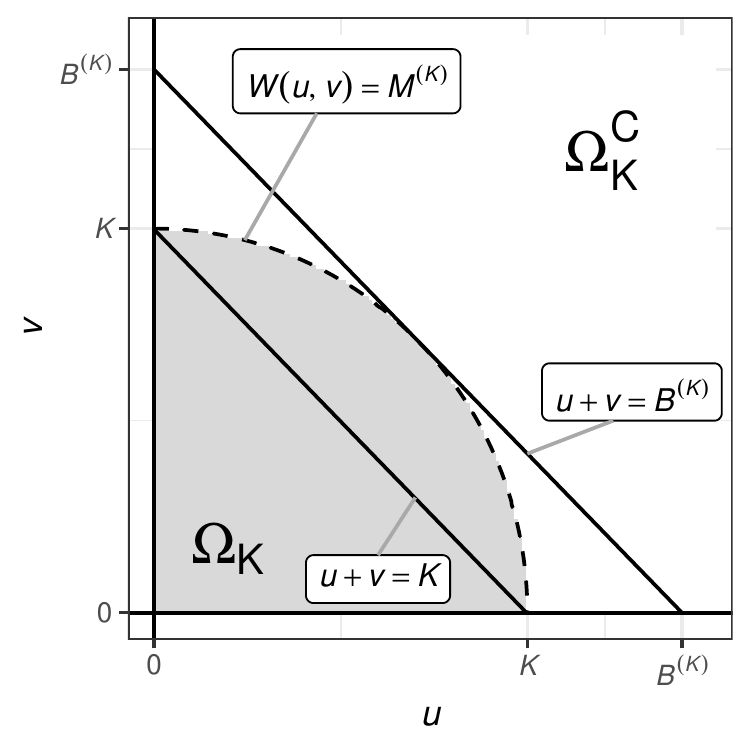} \caption{Illustration of example LLF $\Omega_{K}$ region and relevant constants.
The dashed line represents the level set $W(u,v)=M^{(K)}$ and the
shaded area is the $\Omega_{K}$ region.}
\label{fig:notation-MK-BK} 
\end{figure}
\[
\Omega_{K}:=\{(u,v)\in\mathbb{R}_{\ge0}^{2}\mid W(u,v)<M^{(K)}\}.
\]
Due to \ref{secondary-prop:K-exists} we know that $\Omega_{K}$ contains
all points within the bounded region defined by the level set $W(u,v)=M^{(K)}$.
Additionally, by \ref{prop:decreases-with-time} we know that outside
of this region, i.e. in $\Omega_{K}^{C}=\mathbb{R}_{\ge0}^{2}\setminus\Omega_{K}$,
(\ref{eq:decreases-with-time}) is satisfied. We will define the boundary
of $\Omega_{K}$ as 
\[
\partial\Omega_{K}:=\{(u,v)\in\mathbb{R}_{\ge0}^{2}\mid W(u,v)=M^{(K)}\}
\]
where by definition $\Omega_{K}\cap\partial\Omega_{K}=\emptyset$.
Define
\begin{equation}
B^{(K)}:=\max_{(u,v)\in\partial\Omega_{K}}\|(u,v)\|.\label{eq:BK}
\end{equation}
We then know that for any compartment $i$ such that $(u_{i},v_{i})\in\Omega_{K}$,
the total species concentration is bounded by $B^{(K)}$ (i.e., $\|(u_{i},v_{i})\|<B^{(K)}$).

\subsection{Notation for the sum of Lyapunov-like functions}

To consider the sum of LLFs across the spatial compartments, we will
partition the set of compartments into those that are and are not
contained in $\Omega_{K}$. Let the set $Y$ contain the indices of
compartments that are in $\Omega_{K}$ and $Y^{C}$ contain the indices
of compartments that are in $\Omega_{K}^{C}$ (i.e., $Y:=\{i\mid(u_{i},v_{i})\in\Omega_{K}\}$
and $Y^{C}=\mathcal{N}\setminus Y$ where recall that $\mathcal{N}=\{1,2,..,n\}$).
Figure \ref{fig:notation-Y-Z}a,c gives an example of this notation
(note that the sets $Z_{bdy}$ and $Z_{int}$ will be defined later
in this section). For notational simplicity we will say that a compartment
whose index is in $Y$ is a \emph{$Y$-compartment}, and similarly,
a compartment whose index is in $Y^{C}$ is a \emph{$Y^{C}$-compartment}. 

\begin{figure}[!t]
\begin{minipage}[b][1\totalheight][t]{0.45\columnwidth}%
\subfloat[Compartment location in phase space]{\includegraphics[scale=0.8]{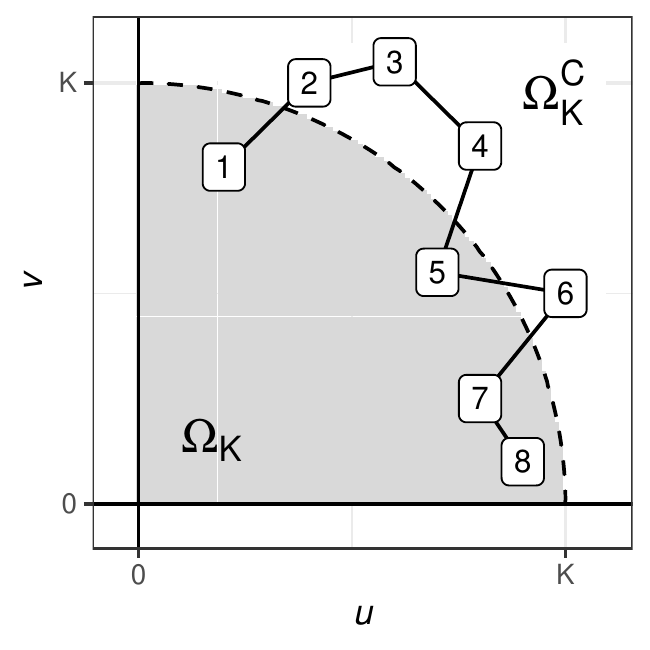}}%
\end{minipage}%
\begin{minipage}[b][1\totalheight][t]{0.45\columnwidth}%
\subfloat[Compartment location in physical space]{%
\noindent\begin{minipage}[t]{1\columnwidth}%
\hspace{.1cm}
\begin{tikzpicture}[scale=.68, every node/.style={scale=0.7}]
	\fill[customgray] (0,0) -- (1,0) -- (1,1) -- (0,1) -- (0,0);
	\fill[customgray] (4,0) -- (5,0) -- (5,1) -- (4,1) -- (4,0);
	\fill[customgray] (6,0) -- (7,0) -- (7,1) -- (6,1) -- (6,0);
	\fill[customgray] (7,0) -- (8,0) -- (8,1) -- (7,1) -- (7,0);
	\draw (0,0) -- (8,0) -- (8,1) -- (0,1) -- (0,0);
	\draw (1,0) -- (1,1);
	\draw (2,0) -- (2,1);
	\draw (3,0) -- (3,1);
	\draw (4,0) -- (4,1);
	\draw (5,0) -- (5,1);
	\draw (6,0) -- (6,1);
	\draw (7,0) -- (7,1);
	\node at (.5,.5){1};
	\node at (1,1.5){1};
	\draw (1,1.5) circle (.2);
	\draw [->] (1,1.3) -- (1,1.1);
	\node at (1.5,.5){2};
	\node at (2,1.5){2};
	\draw (2,1.5) circle (.2);
	\draw [->] (2,1.3) -- (2,1.1);
	\node at (2.5,.5){3};
	\node at (3,1.5){3};
	\draw (3,1.5) circle (.2);
	\draw [->] (3,1.3) -- (3,1.1);
	\node at (3.5,.5){4};	
	\node at (4,1.5){4};
	\draw (4,1.5) circle (.2);
	\draw [->] (4,1.3) -- (4,1.1);
	\node at (4.5,.5){5};	
	\node at (5,1.5){5};
	\draw (5,1.5) circle (.2);
	\draw [->] (5,1.3) -- (5,1.1);
	\node at (5.5,.5){6};	
	\node at (6,1.5){6};
	\draw [->] (6,1.3) -- (6,1.1);
	\node at (6.5,.5){7};	
	\node at (7,1.5){7};
	\draw (6,1.5) circle (.2);
	\draw [->] (7,1.3) -- (7,1.1);
	\node at (7.5,.5){8};
	\draw (7,1.5) circle (.2);
	\node at (3.5, -.5){$x \longrightarrow$};
\end{tikzpicture}%
\end{minipage}}

\subfloat[Set of compartment/edge indices]{%
\begin{minipage}[t]{0.95\columnwidth}%
\vspace{.3cm}
\small
\begin{align*}
Y & =\{1,5,7,8\}\\
Y^{C} & =\{2,3,4,6\}\\
Z_{bdy} & =\{1,4,5,6\}\\
Z_{int} & =\{2,3\}
\end{align*}
\vspace{-.4cm}%
\end{minipage}

}%
\end{minipage}

\caption{Example notation for an 8 compartment system. In (b) $Y^{C}$-compartments
are shaded in gray and the 7 edges are labeled by circles. There are
four edges that separate a $Y$ from a $Y^{C}$-compartment (i.e.,
$|Z_{bdy}|=4$) and two edges that separate two $Y^{C}$ compartments
(i.e., $|Z_{int}|=2$).}
\label{fig:notation-Y-Z} 
\end{figure}

Suppose $X$ is either $Y$, $Y^{C}$, or $\mathcal{N}$ and define
the \emph{sum of LLFs over indices in $X$} as the function $W_{X}:\mathbb{R}^{n}\times\mathbb{R}^{n}\rightarrow\mathbb{R}$
where
\[
W_{X}(\mathbf{u},\mathbf{v}):=\sum_{i\in X}(W)_{i}.
\]
When $X:=\mathcal{N}$ this equation will be referred to as the \emph{System
LLF. }Notice that $W_{\mathcal{N}}=W_{Y}+W_{Y^{C}}\le nM^{(K)}+W_{Y^{C}}$\emph{.
}This implies that in order to bound $W_{\mathcal{N}}$, we need to
find a bound on $W_{Y^{C}}$\emph{.}

\subsection{The LLF Evolution Equation}

To bound $W_{Y^{C}}$, we will consider how its value evolves with
time. Note that this evolution is piecewise continuous where the discontinuities
occur when the membership of $Y^{C}$ changes. We will refer to such
an event as a \textit{crossing} of $\partial\Omega_{K}$. Compartment
$i$ will have undergone a crossing at time $t$ if $i\in Y^{C}(t)$
and $i\in Y(t\pm\epsilon)$ for an arbitrary small $\epsilon>0$.
If $i\in Y(t+\epsilon)$ then a crossing from $\Omega_{K}^{C}$ into
$\Omega_{K}$ has occurred and if $i\in Y(t-\epsilon)$, a crossing
from $\Omega_{K}$ into $\Omega_{K}^{C}$ has occurred. 

In the notation that follows, we consider time periods when there
are no crossings of $\partial\Omega_{K}$.We have that $W_{Y^{C}}$
is continuously differentiable and, therefore, we can define the \emph{LLF
Evolution Equation }as 
\[
\frac{dW_{Y^{C}}}{dt}=\left(\nabla_{\mathbf{u}}W_{Y^{C}}\right)\left(\frac{d\mathbf{u}}{dt}\right)^{T}+\left(\nabla_{\mathbf{v}}W_{Y^{C}}\right)\left(\frac{d\mathbf{v}}{dt}\right)^{T}
\]
where $\nabla_{\mathbf{u}}W_{Y^{C}}=(\partial_{u_{1}}W_{Y^{C}},\partial_{u_{2}}W_{Y^{C}},...,\partial_{u_{n}}W_{Y^{C}})$
and $\nabla_{\mathbf{v}}W_{Y^{C}}$ is defined analogously. The LLF
Evolution Equation tells us how the sum of LLFs over indices in $Y^{C}$
changes along solution trajectories.

We can divide the LLF evolution equation into reactive and diffusive
flux contributions, i.e.,

\[
\frac{dW_{Y^{C}}}{dt}=\gamma W_{Y^{C},R}+W_{Y^{C},D}
\]
where 
\begin{align}
W_{Y^{C},R} & :=\left(\nabla_{\mathbf{u}}W_{Y^{C}}\right)\mathbf{f}(\mathbf{u},\mathbf{v})^{T}+\left(\nabla_{\mathbf{v}}W_{Y^{C}}\right)\mathbf{g}(\mathbf{u},\mathbf{v})^{T}\label{eq:W-YCR}\\
W_{Y^{C},D} & :=\left(\nabla_{\mathbf{u}}W_{Y^{C}}\right)D\mathbf{u}^{T}+\left(\nabla_{\mathbf{v}}W_{Y^{C}}\right)dD\mathbf{v}^{T}\label{eq:W-YCD}
\end{align}
represent the reactive and diffusive contribution, respectively. 

\subsection{The diffusive fluxes and compartment edges}

We next examine the diffusive contribution to the LLF evolution equation,
(\ref{eq:W-YCD}), by considering a more refined set of fluxes. We
define an \emph{edge }as the boundary between two adjacent compartments
(see Figure \ref{fig:notation-Y-Z}b). Let $i=1,2,...,n-1$ denote
an edge where the $i$th edge represents the boundary between the
$i$ and $i+1$ compartment. We will call an edge that connects a
$Y$ with a $Y^{C}$-compartment a \emph{boundary edge }and an edge
that connects two $Y^{C}$-compartments an \emph{interior edge}. Note
that we do not name the edges that connect two $Y$-compartments. 

For each edge $i$ define the following

\begin{equation}
n_{i}=\mathbf{1}_{Y}(i+1)-\mathbf{1}_{Y}(i)\label{eq:ni}
\end{equation}
where $\mathbf{1}$ is the indicator function. We then have that the
sets

\begin{equation}
\begin{aligned}Z_{bdy:} & :=\left\{ i\in\mathcal{N}\setminus\{n\}\mid\left|n_{i}\right|=1\right\} \\
Z_{int} & :=\left\{ i\in\mathcal{N}\setminus\{n\}\mid n_{i}=0\text{ and }i\in Y^{C}\right\} 
\end{aligned}
\label{eq:Z-sets}
\end{equation}
contain the boundary edge indices and the interior edge indices, respectively.
The maximum sizes of $Z_{bdy}$ and $Z_{int}$ are both $n-1$. Figure
\ref{fig:notation-Y-Z}c defines $Z_{bdy}$ and $Z_{int}$ for an
example system.

With this notation in mind, let's again consider the diffusive contribution
to the LLF Evolution Equation and rewrite (\ref{eq:W-YCD}) as a sum
\begin{equation}
W_{Y^{C},D}=\sum_{i\in Y^{C}}\left(\partial_{u}W\right)_{i}\left(D\mathbf{u}\right)_{i}+d\left(\partial_{v}W\right)_{i}\left(D\mathbf{v}\right)_{i}.\label{eq:W-YCD-sum}
\end{equation}
Recall that $(D\mathbf{u})_{i}$ and $(D\mathbf{v})_{i}$ can be rewritten
as shown in (\ref{eq:Dw-i}) and rewrite (\ref{eq:W-YCD-sum}) as
\begin{align}
W_{Y^{C},D} & =\sum_{i\in Z_{bdy}}F_{bdy,i}+\sum_{i\in Z_{int}}F_{int,i}\label{eq:W-YCD-fluxes}
\end{align}
where
\begin{align}
F_{bdy,i} & :=n_{i}\left((\partial_{u}W)_{i+\frac{1-n_{i}}{2}}\Delta_{i}^{+}\mathbf{u}+d(\partial_{v}W)_{i+\frac{1-n_{i}}{2}}\Delta_{i}^{+}\mathbf{v}\right)\label{eq:Fbdyi}\\
F_{int,i} & :=-\left(\Delta_{i}^{+}\left(\partial_{u}\mathbf{W}\right)\Delta_{i}^{+}\mathbf{u}+d\Delta_{i}^{+}\left(\partial_{v}\mathbf{W}\right)\Delta_{i}^{+}\mathbf{v}\right).\label{eq:Finti}
\end{align}
We will refer to $F_{bdy,i}$ and $F_{int,i}$ as \emph{flux-effect
terms} because each represents the contribution of a single diffusive
flux to the LLF Evolution Equation. In Section~\ref{subsec:proofs-1}
we will show that, under certain conditions, the diffusive contribution
to the LLF Evolution Equation is negative.

\section{Boundedness theorems\label{sec:boundedness-theorems}}

We will prove the discretized RD system given by (\ref{eq:sysDisc})
is bounded if there exists a LLF for the reactions as described by
Definition \ref{def:LLF}. The proof involves two main steps. In Section~\ref{subsec:proofs-1},
we consider a snapshot of the system and show that if any compartment
exceeds a threshold total species concentration, then the solution
to the LLF Evolution Equation is decreasing. In Section~\ref{subsec:proofs-2},
we consider the evolution of the system, and show that the System
LLF is bounded. This bound on the System LLF in turn leads to a bound
on the concentration of species within a single compartment.

\subsection{At large species concentration the solution to the LLF Evolution
Equation is nonincreasing\label{subsec:proofs-1}}

Recall that the LLF Evolution Equation can be broken down into the
reactive (\ref{eq:W-YCR}) and diffusive (\ref{eq:W-YCD}) components.
We know that the reactive component is nonpositive due to \ref{prop:decreases-with-time}.
Thus, the main work of this section is to show that the diffusive
component is nonpositive when a compartment exceeds a threshold species
concentration. 

The outline of the results in this section is given as follows. Recall
that the diffusive component of the LLF Evolution Equation can be
rewritten as a summation of flux-effect terms, as given by (\ref{eq:W-YCD-fluxes}).
Each of these flux-effect terms can be bounded from above by a constant
(see Lemma~\ref{lemma:Fbdyi-bounded}). Therefore, the diffusive
component is negative if there exists one negative flux-effect term
with a sufficiently large magnitude. This negative flux-effect term
exists if two adjacent compartments have a large enough difference
between the amount of species they contain (see Lemma \ref{lemma:int-flux-effect-term-bbd-u}
and Corollary \ref{cor:int-flux-effect-term-bbd-v},~\ref{cor:int-flux-effect-term-bbd},
and~\ref{cor:bdy-flux-effect-term-bbd}). This difference in species
concentration is obtained if there is at least one $Y$-compartment
and one $Y^{C}$-compartment that exceeds a threshold species concentration
(Lemma~\ref{lemma:adj-comps-exist} and~\ref{lemma:diffusion-component-nonincreasing}).
It then immediately follows that the solution to the entire LLF Evolution
Equation is nonincreasing (Corollary~\ref{cor:solution-to-LLF-equation-nonincreasing}).

With this road-map in mind, we first show that the flux-effect terms
$F_{bdy,i}$ and $F_{int,i}$ have an upper bound. By \ref{prop:convex}
we immediately know that $F_{int,i}<0$ for all $i\in Z_{int}$. In
the next lemma, we prove that $F_{bdy,i}$ has an upper bound as well.
\begin{lemma}
\label{lemma:Fbdyi-bounded} Let $W$ by a LLF for the system given
by (\ref{eq:sysDisc}). Suppose at an arbitrary time $t>0$, $Z_{bdy}$
is nonempty. Pick $i\in Z_{bdy}$ and let $F_{bdy,i}$ be as given
in (\ref{eq:Fbdyi}). Then there exists a constant $F_{max}$ such
that $F_{bdy,i}\le F_{max}$. 
\end{lemma}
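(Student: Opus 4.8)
The plan is to collapse the two possible orientations of a boundary edge into a single expression, and then bound that expression above by a case analysis on where the $Y^{C}$-compartment of the edge sits in phase space. First I would unify the cases $n_{i}=\pm1$. Writing $(a,b)$ for the concentrations in the $Y^{C}$-compartment of edge $i$ and $(p,q)$ for those in its $Y$-compartment, a direct check of the index $i+\tfrac{1-n_{i}}{2}$ in (\ref{eq:Fbdyi}) shows that in both orientations
\[
F_{bdy,i}=\partial_{u}W(a,b)\,(p-a)+d\,\partial_{v}W(a,b)\,(q-b).
\]
Because the $Y$-compartment lies in $\Omega_{K}$, the definition (\ref{eq:BK}) gives $0\le p,q<B^{(K)}$, while the $Y^{C}$-compartment satisfies $W(a,b)\ge M^{(K)}$. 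Recalling from \ref{secondary-prop:K-exists} that $B^{(K)}\ge K\ge\max\{\underline{u},\underline{v}\}$, the sign information in \ref{secondary-prop:u-v-underbar-exist} is available whenever a coordinate of $(a,b)$ reaches $B^{(K)}$.

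I would then split into cases on the sizes of $a$ and $b$ relative to $B^{(K)}$. If $a\ge B^{(K)}$ and $b\ge B^{(K)}$, both partials are positive by \ref{secondary-prop:u-v-underbar-exist} while $p-a<0$ and $q-b<0$, so $F_{bdy,i}<0$. If $a<B^{(K)}$ and $b<B^{(K)}$, the point $(a,b)$ lies in a bounded set whose closure is compact; the partials of $W$ are continuous and $(p,q)$ is bounded, so $F_{bdy,i}$ is bounded above. The substance of the lemma is the two mixed cases, say $a\ge B^{(K)}$, $b<B^{(K)}$ (the other being symmetric). There $\partial_{u}W(a,b)>0$ and $p-a<0$, so the $u$-contribution is negative, and the work is to control $d\,\partial_{v}W(a,b)(q-b)$ when $b$ is bounded but $a$ may be arbitrarily large.

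The main obstacle is the sub-case $q>b$ with $\partial_{v}W(a,b)>0$, where $(q-b)$ is positive and $\partial_{v}W(a,b)$ can grow. To control it I would invoke \ref{prop:level-set-tangent-lines}: for $a\ge\underline{u}$ and $b<B^{(K)}$ the constant $R_{v,B^{(K)}}$ of (\ref{eq:RuL-RvL}) gives $\partial_{v}W(a,b)\le R_{v,B^{(K)}}\,\partial_{u}W(a,b)$, and as $a\to\infty$ the right-hand side tends to $R_{v,B^{(K)}}M_{u}^{(\infty)}$ with $M_{u}^{(\infty)}$ the constant from \ref{secondary-prop:Mu-infinity-constant}. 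If $M_{u}^{(\infty)}$ is finite this bounds $\partial_{v}W(a,b)$, hence the $v$-term, by a constant. If $M_{u}^{(\infty)}$ is infinite I would not bound the $v$-term in isolation but absorb it into the strongly negative $u$-flux: using the same ratio bound,
\[
F_{bdy,i}\le \partial_{u}W(a,b)\,\bigl[(p-a)+d\,R_{v,B^{(K)}}(q-b)\bigr],
\]
where the bracket is at most $B^{(K)}(1+d\,R_{v,B^{(K)}})-a$ and so becomes non-positive once $a$ exceeds the explicit threshold $a^{\ast}:=B^{(K)}(1+d\,R_{v,B^{(K)}})$; for $B^{(K)}\le a<a^{\ast}$ the pair $(a,b)$ again lies in a compact set and $F_{bdy,i}$ is bounded by continuity.

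The remaining sub-cases are routine: if $q\le b$ the $v$-term is nonpositive when $\partial_{v}W(a,b)\ge0$, and if $\partial_{v}W(a,b)<0$ then \ref{secondary-prop:u-v-underbar-exist} forces $b<\underline{v}$, so by the monotonicity of the partials in \ref{prop:convex} the quantity $|\partial_{v}W(a,b)|$ is bounded over the relevant compact range and the term is controlled by $B^{(K)}$. Taking $F_{max}$ to be the largest of the finitely many constants produced across the cases completes the argument. The conceptual difficulty throughout is that neither flux term can be bounded on its own when a single coordinate of the $Y^{C}$-compartment runs to infinity; it is precisely the monotonicity of the partial derivatives from \ref{prop:convex}, the slope control of \ref{prop:level-set-tangent-lines}, and the dichotomy of \ref{prop:finite-or-infinite-limits} together with \ref{secondary-prop:Mu-infinity-constant}--\ref{secondary-prop:Mv-infinity-constant} that let the negative $u$-flux absorb a possibly unbounded $v$-derivative.
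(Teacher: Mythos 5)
Your proof is correct and follows essentially the same route as the paper's: the same decomposition of $F_{bdy,i}$ into $u$- and $v$-flux contributions, the same use of the ratio constants from \ref{prop:level-set-tangent-lines} (via (\ref{eq:RuL-RvL})) to absorb the potentially unbounded partial derivative into the strongly negative flux---your threshold $a^{\ast}=B^{(K)}\bigl(1+dR_{v,B^{(K)}}\bigr)$ is exactly the paper's $u^{*}$---and the same sign/monotonicity bounds from \ref{prop:convex} and \ref{secondary-prop:u-v-underbar-exist} together with compactness in the bounded regimes. The only differences are cosmetic: you organize the cases by the coordinates of the $Y^{C}$-compartment rather than by the sign of $\Delta_{i}^{+}\mathbf{u}$ and the size of $v_{i}$, and your detour through \ref{prop:finite-or-infinite-limits} and $M_{u}^{(\infty)}$ is unnecessary (the paper's proof never invokes it), as your own absorption argument already covers both the finite and infinite cases uniformly.
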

\begin{proof}
Pick $i\in Z_{bdy}$ and recall $n_{i}$ is given by (\ref{eq:ni}).
We rewrite $F_{bdy,i}$ as follows:
\[
F_{bdy,i}=n_{i}\left(F_{bdy,i}^{(u)}+F_{bdy,i}^{(v)}\right)
\]
where
\[
\begin{aligned}F_{bdy,i}^{(u)} & :=(\partial_{u}W)_{i+\frac{1-n_{i}}{2}}\Delta_{i}^{+}\mathbf{u},\hspace{1em}\hspace{1em}\hspace{1em} & F_{bdy,i}^{(v)} & :=d(\partial_{v}W)_{i+\frac{1-n_{i}}{2}}\Delta_{i}^{+}\mathbf{v}.\end{aligned}
\]
We will assume $n_{i}=1$ and note analogous logic can be applied
if $n_{i}=-1$. Since $n_{i}=1$, $i+1\in Y$ and therefore
\[
u_{i+1},v_{i+1},\Delta_{i}^{+}\mathbf{u},\Delta_{i}^{+}\mathbf{v}\le B^{(K)}
\]
where $B^{(K)}$ is given by (\ref{eq:BK}).

For notational simplicity we define the following two constants:
\begin{align*}
u^{*} & :=B^{(K)}\left(1+dR_{v,B^{(K)}}\right),\hspace{1em}\hspace{1em} & v^{*} & :=\frac{1}{d}B^{(K)}\left(d+R_{u,B^{(K)}}\right)
\end{align*}
where $R_{v,B^{(K)}}$ and $R_{u,B^{(K)}}$ are given by (\ref{eq:RuL-RvL}).
By \ref{secondary-prop:K-exists}, $B^{(K)}>\underline{u},\underline{v}$,
and thus $u^{*}\ge\underline{u}$ and $v^{*}\ge\underline{v}$.

We next consider three possible cases, one of which must occur. In
the first case we bound $F_{bdy,i}$ directly and in the second two
cases we bound $F_{bdy,i}^{(u)}$. First suppose $~\Delta_{i}^{+}\mathbf{u}\ge0$~and~$v_{i}>v^{*}$.
Using (\ref{eq:RuL-RvL}) and \ref{secondary-prop:u-v-underbar-exist},
we have that
\[
\begin{aligned}F_{bdy,i} & =(\partial_{v}W)_{i}\left(\frac{\left(\partial_{u}W\right)_{i}}{\left(\partial_{v}W\right)_{i}}\Delta_{i}^{+}\mathbf{u}+d\Delta_{i}^{+}\mathbf{v}\right)\\
 & \le d(\partial_{v}W)_{i}\left(B^{(K)}\left(R_{u,B^{(K)}}+d\right)-dv^{*}\right)\\
 & \le0
\end{aligned}
\]
Second, suppose $\Delta_{i}^{+}\mathbf{u}\ge0$~and~$v_{i}\le$$v^{*}$.
Using that $u_{i}<B^{(K)}$, we have that
\begin{align*}
F_{bdy,i}^{(u)} & =(\partial_{u}W)_{i}\Delta_{i}^{+}\mathbf{u}\\
 & \le\left(\max_{u\le B^{(K)},v\le v^{*}}\partial_{u}W(u,v)\right)B^{(K)}.
\end{align*}
Third suppose $\Delta_{i}^{+}\mathbf{u}<0$. If $(\partial_{u}W)_{i}\le0$
then $u_{i}<\underline{u}$ and by \ref{prop:convex} we have that
\begin{align*}
F_{bdy,i}^{(u)} & =\left|(\partial_{u}W)_{i}\right|\left|\Delta_{i}^{+}\mathbf{u}\right|\\
 & \le\text{\ensuremath{\left|\ensuremath{\partial_{u}W(0,0)}\right|}}\underline{u}.
\end{align*}
If instead $(\partial_{u}W)_{i}>0$, then $F_{bdy,i}^{(u)}<0$, and
the given bound still holds. 

These three cases imply that either
\[
F_{bdy,i}\le0\quad\quad\text{ or \ensuremath{\quad\quad}}F_{bdy,i}^{(u)}\le B^{(K)}\max_{u\le B^{(K)},v\le v^{*}}\left|\partial_{u}W(u,v)\right|.
\]
We can analogously show that either $F_{bdy,i}\le0$ or $F_{bdy,i}^{(v)}$
is bounded from above by a constant. This leads to the final result
that $F_{bdy,i}\le F_{max}$ where
\begin{equation}
F_{max}:=B^{(K)}\left(\max_{u\le B^{(K)},v\le v^{*}}\left|\partial_{u}W(u,v)\right|+d\max_{u\le u^{*},v\le B^{(K)}}\left|\partial_{v}W(u,v)\right|\right).\label{eq:Fmax}
\end{equation}
\end{proof}

Our next goal is to show that under certain conditions one of the
flux-effect terms that contributes to the LLF Evolution Equation,
i.e., $F_{bdy,i}$ or $F_{int,i}$, is smaller than an arbitrary negative
constant. We first pick an interior edge $\ell$ and show that the
desired result is obtained when $\left|\Delta_{\ell}^{+}\mathbf{u}\right|$
or $\left|\Delta_{\ell}^{+}\mathbf{v}\right|$ is sufficiently large
(Lemma \ref{lemma:int-flux-effect-term-bbd-u} and Corollary \ref{cor:int-flux-effect-term-bbd-v}).
Furthermore, if there is a large enough difference in the total species
concentration (i.e. $\left|\Delta_{\ell}^{+}(\textbf{\ensuremath{\mathbf{u}+\mathbf{v})}}\right|$)
the desired result is obtained (Corollary \ref{cor:int-flux-effect-term-bbd})
and similar results follow for an arbitrary boundary edge (Corollary
\ref{cor:bdy-flux-effect-term-bbd}). Below we will refer to compartment
$\ell+1$ as compartment $\ell^{+}$.
\begin{lemma}
\label{lemma:int-flux-effect-term-bbd-u} Let $W$ be a LLF for the
system given by (\ref{eq:sysDisc}). Pick an arbitrary time $t>0$
and suppose that $\ell\in Z_{int}$. Pick $A>0$ and $L>0$, where
\[
\min\left\{ \|(u_{\ell},v_{\ell})\|,\|(u_{\ell^{+}},v_{\ell^{+}})\|\right\} \le L.
\]
There exists a constant $G_{u}$ such that, if $\left|\Delta_{\ell}^{+}\mathbf{u}\right|\ge G_{u}$,
then the flux-effect term for interior edge $\ell$ is bounded from
above by $-A$, i.e. $F_{int,\ell}\le-A$ where $F_{int,\ell}$ is
given by (\ref{eq:Finti}) with $i:=\ell$.
\end{lemma}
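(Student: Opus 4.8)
The plan is to use the hypothesis that one of the two endpoints of edge $\ell$ carries bounded total concentration: a large $|\Delta_\ell^+\mathbf u|$ then forces the $u$-value in the \emph{other} compartment to be large, and the convexity of $W$ makes the dominant part of $F_{int,\ell}$ grow. First I would reduce to a convenient labelling. The products in (\ref{eq:Finti}) are unchanged if compartments $\ell$ and $\ell^+$ are swapped (each forward difference changes sign, but $F_{int,\ell}$ is built from products of two such differences), and $|\Delta_\ell^+\mathbf u|$ is likewise symmetric; so without loss of generality $\|(u_\ell,v_\ell)\|\le L$, hence $u_\ell\le L$ and $v_\ell\le L$. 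Writing $a:=\Delta_\ell^+\mathbf u$ and using $u_{\ell^+}\ge 0$, a value $|a|>L$ is possible only when $a>0$, in which case $u_{\ell^+}=u_\ell+a\ge a$ is large. Thus taking $G_u$ large makes $a>0$ and $u_{\ell^+}$ as large as needed.

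Next I would split $-F_{int,\ell}=T_u+T_v$ with
\[
T_u:=\Delta_\ell^+(\partial_u\mathbf W)\,\Delta_\ell^+\mathbf u,\qquad T_v:=d\,\Delta_\ell^+(\partial_v\mathbf W)\,\Delta_\ell^+\mathbf v,
\]
and show $T_u+T_v\ge A$. By \ref{prop:convex}, $\partial_u W$ is nondecreasing in each variable, so $(\partial_u W)_\ell\le M_u^{(L)}$ and $(\partial_u W)_{\ell^+}\ge\partial_u W(u_{\ell^+},0)$, the latter increasing to $M_u^{(\infty)}$ as $u_{\ell^+}\to\infty$; hence $T_u\ge a\big((\partial_u W)_{\ell^+}-M_u^{(L)}\big)$. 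The term $T_v$ is the crux, since its sign is ambiguous. Setting $b:=\Delta_\ell^+\mathbf v$: if $b\ge 0$ then both factors of $T_v$ share the sign dictated by \ref{prop:convex}, so $T_v\ge 0$; if $b<0$ then $v_{\ell^+}<v_\ell\le L$, so $|b|\le L$ is \emph{bounded}, and I would bound $(\partial_v W)_{\ell^+}\le R_{v,L}\,(\partial_u W)_{\ell^+}$ by the ratio estimate of \ref{prop:level-set-tangent-lines} (the constant $R_{v,L}$ of (\ref{eq:RuL-RvL}), applicable because $v_{\ell^+}\le L$ and, for large $a$, $u_{\ell^+}\ge\underline u$), together with $(\partial_v W)_\ell\ge\partial_v W(0,0)$, to obtain $T_v\ge -dL\,R_{v,L}\,(\partial_u W)_{\ell^+}+dL\,\partial_v W(0,0)$.

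With these two estimates in hand I would conclude by the finite/infinite dichotomy for $M_u^{(\infty)}$ (which holds by \ref{prop:convex} and is recorded, together with $M_u^{(\infty)}>M_u^{(L)}$, in \ref{secondary-prop:Mu-infinity-constant}). If $M_u^{(\infty)}$ is finite, then for $u_{\ell^+}$ large enough $(\partial_u W)_{\ell^+}-M_u^{(L)}\ge\delta:=\tfrac12\big(M_u^{(\infty)}-M_u^{(L)}\big)>0$, so $T_u\ge a\delta$, while $(\partial_u W)_{\ell^+}\le M_u^{(\infty)}$ bounds $T_v$ below by a constant; hence $T_u+T_v\ge a\delta-\mathrm{const}$ grows linearly in $a$. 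If $M_u^{(\infty)}$ is infinite, then $(\partial_u W)_{\ell^+}\to\infty$, so for $u_{\ell^+}$ large $(\partial_u W)_{\ell^+}-M_u^{(L)}\ge\tfrac12(\partial_u W)_{\ell^+}$ and $T_u\ge\tfrac a2(\partial_u W)_{\ell^+}$; then choosing $a\ge 2\big(1+dL\,R_{v,L}\big)$ makes $T_u+T_v\ge(\partial_u W)_{\ell^+}+dL\,\partial_v W(0,0)\to\infty$. In either case I would set $G_u$ to be the largest of the finitely many thresholds used — those enforcing $a>0$, $u_{\ell^+}\ge\underline u$, the lower bounds on $(\partial_u W)_{\ell^+}$, and the displayed linear bounds on $a$ — so that $T_u+T_v\ge A$, i.e. $F_{int,\ell}\le -A$. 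The main obstacle is controlling $T_v$: it is the only term that can be negative, and the argument hinges on the observation that $T_v<0$ forces $|\Delta_\ell^+\mathbf v|\le L$, together with the ratio bound of \ref{prop:level-set-tangent-lines} which prevents $\partial_v W$ from overwhelming $\partial_u W$ at the high-concentration compartment.
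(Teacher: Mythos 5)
Your proof is correct up to one cosmetic slip, and it is essentially the paper's argument: the same relabelling step (the paper assumes without loss of generality $\|(u_\ell,v_\ell)\|<\|(u_{\ell^+},v_{\ell^+})\|$), the same observation that $|\Delta_\ell^+\mathbf{u}|\ge G_u>L$ forces $\Delta_\ell^+\mathbf{u}>0$ and $u_{\ell^+}$ large, the same bound $(\partial_u W)_\ell\le M_u^{(L)}$ from \ref{prop:convex}, and the same treatment of the cross term --- nonnegative when $\Delta_\ell^+\mathbf{v}\ge0$, and otherwise $|\Delta_\ell^+\mathbf{v}|\le L$ with $(\partial_v W)_{\ell^+}$ controlled by $R_{v,L}(\partial_u W)_{\ell^+}$ via \ref{prop:level-set-tangent-lines} and (\ref{eq:RuL-RvL}). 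The only structural difference is the endgame: you split on whether $M_u^{(\infty)}$ is finite or infinite (using \ref{secondary-prop:Mu-infinity-constant} both for $M_u^{(\infty)}>M_u^{(L)}$ and, in the finite case, for the upper bound $(\partial_u W)_{\ell^+}\le M_u^{(\infty)}$ that keeps $T_v$ bounded below), whereas the paper avoids any case split by fixing $\widetilde{L}>L$ with $M_u^{(\widetilde{L})}>0$ and $\widetilde{u}$ with $\partial_u W(\widetilde{u},0)=M_u^{(\widetilde{L})}$, then taking $G_u\ge\widetilde{u}+L$ so that $(\partial_u W)_{\ell^+}\ge M_u^{(\widetilde{L})}$; factoring out $(\partial_u W)_{\ell^+}$ then yields the single explicit threshold (\ref{eq:Gu}), linear in $A$. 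Both endgames are valid; the paper's buys an explicit formula for $G_u$, yours makes the role of \ref{prop:finite-or-infinite-limits} and \ref{secondary-prop:Mu-infinity-constant} more transparent. One small repair: your stated lower bound $T_v\ge -dL\,R_{v,L}(\partial_u W)_{\ell^+}+dL\,\partial_v W(0,0)$ can fail when $\partial_v W(0,0)>0$ and $|\Delta_\ell^+\mathbf{v}|$ is small, since $d(\partial_v W)_\ell\,|\Delta_\ell^+\mathbf{v}|$ need not reach $dL\,\partial_v W(0,0)$; replacing the additive constant by $dL\min\{0,\,\partial_v W(0,0)\}$ gives an inequality valid in all cases (including $\Delta_\ell^+\mathbf{v}\ge0$, where $T_v\ge0$), and this constant is absorbed harmlessly into your choice of thresholds.
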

\begin{proof}
Pick $\widetilde{L}>L$ such that $M_{u}^{(\widetilde{L})}>0$. By
\ref{secondary-prop:Mu-infinity-constant} this $\widetilde{L}$ exists
and there is a $\widetilde{u}$ such that $\partial_{u}W(\widetilde{u},0)=M_{u}^{(\widetilde{L})}.$
Define the following two constants
\begin{align*}
C_{1} & :=1-\frac{M_{u}^{(L)}}{M_{u}^{(\widetilde{L})}},\quad\quad C_{2}:=\left(R_{v,L}+\frac{\max_{\|(u,v)\|<L}|\partial_{v}W|}{M_{u}^{(\widetilde{L})}}\right)L
\end{align*}
and let
\begin{equation}
G_{u}:=\max\left\{ \widetilde{u}+L,\frac{A+dM_{u}^{(\widetilde{L})}C_{2}}{M_{u}^{(\widetilde{L})}C_{1}}\right\} .\label{eq:Gu}
\end{equation}
Notice that $C_{1},C_{2}>0$. The fact that $C_{1}>0$ follows from
\ref{secondary-prop:Mu-L-Mv-L-increasing}. 

Without loss of generality we will suppose that $\|(u_{\ell},v_{\ell})\|<\|(u_{\ell^{+}},v_{\ell^{+}})\|$
which implies $\|(u_{\ell},v_{\ell})\|<L$. Note that if $-\Delta_{\ell}^{+}\mathbf{u}\ge G_{u}$
then $u_{l}>G_{u}>L$, which is a contradiction. Therefore, $\Delta_{\ell}^{+}\mathbf{u}\ge G_{u}$
and by \ref{secondary-prop:Mu-L-Mv-L-increasing},

\begin{equation}
\max(0,M_{u}^{(L)})<M_{u}^{(\widetilde{L})}\le(\partial_{u}W)_{\ell^{+}}<M_{u}^{(\infty)}.\label{eq:Mu-inequalities}
\end{equation}

The flux effect term $F_{int,\ell}$ can be rewritten as follows:
\[
F_{int,\ell}=-(\partial_{u}W)_{\ell^{+}}\left(\frac{\Delta_{\ell}^{+}(\partial_{u}\mathbf{W})}{(\partial_{u}W)_{\ell^{+}}}\Delta_{\ell}^{+}\mathbf{u}+d\frac{\Delta_{\ell}^{+}(\partial_{v}\mathbf{W})}{(\partial_{u}W)_{\ell^{+}}}\Delta_{\ell}^{+}\mathbf{v}\right).
\]
We will next examine the two terms in the parentheses that contribute
to $F_{int,\ell}$. For the first term, using (\ref{eq:Mu-inequalities})
gives the following bound:
\begin{align*}
\frac{\Delta_{l}^{+}\left(\partial_{u}\mathbf{W}\right)}{(\partial_{u}W)_{\ell^{+}}}\Delta_{\ell}^{+}\mathbf{u} & =\left(1-\frac{(\partial_{u}W)_{\ell}}{(\partial_{u}W)_{\ell^{+}}}\right)\Delta_{\ell}^{+}\mathbf{u}\ge\left(1-\frac{M_{u}^{(L)}}{M_{u}^{(\widetilde{L})}}\right)G_{u}=C_{1}G_{u}
\end{align*}
For the second term, first suppose that $\Delta_{\ell}^{+}\mathbf{v}\le0$.
This implies that $v_{\ell^{+}},\left|\Delta_{\ell}^{+}\mathbf{v}\right|\le L$
and leads to the following bound:
\begin{equation}
\begin{aligned}d\frac{\Delta_{\ell}^{+}\left(\partial_{v}\mathbf{W}\right)}{(\partial_{u}W)_{\ell^{+}}}\Delta_{\ell}^{+}\mathbf{v} & =-d\left(\frac{\left(\partial_{v}W\right)_{\ell^{+}}}{(\partial_{u}W)_{\ell^{+}}}-\frac{\left(\partial_{v}W\right)_{\ell}}{(\partial_{u}W)_{\ell^{+}}}\right)\left|\Delta_{\ell}^{+}\mathbf{v}\right|\\
 & \ge-d\left(R_{v,L}+\frac{\max_{\|(u,v)\|<L}|\partial_{v}W|}{M_{u}^{(\widetilde{L})}}\right)L=-dC_{2}.
\end{aligned}
\label{eq:largeueffect2}
\end{equation}
If instead $\Delta_{\ell}^{+}\mathbf{v}>0$, then the left hand side
of (\ref{eq:largeueffect2}) is positive and, therefore, the bound
still holds. The positivity of the left hand side follows from (\ref{eq:Mu-inequalities}),
which implies $(\partial_{u}W)_{\ell^{+}}>0$ and \ref{prop:convex},
which implies $\Delta_{\ell}^{+}\left(\partial_{v}\mathbf{W}\right)\ge0$.
Finally, using (\ref{eq:Mu-inequalities})--(\ref{eq:largeueffect2})
we have that 
\[
F_{int,\ell}\le-M_{u}^{(\widetilde{L})}\left(C_{1}G_{u}-dC_{2}\right)\le-A.
\]
\end{proof}

\begin{corollary}
\label{cor:int-flux-effect-term-bbd-v} Suppose the assumptions of
Lemma \ref{lemma:int-flux-effect-term-bbd-u} hold. Given $A>0$ and
$L>0$ where 
\[
\min\left\{ \|(u_{\ell},v_{\ell})\|,\|(u_{\ell^{+}},v_{\ell^{+}})\|\right\} \le L,
\]
there exists a $G_{v}\ge L$ such that, if $\left|\Delta_{\ell}^{+}\mathbf{v}\right|\ge G_{v}$,
then the flux-effect term for interior edge $\ell$ is bounded from
above, i.e., $F_{int,\ell}\le-A$ where $F_{int,\ell}$ is given by
(\ref{eq:Finti}) with $i:=\ell$.
\end{corollary}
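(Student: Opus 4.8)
The plan is to mirror the proof of Lemma~\ref{lemma:int-flux-effect-term-bbd-u} with the roles of $u$ and $v$ interchanged, the one genuine difference being that the diffusion coefficient $d$ now multiplies the \emph{dominant} ($v$-)term of $F_{int,\ell}$ rather than the cross term, so the threshold constant $G_v$ will carry $d$ in a different place than $G_u$ does. Concretely, instead of factoring $(\partial_u W)_{\ell^+}$ out of (\ref{eq:Finti}) I would factor out $d(\partial_v W)_{\ell^+}$, writing
\[
F_{int,\ell}=-d(\partial_v W)_{\ell^+}\left(\frac{\Delta_\ell^+(\partial_v\mathbf{W})}{(\partial_v W)_{\ell^+}}\Delta_\ell^+\mathbf{v}+\frac{1}{d}\frac{\Delta_\ell^+(\partial_u\mathbf{W})}{(\partial_v W)_{\ell^+}}\Delta_\ell^+\mathbf{u}\right).
\]

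Then I would follow the lemma step for step. Using \ref{secondary-prop:Mv-infinity-constant} pick $\widetilde L>L$ with $M_v^{(\widetilde L)}>0$ and a $\widetilde v$ with $\partial_v W(0,\widetilde v)=M_v^{(\widetilde L)}$, and set $C_1:=1-M_v^{(L)}/M_v^{(\widetilde L)}$ and $C_2:=\bigl(R_{u,L}+\max_{\|(u,v)\|<L}|\partial_u W|/M_v^{(\widetilde L)}\bigr)L$, with $G_v:=\max\{\widetilde v+L,(A+M_v^{(\widetilde L)}C_2)/(dM_v^{(\widetilde L)}C_1)\}$; positivity of $C_1$ again comes from \ref{secondary-prop:Mu-L-Mv-L-increasing}. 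Taking (WLOG) $\|(u_\ell,v_\ell)\|<\|(u_{\ell^+},v_{\ell^+})\|$ forces $\|(u_\ell,v_\ell)\|<L$, so $v_\ell<L<G_v$ rules out $-\Delta_\ell^+\mathbf{v}\ge G_v$ and we must have $\Delta_\ell^+\mathbf{v}\ge G_v$; since then $v_{\ell^+}\ge G_v>\widetilde v$, convexity (\ref{prop:convex}) and monotonicity give the chain $\max(0,M_v^{(L)})<M_v^{(\widetilde L)}\le(\partial_v W)_{\ell^+}<M_v^{(\infty)}$, exactly as in (\ref{eq:Mu-inequalities}).

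With these in hand I would bound the two bracketed terms. The dominant $v$-term is bounded below by $C_1G_v$ just as the $u$-term was in the lemma. For the cross $u$-term I would split on the sign of $\Delta_\ell^+\mathbf{u}$: when $\Delta_\ell^+\mathbf{u}>0$ the term is nonnegative by \ref{prop:convex} (so its lower bound is automatic), and when $\Delta_\ell^+\mathbf{u}\le0$ one has $u_{\ell^+}\le u_\ell<L$ and $v_{\ell^+}\ge\widetilde v\ge\underline v$, which places $(u_{\ell^+},v_{\ell^+})$ precisely in the region where $R_{u,L}$ of (\ref{eq:RuL-RvL}) applies, yielding the lower bound $-\tfrac{1}{d}C_2$. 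Combining gives $F_{int,\ell}\le-M_v^{(\widetilde L)}(dC_1G_v-C_2)\le-A$ by the choice of $G_v$. The main thing to get right --- and the only place the proof is not a verbatim transcription of the lemma --- is this bookkeeping of the factor $d$: checking that it lands on the dominant term, that the cross term inherits $1/d$, and that the region-membership conditions $u_{\ell^+}\le L$ and $v_{\ell^+}\ge\underline v$ needed to invoke $R_{u,L}$ actually hold in the relevant sign case.
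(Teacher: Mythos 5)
Your proof is correct and is essentially the paper's own argument: the paper proves this corollary by the same symmetry, defining constants $C_{3},C_{4}$ identical to your $C_{1},C_{2}$ and the same $G_{v}$, and concluding $F_{int,\ell}\le-M_{v}^{(\widetilde{L})}\left(dC_{3}G_{v}-C_{4}\right)\le-A$. Your careful bookkeeping of where the factor $d$ lands, and the verification that $u_{\ell^{+}}\le L$ and $v_{\ell^{+}}\ge\underline{v}$ justify invoking $R_{u,L}$, fills in details the paper leaves implicit in its ``same logic'' sketch.
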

\begin{proof}
The proof follows using the same logic as the proof to Lemma~\ref{lemma:int-flux-effect-term-bbd-u}.
First, find $\widetilde{L}>L$ such that $M_{v}^{(\widetilde{L})}>0$
and $\widetilde{v}$ such that $\partial_{v}(0,\widetilde{v})=M_{v}^{(\widetilde{L})}.$
Then define an analogous set of constants

\[
C_{3}:=1-\frac{M_{v}^{(L)}}{M_{v}^{(\widetilde{L})}},\quad\quad C_{4}:=L\left(R_{u,L}+\frac{\max_{\|(u,v)\|<L}|\partial_{u}W(u,v)|}{M_{v}^{(\widetilde{L})}}\right)
\]
and let

\begin{equation}
G_{v}:=\max\left\{ \widetilde{v}+L,\frac{A+M_{v}^{(\widetilde{L})}C_{4}}{dM_{v}^{(\widetilde{L})}C_{3}}\right\} .\label{eq:Gv}
\end{equation}
We then have that $F_{int,\ell}\le-M_{v}^{(\widetilde{L})}\left(-C_{4}+dC_{3}G_{v}\right)\le-A.$
\end{proof}

\begin{corollary}
\label{cor:int-flux-effect-term-bbd} Suppose the assumptions of Lemma~\ref{lemma:int-flux-effect-term-bbd-u}
hold. Given $A>0$ and $L>0$ where 
\[
\min\left(\|(u_{\ell},v_{\ell})\|,||(u_{\ell^{+}},v_{\ell^{+}})\|\right)\le L,
\]
there exists a $G\ge L$ such that if
\begin{equation}
\Bigl|||(u_{\ell^{+}},v_{\ell^{+}})\|-\|(u_{\ell},v_{\ell})\|\Bigr|\ge G,\label{eq:G-inequality}
\end{equation}
then $\max\left\{ \|(u_{\ell},v_{\ell})\|,\|(u_{\ell^{+}},v_{\ell^{+}})\|\right\} >B^{(K)}$
and the flux-effect term for interior edge $\ell$ is bounded from
above as follows
\begin{equation}
F_{int,\ell}\le-A\label{eq:required-inequality}
\end{equation}
where $F_{int,\ell}$ is given by (\ref{eq:Finti}) with $i:=\ell$.
\end{corollary}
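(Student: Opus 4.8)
The plan is to reduce this corollary to the two results just established, Lemma~\ref{lemma:int-flux-effect-term-bbd-u} and Corollary~\ref{cor:int-flux-effect-term-bbd-v}, by exploiting that the total species concentration is additive, $\|(u,v)\|=u+v$. The key observation is that the hypothesized jump in total concentration across the interior edge $\ell$ decomposes exactly into the two component jumps,
\[
\|(u_{\ell^{+}},v_{\ell^{+}})\|-\|(u_{\ell},v_{\ell})\|=\Delta_{\ell}^{+}\mathbf{u}+\Delta_{\ell}^{+}\mathbf{v}.
\]
Hence the hypothesis (\ref{eq:G-inequality}), together with the triangle inequality, gives $|\Delta_{\ell}^{+}\mathbf{u}|+|\Delta_{\ell}^{+}\mathbf{v}|\ge G$, so that at least one of $|\Delta_{\ell}^{+}\mathbf{u}|$ and $|\Delta_{\ell}^{+}\mathbf{v}|$ is at least $G/2$. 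This is the whole mechanism: a large jump in the \emph{total} concentration forces a large jump in at least one of the two species.

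Next I would feed the given $A$ and $L$ into Lemma~\ref{lemma:int-flux-effect-term-bbd-u} and Corollary~\ref{cor:int-flux-effect-term-bbd-v}, whose hypotheses (namely $\ell\in Z_{int}$ and the min-condition on the two total concentrations) are exactly those assumed here, to obtain the thresholds $G_{u}$ from (\ref{eq:Gu}) and $G_{v}$ from (\ref{eq:Gv}). I then set
\[
G:=\max\{\,2G_{u},\,2G_{v},\,2B^{(K)}\,\}.
\]
With this choice, $|\Delta_{\ell}^{+}\mathbf{u}|+|\Delta_{\ell}^{+}\mathbf{v}|\ge G$ forces either $|\Delta_{\ell}^{+}\mathbf{u}|\ge G/2\ge G_{u}$ or $|\Delta_{\ell}^{+}\mathbf{v}|\ge G/2\ge G_{v}$; applying the corresponding result in whichever case occurs yields $F_{int,\ell}\le-A$, which is (\ref{eq:required-inequality}). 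Moreover $G_{u}\ge L$ and $G_{v}\ge L$ hold by their definitions, so $G\ge 2L>L$, giving the required $G\ge L$.

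It remains to verify the concentration claim. Writing $\delta:=\|(u_{\ell^{+}},v_{\ell^{+}})\|-\|(u_{\ell},v_{\ell})\|$, the quantity $|\delta|$ is the larger of the two total concentrations minus the smaller; since both totals are nonnegative, the larger satisfies $\max\{\|(u_{\ell},v_{\ell})\|,\|(u_{\ell^{+}},v_{\ell^{+}})\|\}\ge|\delta|\ge G\ge 2B^{(K)}>B^{(K)}$, where $B^{(K)}>0$ because $B^{(K)}\ge K\ge\max\{\underline{K},\underline{u},\underline{v}\}>0$. I do not expect a genuine obstacle here: the analytic content was already carried by Lemma~\ref{lemma:int-flux-effect-term-bbd-u}, and the only care required is the bookkeeping in choosing $G$ large enough to meet all three demands at once, namely the flux bound (through $2G_{u}$ and $2G_{v}$), the lower bound $G\ge L$, and the strict concentration bound (through $2B^{(K)}$).
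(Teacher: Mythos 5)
Your proposal is correct and follows essentially the same route as the paper's proof: set $G$ to a maximum involving $2G_{u}$, $2G_{v}$, and a $B^{(K)}$ term, use the additive decomposition $\|(u_{\ell^{+}},v_{\ell^{+}})\|-\|(u_{\ell},v_{\ell})\|=\Delta_{\ell}^{+}\mathbf{u}+\Delta_{\ell}^{+}\mathbf{v}$ with the triangle inequality to force $|\Delta_{\ell}^{+}\mathbf{u}|\ge G_{u}$ or $|\Delta_{\ell}^{+}\mathbf{v}|\ge G_{v}$, and invoke Lemma~\ref{lemma:int-flux-effect-term-bbd-u} or Corollary~\ref{cor:int-flux-effect-term-bbd-v} accordingly. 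Your choice of $2B^{(K)}$ in place of the paper's $B_{K}$, together with the observation $B^{(K)}\ge K>0$, is a slight refinement that makes the strict inequality $\max\{\|(u_{\ell},v_{\ell})\|,\|(u_{\ell^{+}},v_{\ell^{+}})\|\}>B^{(K)}$ airtight even when the smaller concentration is zero.
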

\begin{proof}
Let

\begin{equation}
G=\max\left\{ 2G_{u},2G_{v},B_{K}\right\} .\label{eq:G}
\end{equation}
where $G_{u}$ is given by (\ref{eq:Gu}), $G_{v}$ is given by (\ref{eq:Gv}),
and $B_{K}$ is given by (\ref{eq:BK}). Notice that
\[
\Bigl|||(u_{\ell^{+}},v_{\ell^{+}})\|-\|(u_{\ell},v_{\ell})\|\Bigr|=\left|\Delta_{\ell}^{+}\mathbf{u}+\Delta_{\ell}^{+}\mathbf{v}\right|
\]
and, therefore, (\ref{eq:G-inequality}) and (\ref{eq:G}) imply that
either $\left|\Delta_{\ell}^{+}\mathbf{u}\right|\ge G_{u}$ or $\left|\Delta_{\ell}^{+}\mathbf{v}\right|\ge G_{v}$.
Thus, we apply either Lemma~\ref{lemma:int-flux-effect-term-bbd-u}
or Corollary~\ref{cor:int-flux-effect-term-bbd-v} to show that (\ref{eq:required-inequality})
holds. Finally, since $G\ge B_{K}$, using (\ref{eq:G-inequality}),
we have that $\max\left\{ \|(u_{\ell},v_{\ell})\|,\|(u_{\ell^{+}},v_{\ell^{+}})\|\right\} >B^{(K)}$.
\end{proof}

\begin{corollary}
\label{cor:bdy-flux-effect-term-bbd} Suppose the assumptions of Lemma~\ref{lemma:int-flux-effect-term-bbd-u}
hold where instead we pick $\ell\in Z_{bdy}$. Given $A>0$ and $L>0$
where $\min\left(\|(u_{\ell},v_{\ell})\|,||(u_{\ell^{+}},v_{\ell^{+}})\|\right)\le L$,
find the $G$ from Corollary~\ref{cor:int-flux-effect-term-bbd},
given by (\ref{eq:G}). If (\ref{eq:G-inequality}) holds, then
\[
\max\left\{ \|(u_{\ell},v_{\ell})\|,\|(u_{\ell^{+}},v_{\ell^{+}})\|\right\} >B^{(K)}
\]
 and the flux-effect term for boundary edge $\ell$ is bounded from
above as follows

\begin{equation}
\begin{aligned}F_{bdy,\ell} & \le-A\end{aligned}
\label{eq:required-inequality-2}
\end{equation}
where $F_{bdy,\ell}$ is given by (\ref{eq:Fbdyi}) with $i:=\ell$.
\end{corollary}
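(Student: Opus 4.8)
The plan is to mirror the proof of Lemma~\ref{lemma:int-flux-effect-term-bbd-u} (and Corollary~\ref{cor:int-flux-effect-term-bbd-v}) for the boundary flux-effect term, exploiting that $F_{bdy,\ell}$ has a structurally simpler form than $F_{int,\ell}$. Since $\ell\in Z_{bdy}$ we have $|n_\ell|=1$; without loss of generality take $n_\ell=1$, so that $\ell\in Y^C$, $\ell^+\in Y$, and by (\ref{eq:Fbdyi})
\[
F_{bdy,\ell}=(\partial_u W)_\ell\,\Delta_\ell^+\mathbf{u}+d(\partial_v W)_\ell\,\Delta_\ell^+\mathbf{v},
\]
with the partial derivatives evaluated at the $Y^C$-compartment $\ell$ (the case $n_\ell=-1$ is symmetric, the derivatives then being evaluated at $\ell^+$). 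Because $\ell^+\in Y$ gives $\|(u_{\ell^+},v_{\ell^+})\|<B^{(K)}\le G$, the hypothesis (\ref{eq:G-inequality}) together with $|\Delta_\ell^+\mathbf{u}+\Delta_\ell^+\mathbf{v}|=\bigl|\,\|(u_{\ell^+},v_{\ell^+})\|-\|(u_\ell,v_\ell)\|\,\bigr|\ge G$ forces the $Y^C$-compartment to carry the larger total concentration, i.e. $\|(u_\ell,v_\ell)\|>B^{(K)}$ and $\Delta_\ell^+\mathbf{u}+\Delta_\ell^+\mathbf{v}\le -G$. This already yields the claimed inequality $\max\{\|(u_\ell,v_\ell)\|,\|(u_{\ell^+},v_{\ell^+})\|\}>B^{(K)}$ and identifies $\ell^+$ as the compartment realizing the minimum, so $\|(u_{\ell^+},v_{\ell^+})\|\le L$.

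Next I would reuse the pigeonhole step from Corollary~\ref{cor:int-flux-effect-term-bbd}: from $\Delta_\ell^+\mathbf{u}+\Delta_\ell^+\mathbf{v}\le -G\le -\max\{2G_u,2G_v\}$, at least one of $\Delta_\ell^+\mathbf{u}\le -G_u$ or $\Delta_\ell^+\mathbf{v}\le -G_v$ must hold. Consider the first case (the second is handled symmetrically with the constants of Corollary~\ref{cor:int-flux-effect-term-bbd-v}). Here $u_\ell=u_{\ell^+}-\Delta_\ell^+\mathbf{u}\ge G_u\ge\widetilde{u}+L$, so by \ref{prop:convex} (monotonicity of $\partial_uW$ in both arguments) and the definition of $\widetilde{u}$ we obtain $(\partial_uW)_\ell\ge M_u^{(\widetilde{L})}>0$. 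Factoring out this positive quantity gives
\[
F_{bdy,\ell}=(\partial_uW)_\ell\left(\Delta_\ell^+\mathbf{u}+d\,\frac{(\partial_vW)_\ell}{(\partial_uW)_\ell}\,\Delta_\ell^+\mathbf{v}\right)\le(\partial_uW)_\ell\left(-G_u+d\,\frac{(\partial_vW)_\ell}{(\partial_uW)_\ell}\,\Delta_\ell^+\mathbf{v}\right),
\]
so it remains to bound the cross-term $d\,\frac{(\partial_vW)_\ell}{(\partial_uW)_\ell}\,\Delta_\ell^+\mathbf{v}$ from above by $dR_{v,L}L$.

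The cross-term estimate is the main obstacle, and it plays the same role here as (\ref{eq:largeueffect2}) does for the interior edge. I would split on the sign of $\Delta_\ell^+\mathbf{v}$ together with the size of $v_\ell$: if $\Delta_\ell^+\mathbf{v}\ge0$ then $v_\ell\le v_{\ell^+}\le L$ and $\Delta_\ell^+\mathbf{v}\le L$, so (\ref{eq:RuL-RvL}) and \ref{secondary-prop:u-v-underbar-exist} yield $\frac{(\partial_vW)_\ell}{(\partial_uW)_\ell}\Delta_\ell^+\mathbf{v}\le R_{v,L}L$; if $\Delta_\ell^+\mathbf{v}<0$ with $v_\ell\le L$ the same ratio bound applies with $|\Delta_\ell^+\mathbf{v}|\le v_\ell\le L$; and if $\Delta_\ell^+\mathbf{v}<0$ while $v_\ell\ge\underline{v}$, then $(\partial_vW)_\ell>0$ by \ref{secondary-prop:u-v-underbar-exist} makes the cross-term negative, exactly as the $\Delta_\ell^+\mathbf{v}>0$ branch is handled in the interior lemma. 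In every case the cross-term is at most $dR_{v,L}L$, whence $F_{bdy,\ell}\le M_u^{(\widetilde{L})}\bigl(-G_u+dR_{v,L}L\bigr)$.

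Finally, I would observe that the same $G$ from (\ref{eq:G}) is already sufficient. In the boundary flux the coefficient of $\Delta_\ell^+\mathbf{u}$ is $1$ rather than the contraction factor $C_1<1$ appearing in Lemma~\ref{lemma:int-flux-effect-term-bbd-u}, and $R_{v,L}L\le C_2$; consequently the choice of $G_u$ in (\ref{eq:Gu}) forces $M_u^{(\widetilde{L})}\bigl(G_u-dR_{v,L}L\bigr)\ge M_u^{(\widetilde{L})}\bigl(C_1G_u-dC_2\bigr)\ge A$, so that $F_{bdy,\ell}\le -A$. The case $\Delta_\ell^+\mathbf{v}\le -G_v$ is identical after interchanging the roles of $u$ and $v$, using $G_v$ from (\ref{eq:Gv}) and $R_{u,L}$, and the case $n_\ell=-1$ follows by the symmetric computation with the derivatives evaluated at $\ell^+$. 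This completes the argument.
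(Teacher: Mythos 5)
Your proposal is correct and takes essentially the same approach as the paper: the paper also reduces $F_{bdy,\ell}$ to the interior-edge analysis of Lemma~\ref{lemma:int-flux-effect-term-bbd-u} and Corollaries~\ref{cor:int-flux-effect-term-bbd-v}--\ref{cor:int-flux-effect-term-bbd} with the $Y$-compartment's derivatives formally set to zero, reusing the same $G_u$, $G_v$, and $G$ and observing that the bounds only improve (your point that the coefficient of $\Delta_\ell^+\mathbf{u}$ becomes $1$ rather than $C_1$ and $R_{v,L}L\le C_2$). The only differences are cosmetic --- you fix $n_\ell=1$ with derivatives at $\ell$ while the paper's WLOG gives $n_\ell=-1$ with derivatives at $\ell^+$, and you write out the cross-term cases the paper leaves implicit.
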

\begin{proof}
The result follows directly from Corollary~\ref{cor:int-flux-effect-term-bbd}.
Without loss of generality again suppose $\|(u_{\ell^{+}},v_{\ell^{+}})\|>\|(u_{\ell},v_{\ell})\|.$
It immediately follows that $\|(u_{\ell^{+}},v_{\ell^{+}})\|\ge G\ge B^{(K)}$.
This, in turn, implies that $\ell^{+}\in Y^{C}$ and hence $\ell\in Y$.
The equation for $F_{bdy,\ell}$ then reduces to
\[
F_{bdy,\ell}=-(\partial_{u}W)_{\ell^{+}}\Delta_{\ell}^{+}\mathbf{u}-d(\partial_{v}W)_{\ell^{+}}\Delta_{\ell}^{+}
\]
To bound this equation, we apply the logic from Lemma~\ref{lemma:int-flux-effect-term-bbd-u},
Corollary~\ref{cor:int-flux-effect-term-bbd-v} and \ref{cor:int-flux-effect-term-bbd}
where $(\partial_{u}W)_{\ell}$ and $(\partial_{v}W)_{\ell}$ are
equal to zero. The result of this logic gives us that $F_{bdy,\ell}\le-A$.
\end{proof}

We will next assume the set $Y$ is not empty and show that when a
threshold total species concentration is passed in at least one compartment,
we can find a interior or boundary edge that satisfies either (\ref{eq:required-inequality})
or (\ref{eq:required-inequality-2}), respectively.
\begin{lemma}
\label{lemma:adj-comps-exist} Let $W$ be a LLF for the system given
by (\ref{eq:sysDisc}) and pick an arbitrary time $t>0$. Pick $A>0$
and suppose there exists a compartment $k$ such that $k\in Y$. Then
there is a threshold concentration $C>0$ such that if $\max(\|(u_{i},v_{i})\|)_{i=1}^{n}\ge C$,
then there exists an interior or boundary edge, $\ell$, that satisfies
either (\ref{eq:required-inequality}) or (\ref{eq:required-inequality-2}),
respectively. 
\end{lemma}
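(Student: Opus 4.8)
The plan is to locate a single interior or boundary edge $\ell$ across which the total species concentration jumps by a large amount while one of the two adjacent compartments still has small concentration, so that Corollary~\ref{cor:int-flux-effect-term-bbd} or~\ref{cor:bdy-flux-effect-term-bbd} applies directly. The raw material is already present: the hypothesis gives a $Y$-compartment $k$, for which $\|(u_k,v_k)\|<B^{(K)}$, together with a compartment $j^\ast$ attaining $\max_i\|(u_i,v_i)\|\ge C$. Since the compartments lie on a line, $k$ and $j^\ast$ are joined by a contiguous path of at most $n-1$ edges, along which the total concentration climbs from below $B^{(K)}$ up to at least $C$. First I would fix $A$ and, for each $L>0$, write $G(L)$ for the constant $G$ supplied by Corollary~\ref{cor:int-flux-effect-term-bbd} (eq.~(\ref{eq:G})); replacing $G$ by its running supremum I may assume $G(\cdot)$ is nondecreasing, so that $L\mapsto L+G(L)$ is nondecreasing as well.

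The core idea is that a naive pigeonhole (total rise $\ge C-B^{(K)}$ spread over $\le n-1$ edges forces one jump of size $\ge (C-B^{(K)})/(n-1)$) is \emph{not} enough, because the edge carrying that jump might have both endpoints large, and $G(L)$ can grow without bound as $L\to\infty$ (e.g.\ when $M_u^{(\infty)}$ is finite, $C_1\to0$ in~(\ref{eq:Gu})). I must therefore keep the smaller endpoint of the chosen edge bounded. To pin down the threshold, I would define a finite sequence inductively by $\beta_0:=B^{(K)}$ and $\beta_{j+1}:=\beta_j+G(\beta_j)$ for $j=0,\dots,n-2$, and set $C:=\beta_{n-1}$. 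This $C$ is a finite positive constant depending only on $A$, the LLF, and $n$.

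The contradiction argument then runs as follows. Suppose no edge $\ell$ on the path satisfies the hypothesis of the corollaries with $L:=\min\{\|(u_\ell,v_\ell)\|,\|(u_{\ell^+},v_{\ell^+})\|\}$; since $\min\le L$ holds with equality, this means every edge has
\[
\bigl|\,\|(u_{\ell^+},v_{\ell^+})\|-\|(u_\ell,v_\ell)\|\,\bigr| < G\bigl(\min\{\|(u_\ell,v_\ell)\|,\|(u_{\ell^+},v_{\ell^+})\|\}\bigr).
\]
Tracking the running maximum $\mu$ of the total concentration along the path, I observe that at each step where $\mu$ strictly increases the smaller endpoint equals the old value $\le\mu$, so $\mu$ increases by strictly less than $G(\mu)$; since $x\mapsto x+G(x)$ is nondecreasing and there are at most $n-1$ such increases, an induction gives $\mu<\beta_{n-1}=C$ at the end of the path, contradicting $\mu\ge\|(u_{j^\ast},v_{j^\ast})\|\ge C$. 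Hence some edge $\ell$ violates the assumption, i.e.\ its total-concentration difference is at least $G(L)$ with $L$ equal to its smaller (hence bounded) endpoint, so~(\ref{eq:G-inequality}) holds; Corollary~\ref{cor:int-flux-effect-term-bbd} or~\ref{cor:bdy-flux-effect-term-bbd} then yields $\max\{\|(u_\ell,v_\ell)\|,\|(u_{\ell^+},v_{\ell^+})\|\}>B^{(K)}$, which forces the larger endpoint into $Y^C$. Consequently $\ell$ is genuinely an interior edge (both endpoints in $Y^C$) or a boundary edge (one endpoint in $Y$), and the matching corollary delivers $F_{int,\ell}\le-A$ or $F_{bdy,\ell}\le-A$ as required. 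The main obstacle is exactly the possibility of a slow, non-monotone climb with tiny steps near the bottom and the $L$-dependence of $G$; the inductive/geometric definition of $C$ is what defeats it. Minor degenerate cases ($n=1$, which makes the hypothesis vacuous, and $L=0$, handled by using any $L>0$ with $\min\le L$) are dispatched separately.
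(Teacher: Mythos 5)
Your proposal is correct and is essentially the paper's own argument: the paper also argues by contradiction, fixes the $Y$-compartment $k$ with $\|(u_k,v_k)\|<B^{(K)}$, and iteratively bounds the neighboring compartments via exactly your recursion (its $L_i:=L_{i-1}+G_i$ with $G_i$ from (\ref{eq:G}) at $L:=L_{i-1}$ is your $\beta_{j+1}:=\beta_j+G(\beta_j)$), taking $C:=B^{(K)}+\sum_{i\ne k}G_i$ and using $G\ge B_K$ to guarantee the offending edge is genuinely interior or boundary. Your added care about the running maximum and monotonizing $G$ is a harmless refinement of the same idea (one can instead just invoke the corollary at $L:=\beta_j$, since its hypothesis $\min\le L$ is monotone in $L$), so the two proofs coincide in substance.
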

\begin{proof}
Suppose there does not exist an interior edge or boundary edge that
satisfies either (\ref{eq:required-inequality}) or (\ref{eq:required-inequality-2}),
respectively. The species concentration in compartment $k$ is bounded
such that $\|(u_{k},v_{k})\|<B^{(K)}$. We will apply either Corollary~\ref{cor:int-flux-effect-term-bbd}
or \ref{cor:bdy-flux-effect-term-bbd} to iteratively bound the compartment
concentration for $i=k+1,k+2,...,n$ and $i=k-1,k-2,...,1$. 

Define $L_{k}:=B^{(K)}$ and, for $i=k+1,...,n$, iteratively find
the $G_{i}$ given by (\ref{eq:G}) in Corollary~\ref{cor:int-flux-effect-term-bbd}
where $L:=L_{i-1}$. Next, set $L_{i}:=G_{i}+L_{i-1}$. If $\|(u_{i},v_{i})\|-\|(u_{i-1},v_{i-1})\|>G_{i}$
then edge $i$ is either an interior or boundary edge and, by Corollary~\ref{cor:int-flux-effect-term-bbd}
or \ref{cor:bdy-flux-effect-term-bbd} either (\ref{eq:required-inequality})
or (\ref{eq:required-inequality-2}) holds, resulting in a contradiction.
Therefore, $\|(u_{i},v_{i})\|-\|(u_{i-1},v_{i-1})\|\le G_{i}=L_{i}-L_{i-1}$,
implying $\|(u_{i},v_{i})\|\le L_{i}$. Thus, we have obtained an
upper bound for the compartment $i$ and can continue the iteration.
Similarly, for $i=k-1,...,1$, the same methodology can be used to
generate compartment bounds, where we find the $G_{i}$ given by (\ref{eq:G})
where $L:=L_{i+1}$ . 

This logic leads to the following bound
\[
\|(u_{k+i},v_{k+i})\|<C
\]
for $i=-k,...,-2,-1,1,2,..,n-k$ where the threshold species concentration
$C$ is 
\begin{equation}
C:=B^{(K)}+\sum_{i=1,i\ne k}^{n}G_{i}.\label{eq:C}
\end{equation}
Suppose that there exists a compartment $m$ such that $\|(u_{m},v_{m})\|\ge C$.
We have a contradiction, and therefore there exists at least one interior
or boundary edge such that either (\ref{eq:required-inequality})
or (\ref{eq:required-inequality-2}) holds. 
\end{proof}

In the next lemma we prove that if a compartment exceeds a threshold
species concentration then the diffusive contribution to the LLF Evolution
Equation is nonpositive (i.e., $W_{Y^{C},D}\le0$).
\begin{lemma}
\label{lemma:diffusion-component-nonincreasing} Let $W$ be a LLF
for the system given by (\ref{eq:sysDisc}). Define $A:=nF_{max}$
where $F_{max}$ is given by (\ref{eq:Fmax}) in Lemma~\ref{lemma:Fbdyi-bounded}.
If we use this $A$ as the constant in Lemma $\ref{lemma:adj-comps-exist}$,
we can let $C$ be as defined in (\ref{eq:C}). If at an arbitrary
time $t>0$, $\max_{i}\|(u_{i},v_{i})\|\ge C$, then $W_{Y^{C},D}\le0.$
\end{lemma}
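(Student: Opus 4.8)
The plan is to start from the flux decomposition (\ref{eq:W-YCD-fluxes}),
\[
W_{Y^{C},D}=\sum_{i\in Z_{bdy}}F_{bdy,i}+\sum_{i\in Z_{int}}F_{int,i},
\]
and combine the two structural facts already in hand: every interior flux-effect term is strictly negative (the observation preceding Lemma~\ref{lemma:Fbdyi-bounded}, a direct consequence of \ref{prop:convex}), and every boundary flux-effect term satisfies $F_{bdy,i}\le F_{max}$ by Lemma~\ref{lemma:Fbdyi-bounded}. Since each edge sits strictly between two of the $n$ compartments, $|Z_{bdy}|\le n-1$, so the entire boundary sum contributes at most $(n-1)F_{max}$. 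The whole proof then reduces to exhibiting a single flux-effect term negative enough to overwhelm this positive contribution.

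First I would dispose of the degenerate case $Y=\emptyset$: then every compartment is a $Y^{C}$-compartment, there are no boundary edges, and $W_{Y^{C},D}=\sum_{i\in Z_{int}}F_{int,i}\le 0$ immediately. For the main case $Y\ne\emptyset$, I would apply Lemma~\ref{lemma:adj-comps-exist} with the prescribed constant $A:=nF_{max}$; the hypothesis $\max_{i}\|(u_{i},v_{i})\|\ge C$ is precisely what that lemma requires, and it yields an edge $\ell$ --- interior or boundary --- on which the corresponding flux-effect term is at most $-A$, i.e.\ $F_{int,\ell}\le-A$ via (\ref{eq:required-inequality}) (Corollary~\ref{cor:int-flux-effect-term-bbd}) or $F_{bdy,\ell}\le-A$ via (\ref{eq:required-inequality-2}) (Corollary~\ref{cor:bdy-flux-effect-term-bbd}). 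Isolating this single term, bounding the at-most-$(n-1)$ boundary terms by $F_{max}$ apiece, and discarding the remaining nonpositive interior terms gives, in either subcase,
\[
W_{Y^{C},D}\le(n-1)F_{max}-nF_{max}=-F_{max}\le 0,
\]
since $F_{max}\ge 0$ by its definition (\ref{eq:Fmax}) (the boundary-edge subcase in fact yields the stronger bound $-2F_{max}$).

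The only genuine obstacle is the bookkeeping, and it is exactly why the statement fixes $A=nF_{max}$: each of the up-to-$(n-1)$ boundary edges can inject as much as $F_{max}$ into the diffusive flux, so to force the net flux nonpositive I need one term that alone defeats their combined worst case. Two minor points must be checked en route. First, the distinguished edge $\ell$ genuinely indexes a summand of $W_{Y^{C},D}$, which holds because an interior or boundary edge contributes a term to one of the two sums by construction. Second, the hypothesis forces $Y^{C}\ne\emptyset$ so that the decomposition is meaningful: since $C\ge B^{(K)}$ from (\ref{eq:C}), and any compartment in $\Omega_{K}$ has total concentration strictly below $B^{(K)}$ by (\ref{eq:BK}) and \ref{secondary-prop:K-exists}, the compartment achieving the maximum must lie in $\Omega_{K}^{C}$.
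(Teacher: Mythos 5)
Your proposal is correct and follows essentially the same route as the paper's proof: the same flux decomposition (\ref{eq:W-YCD-fluxes}), the same split into the $Y=\emptyset$ case (handled via \ref{prop:convex}) and the $Y\ne\emptyset$ case (handled via Lemma~\ref{lemma:adj-comps-exist} with $A:=nF_{max}$), and the same counting argument bounding the at-most-$(n-1)$ boundary terms by $F_{max}$ each against the single term below $-nF_{max}$. Your added checks (that $F_{max}\ge0$, that the distinguished edge indexes a summand, and that $Y^{C}\ne\emptyset$) are correct details the paper leaves implicit, not a different argument.
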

\begin{proof}
First suppose that $Y$ is empty. Then $Y^{C}=\mathcal{N}$ and
\[
W_{Y^{C}}=\sum_{i=1}^{n}(W)_{i}.
\]
The diffusion component of the LLF Evolution Equation is given as
\[
\begin{aligned}W_{Y^{C},D} & =\left(\nabla_{u}W_{Y^{C}}\right)D\mathbf{u}^{T}+d\left(\nabla_{v}W_{Y^{C}}\right)D\mathbf{v}^{T}\\
 & =\sum_{i=1}^{n-1}-\Delta_{i}^{+}(\partial_{u}\mathbf{W})\Delta_{i}^{+}\mathbf{u}-d\Delta_{i}^{+}(\partial_{v}\mathbf{W})\Delta_{i}^{+}\mathbf{v}\\
 & \le0.
\end{aligned}
\]
This result follows from property \ref{prop:convex} of the LLF.

Next, suppose that $Y$ is not empty. By Lemma~\ref{lemma:adj-comps-exist},
there exists either an interior edge that satisfies (\ref{eq:required-inequality})
or a boundary edge that satisfies (\ref{eq:required-inequality-2}).
We will let $\ell$ denote the index of this edge. We know $\max\left\{ \|(u_{\ell},v_{\ell})\|,\|(u_{\ell^{+}},v_{\ell^{+}})\|\right\} >B^{(K)}$.
Without loss of generality, suppose $\|(u_{\ell^{+}},v_{\ell^{+}})\|>\|(u_{\ell},v_{\ell})\|$
and thus $\ell^{+}\in Y^{C}$. We will next consider two possible
cases: $\ell\in Z_{int}$ or $\ell\in Z_{bdy}$.

First let's assume $\ell\in Z_{int}$. By Corollary~\ref{cor:int-flux-effect-term-bbd},
we have that 
\[
F_{int,\ell}\le-nF_{max}.
\]
Using that $F_{int,i}<0$ and $F_{bdy,i}\le F_{max}$ for all $i$,
and that $Z_{bdy}$ has at most $n-1$ elements, we have that 
\[
\begin{aligned}W_{Y^{C},D} & =\sum_{i\in Z_{bdy}}F_{bdy,i}+\sum_{i\in Z_{int}}F_{int,i}\\
 & \le\sum_{i\in Z_{bdy}}F_{bdy,i}+F_{int,\ell}\\
 & \le(n-1)F_{max}-nF_{max}\\
 & <0.
\end{aligned}
\]
Next, assume $\ell\in Z_{bdy}$. We have that by Corollary~\ref{cor:bdy-flux-effect-term-bbd}
\[
F_{bdy,\ell}<-nF_{max}.
\]
We then can similarly bound $W_{Z,D}$ as follows,
\[
\begin{aligned}W_{Y^{C},D} & =\sum_{i\in Z_{bdy}}F_{bdy,i}+\sum_{i\in Z_{int}}F_{int,i}\\
 & \le F_{bdy,\ell}+\sum_{i\in Z_{bdy},i\ne\ell}F_{bdy,i}\\
 & \le-nF_{max}+(n-2)F_{max}\\
 & <0.
\end{aligned}
\]
\end{proof}

Finally, we'll prove the main result of this section. Specifically,
we will next show that given a compartment exceeds a threshold species
concentration, the solution to the LLF Evolution Equation is decreasing.
\begin{corollary}
\label{cor:solution-to-LLF-equation-nonincreasing} If the assumptions
of Lemma~\ref{lemma:diffusion-component-nonincreasing} hold, then
$\frac{dW_{Y^{C}}}{dt}\le0$. 
\end{corollary}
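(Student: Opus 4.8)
The plan is to assemble the final result directly from the decomposition of the LLF Evolution Equation into its reactive and diffusive parts, which was set up as $\frac{dW_{Y^{C}}}{dt}=\gamma W_{Y^{C},R}+W_{Y^{C},D}$ in equations (\ref{eq:W-YCR})--(\ref{eq:W-YCD}). Since $\gamma>0$ is a fixed positive constant, it suffices to show that each of $W_{Y^{C},R}$ and $W_{Y^{C},D}$ is nonpositive under the hypotheses of Lemma~\ref{lemma:diffusion-component-nonincreasing}. The diffusive part is immediate: that lemma gives us exactly $W_{Y^{C},D}\le 0$ whenever $\max_{i}\|(u_{i},v_{i})\|\ge C$, which is precisely the standing assumption here. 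So the only remaining work is to establish that the reactive contribution $W_{Y^{C},R}$ is nonpositive.

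For the reactive part, I would argue compartment by compartment using property \ref{prop:decreases-with-time}. Writing
\[
W_{Y^{C},R}=\sum_{i\in Y^{C}}\Bigl((\partial_{u}W)_{i}\,f(u_{i},v_{i})+(\partial_{v}W)_{i}\,g(u_{i},v_{i})\Bigr),
\]
I observe that each summand is exactly $\left(\nabla W(u_{i},v_{i})\right)(f(u_{i},v_{i}),g(u_{i},v_{i}))^{T}$. The key point is that every index $i\in Y^{C}$ corresponds to a compartment with $(u_{i},v_{i})\in\Omega_{K}^{C}$, so by the definition of $\Omega_{K}$ we have $W(u_{i},v_{i})\ge M^{(K)}$, and since $K\ge\underbar{K}$ by \ref{secondary-prop:K-exists}, the total species concentration there satisfies $\|(u_{i},v_{i})\|\ge\underbar{K}$. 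Hence \ref{prop:decreases-with-time} applies to each such compartment and makes each summand nonpositive, giving $W_{Y^{C},R}\le 0$.

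Combining these two facts yields $\frac{dW_{Y^{C}}}{dt}=\gamma W_{Y^{C},R}+W_{Y^{C},D}\le 0$, which is the desired conclusion. I anticipate this corollary to be essentially a bookkeeping assembly rather than a source of genuine difficulty, since all the substantive estimates were already carried out in Lemmas~\ref{lemma:Fbdyi-bounded}--\ref{lemma:diffusion-component-nonincreasing}. The one point requiring a little care — and the closest thing to an obstacle — is justifying that membership in $Y^{C}$ really does force $\|(u_{i},v_{i})\|\ge\underbar{K}$ so that the hypothesis of \ref{prop:decreases-with-time} is met on the nose; this hinges on correctly chaining the inclusion $\Omega_{K}^{C}\subseteq\{W\ge M^{(K)}\}$ with the inequality $K\ge\underbar{K}$ from \ref{secondary-prop:K-exists}, as already noted in the discussion of the $\Omega_{K}$ region. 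Everything else is a direct substitution into the evolution equation.
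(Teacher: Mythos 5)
Your proposal is correct and takes essentially the same route as the paper's proof: decompose $\frac{dW_{Y^{C}}}{dt}=\gamma W_{Y^{C},R}+W_{Y^{C},D}$, invoke Lemma~\ref{lemma:diffusion-component-nonincreasing} for the diffusive part, and apply \ref{prop:decreases-with-time} termwise to the reactive sum over $i\in Y^{C}$. Your additional care in verifying that $i\in Y^{C}$ forces $\|(u_{i},v_{i})\|\ge\underline{K}$ (via $W(u_{i},v_{i})\ge M^{(K)}$ and \ref{secondary-prop:K-exists}) is precisely the justification the paper leaves implicit in its discussion of the $\Omega_{K}$ region in Section~\ref{subsec:LLF-omegaK-region}.
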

\begin{proof}
From Lemma~\ref{lemma:diffusion-component-nonincreasing} we know
that $W_{Y^{C},D}\le0$. Furthermore, (\ref{eq:W-YCR}) can be rearranged
as follows 
\[
W_{Y^{C},R}=\sum_{i\in Y^{C}}\left(\nabla W(u_{i},v_{i})\right)(f(u_{i},v_{i}),g(u_{i},v_{i}))^{T}.
\]
This sum is clearly less than zero by \ref{prop:decreases-with-time}
of the LLF. Therefore, 
\[
\frac{dW_{Y^{C}}}{dt}=\gamma W_{Y^{C},R}+W_{Y^{C},D}\le0.
\]
\end{proof}

\subsection{The System LLF is bounded\label{subsec:proofs-2}}

In this section we will consider how the \emph{System LLF }or the
sum of LLFs across all compartments evolves with time. We will first
only consider times when one compartment exceeds a threshold species
concentration. During these times we will consider what occurs to
the System LLF when the membership of $Y$ and $Y^{C}$ changes. Below
we define a time-dependent function that bounds the System LLF. We
will show that this function decreases with time if any compartment
exceeds a threshold total species concentration (Lemma~\ref{lemma:W-decreasing}).
We conclude by considering all times during which a solution exists,
i.e., $t\in[0,T_{max})$, and show that the given function will always
bound the system. Hence, the total species concentration in each compartment
is uniformly bounded over time (Theorem~\ref{thm:mainresult}). Furthermore,
since the solution does not blow up, we are guaranteed that $T_{max}=\infty$.

Consider the discretized RD system given by (\ref{eq:sysDisc}) and
let $W:\mathbb{R}_{\ge0}^{2}\rightarrow\mathbb{R}$ be a LLF for the
reactions. Using this LLF and the initial data, we define the threshold
species concentration $\underline{C}$ as follows. 
\begin{definition}
\label{def:C-thres}Find the $F_{max}$ for the LLF given by (\ref{eq:Fmax})
in~Lemma~\ref{lemma:Fbdyi-bounded}. Apply Lemma~\ref{lemma:adj-comps-exist}
where $A:=nF_{max}$ to find $C$. We then define the threshold species
concentration as 
\[
\text{\ensuremath{\underline{C}}}=\max\left\{ \left(\|(u_{i,0},v_{i,0})\|\right)_{i=1}^{n},C\right\} .
\]
\end{definition}
We will consider times during which the total species concentration
in at least one compartment is greater than or equal to $\underline{C}$
and examine what occurs to the sum of LLFs when crossings of $\partial\Omega_{K}$
occur. Let $n_{t}$ be the number of compartments contained in $\Omega_{K}$
at time $t$ and define 
\begin{equation}
\mathcal{W}(t)=n_{t}M^{(K)}+W_{Y^{C}(t)}(\mathbf{u}(t),\mathbf{v}(t))\label{eq:mathcalW}
\end{equation}
Note that $\mathcal{W}(t)$ provides an upper bound on the System
LLF at time $t$. 

We will first show that while the species concentration in at least
one compartment exceeds $\underline{C}$, $\mathcal{W}(t)$ is a decreasing
function of time. To do this we consider a closed interval of time,
and allow for crossings of $\partial\Omega_{K}$ to occur at either
end of the interval.
\begin{lemma}
\label{lemma:W-decreasing} Pick $\tau_{1}$ and $\tau_{2}$ such
that no crossings occur for $t\in\left(\tau_{1},\tau_{2}\right)$.
Suppose for $t\in[\tau_{1},\tau_{2}]$, $\max\left\{ \left(\|(u_{i},v_{i})\|\right)_{i=1}^{n}\right\} \ge\underline{C}$.
Then, 
\begin{equation}
\mathcal{W}\left(\tau_{2}\right)\le\mathcal{W}\left(\tau_{1}\right).\label{eq:Wt1-less-than-Wt2}
\end{equation}
\end{lemma}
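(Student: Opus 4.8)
The plan is to prove that $\mathcal{W}$, defined in (\ref{eq:mathcalW}), is continuous across the endpoints $\tau_1,\tau_2$ and non-increasing on the open interval $(\tau_1,\tau_2)$; the inequality (\ref{eq:Wt1-less-than-Wt2}) is then immediate. The whole point of including the term $n_t M^{(K)}$ in the definition of $\mathcal{W}$ is to absorb the jumps that $W_{Y^C}$ suffers whenever the membership of $Y^C$ changes, and making this cancellation explicit is the crux of the argument.

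First I would handle the open interval. Since no crossings occur for $t\in(\tau_1,\tau_2)$, the set $Y^C$ is fixed there, so $n_t$ equals a constant $n^\ast$ and $W_{Y^C}$ is continuously differentiable. The hypothesis $\max_i\|(u_i,v_i)\|\ge\underline{C}$ holds on all of $[\tau_1,\tau_2]$, and since $\underline{C}\ge C$ by Definition~\ref{def:C-thres}, the assumptions of Lemma~\ref{lemma:diffusion-component-nonincreasing} are satisfied at each such $t$. Corollary~\ref{cor:solution-to-LLF-equation-nonincreasing} then yields $\frac{dW_{Y^C}}{dt}\le 0$ throughout $(\tau_1,\tau_2)$, so $\mathcal{W}(t)=n^\ast M^{(K)}+W_{Y^C}(t)$ is non-increasing on the open interval; in particular $\lim_{t\to\tau_2^-}\mathcal{W}(t)\le\lim_{t\to\tau_1^+}\mathcal{W}(t)$.

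The main obstacle is the endpoints, where crossings of $\partial\Omega_K$ are permitted and both $n_t$ and $Y^C$ may change discontinuously. Here I would show that $\mathcal{W}$ is right-continuous at $\tau_1$ and left-continuous at $\tau_2$. Consider a compartment $j$ that crosses at $\tau_1$. By the definition of a crossing, $j\in Y^C(\tau_1)$ while $j$ sits on the opposite side throughout the open interval, and \emph{crucially} $(u_j,v_j)(\tau_1)\in\partial\Omega_K$ forces $(W)_j(\tau_1)=M^{(K)}$. Comparing $\mathcal{W}(\tau_1)$ with $\lim_{t\to\tau_1^+}\mathcal{W}(t)$ (continuity of $\mathbf{u}(t),\mathbf{v}(t)$ lets me pass the limit inside $W_{Y^C}$), transferring $j$ between the $n_t M^{(K)}$ bucket and the $W_{Y^C}$ sum alters the former by $\pm M^{(K)}$ and the latter by $\mp(W)_j(\tau_1)=\mp M^{(K)}$, so the two changes cancel and $\mathcal{W}(\tau_1)=\lim_{t\to\tau_1^+}\mathcal{W}(t)$. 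Summing over all compartments that cross simultaneously (each contributing a net change of zero) gives right-continuity at $\tau_1$, and the identical computation at $t\to\tau_2^-$ gives left-continuity at $\tau_2$.

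Combining the two parts then delivers $\mathcal{W}(\tau_2)=\lim_{t\to\tau_2^-}\mathcal{W}(t)\le\lim_{t\to\tau_1^+}\mathcal{W}(t)=\mathcal{W}(\tau_1)$, which is exactly (\ref{eq:Wt1-less-than-Wt2}). The only subtlety worth flagging is the direction of each endpoint crossing (into versus out of $\Omega_K$), but since the cancellation relies solely on the boundary identity $(W)_j=M^{(K)}$ and not on the sign of the change, the argument is uniform across all cases and no case analysis is really needed.
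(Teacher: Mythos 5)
Your proposal is correct and is essentially the paper's argument: the paper likewise applies Corollary~\ref{cor:solution-to-LLF-equation-nonincreasing} on the crossing-free open interval and then absorbs the endpoint jumps via the counting identities $W_{Y^C(\tau_1)}(\mathbf{u}(\tau_1),\mathbf{v}(\tau_1))=W_{Y^C(t)}(\mathbf{u}(\tau_1),\mathbf{v}(\tau_1))+n_{in}M^{(K)}$, $W_{Y^C(\tau_2)}(\mathbf{u}(\tau_2),\mathbf{v}(\tau_2))=W_{Y^C(t)}(\mathbf{u}(\tau_2),\mathbf{v}(\tau_2))+n_{out}M^{(K)}$, and $n_{\tau_2}=n_{\tau_1}+n_{in}-n_{out}$, which rest on exactly your boundary identity $(W)_j=M^{(K)}$ at a crossing. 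Your one-sided-continuity phrasing of $\mathcal{W}$ at $\tau_1$ and $\tau_2$ is just a repackaging of that same cancellation, so there is no substantive difference.
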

\begin{proof}
We will first show how $W_{Y^{C}(\tau_{1})}(\mathbf{u}(\tau_{1}),\mathbf{v}(\tau_{1}))$
and $W_{Y^{C}(\tau_{2})}(\mathbf{u}(\tau_{2}),\mathbf{v}(\tau_{2}))$
are related. Notice that $Y(t)$ and $Y^{C}(t)$ do not have changes
in membership for all $t\in\left(\tau_{1},\tau_{2}\right)$ since
no crossings of $\partial\Omega_{K}$ occur. Pick $t\in(\tau_{1},\tau_{2})$.
Corollary~\ref{cor:solution-to-LLF-equation-nonincreasing} along
with the continuity of $\mathbf{u}$ and $\mathbf{v}$ imply that
\begin{equation}
W_{Y^{C}(t)}(\mathbf{u}(\tau_{2}),\mathbf{v}(\tau_{2}))\le W_{Y^{C}(t)}(\mathbf{u}(\tau_{1}),\mathbf{v}(\tau_{1})).\label{eq:W-inequality-at-edges}
\end{equation}
Let $n_{in}$ denote the number of compartments that cross into $\Omega_{K}$
at time $\tau_{1}$, and let $n_{out}$ denote the number of compartments
that cross out of $\Omega_{K}$ at time $\tau_{2}.$ We have that
\[
W_{Y^{C}\left(\tau_{1}\right)}\left(\mathbf{u}\left(\tau_{1}\right),\mathbf{v}\left(\tau_{1}\right)\right)=W_{Y^{C}(t)}\left(\mathbf{u}\left(\tau_{1}\right),\mathbf{v}\left(\tau_{1}\right)\right)+n_{in}M^{(K)}
\]
and 
\[
W_{Y^{C}\left(\tau_{2}\right)}\left(\mathbf{u}\left(\tau_{2}\right),\mathbf{v}\left(\tau_{2}\right)\right)=W_{Y^{C}(t)}\left(\mathbf{u}\left(\tau_{2}\right),\mathbf{v}\left(\tau_{2}\right)\right)+n_{out}M^{(K)}
\]
Using these relations and (\ref{eq:W-inequality-at-edges}), we deduce
the following inequality: 
\[
W_{Y^{C}\left(\tau_{2}\right)}\left(\mathbf{u}\left(\tau_{2}\right),\mathbf{v}\left(\tau_{2}\right)\right)\le W_{Y^{C}\left(\tau_{1}\right)}\left(\mathbf{u}\left(\tau_{1}\right),\mathbf{v}\left(\tau_{1}\right)\right)+(n_{out}-n_{in})M^{(K)}.
\]

Using (\ref{eq:mathcalW}) and $n_{\tau_{2}}=n_{\tau_{1}}+n_{in}-n_{out}$
gives us the final result since 
\[
\begin{aligned}\mathcal{W}\left(\tau_{2}\right) & =\left(n_{\tau_{1}}+n_{in}-n_{out}\right)M^{(K)}+W_{Y^{C}(\tau_{2})}\left(\mathbf{u}\left(\tau_{2}\right),\mathbf{v}\left(\tau_{2}\right)\right)\\
 & \le n_{\tau_{1}}M^{(K)}+W_{Y^{C}(\tau_{1})}\left(\mathbf{u}\left(\tau_{1}\right),\mathbf{v}\left(\tau_{1}\right)\right)\\
 & \le\mathcal{W}\left(\tau_{1}\right).
\end{aligned}
\]
\end{proof}

Finally, we will prove our main result by considering how the system
evolves for all time $t\in[0,T_{max})$.
\begin{theorem}
\label{thm:mainresult} Consider the ODE system given by (\ref{eq:sysDisc}),
and suppose there exists a LLF $W:\mathbb{R}_{\ge0}^{2}\rightarrow\mathbb{R}$
for this system Then, there exists an upper bound $B>0$ such that
$\|(u_{i}(t),v_{i}(t))\|\le B$ for $i=1,2,...,n$ and all $t\in[0,\infty)$. 
\end{theorem}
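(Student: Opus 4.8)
The plan is to bound the \emph{System LLF} $W_{\mathcal{N}}(t)=\sum_{i=1}^{n}(W)_i$ uniformly in time and then use radial unboundedness \ref{prop:radially-unbounded} to convert this into a bound on each $\|(u_i,v_i)\|$. Since every $Y$-compartment satisfies $(W)_i<M^{(K)}$, we have $W_{\mathcal{N}}(t)\le\mathcal{W}(t)$ for all $t$, where $\mathcal{W}$ is the function defined in (\ref{eq:mathcalW}); so it suffices to show $\mathcal{W}(t)$ is bounded on $[0,T_{max})$. The whole argument is organized around the continuous quantity $\phi(t):=\max_{i}\|(u_i(t),v_i(t))\|$, which is either above or below the threshold $\underline{C}$ of Definition~\ref{def:C-thres}.

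First I would record the bound that holds whenever $\phi$ is small. Setting $\overline{W}:=\max_{\|(u,v)\|\le\underline{C}}W(u,v)$ (finite, since $W$ is continuous and the set is compact), any time $\phi(t)\le\underline{C}$ every compartment contributes at most $\overline{W}$, so $\mathcal{W}(t)\le nM^{(K)}+n\overline{W}=:B_{\mathcal{W}}$. Because $\underline{C}\ge\|(u_{i,0},v_{i,0})\|$ by Definition~\ref{def:C-thres}, this already gives $\mathcal{W}(0)\le B_{\mathcal{W}}$. To bound $\mathcal{W}(T)$ at an arbitrary $T\in(0,T_{max})$, I would set $t_0:=\sup\{t\in[0,T]:\phi(t)\le\underline{C}\}$. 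This set is nonempty (it contains $0$) and closed by continuity of $\phi$, so $t_0$ lies in it and $\mathcal{W}(t_0)\le B_{\mathcal{W}}$. If $t_0=T$ we are done; otherwise $\phi\ge\underline{C}$ throughout $[t_0,T]$, so Lemma~\ref{lemma:W-decreasing}, applied successively across the crossings of $\partial\Omega_K$ in $[t_0,T]$, yields $\mathcal{W}(T)\le\mathcal{W}(t_0)\le B_{\mathcal{W}}$. Hence $\mathcal{W}(t)\le B_{\mathcal{W}}$ for every $t\in[0,T_{max})$.

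With the System LLF bounded, $0\le(W)_i\le W_{\mathcal{N}}\le\mathcal{W}\le B_{\mathcal{W}}$ for each $i$ and all $t$, so $W(u_i(t),v_i(t))\le B_{\mathcal{W}}$. By \ref{prop:radially-unbounded} the sublevel set $\{(u,v)\in\mathbb{R}_{\ge0}^{2}:W(u,v)\le B_{\mathcal{W}}\}$ is bounded, so there is a $B>0$ with $\|(u_i(t),v_i(t))\|\le B$ for all $i$ and all $t\in[0,T_{max})$. Finally, since the solution remains bounded, the blow-up alternative noted after (\ref{eq:sysDisc}) forces $T_{max}=\infty$, and the bound holds on all of $[0,\infty)$.

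The main obstacle I anticipate is the chaining of Lemma~\ref{lemma:W-decreasing} over $[t_0,T]$ across crossings of $\partial\Omega_K$: the lemma controls only a single crossing-free interval (with a crossing permitted at each endpoint), so one must glue the pieces into a global monotonicity statement for $\mathcal{W}$. This is immediate when $[t_0,T]$ contains finitely many crossings; the one delicate point is a possible accumulation of crossings. The resolution is to observe that $\mathcal{W}$ is continuous across every crossing — as a compartment meets $\partial\Omega_K$ its value equals $M^{(K)}$, so the jump of $n_tM^{(K)}$ exactly cancels the jump of $W_{Y^{C}}$ — so that $\mathcal{W}$ is a continuous function that is non-increasing on each crossing-free subinterval by Corollary~\ref{cor:solution-to-LLF-equation-nonincreasing}, which is enough to conclude $\mathcal{W}(T)\le\mathcal{W}(t_0)$.
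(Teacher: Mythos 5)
Your proposal is correct and follows essentially the same route as the paper: it bounds $\mathcal{W}(t)$ from (\ref{eq:mathcalW}) by chaining Lemma~\ref{lemma:W-decreasing} over excursions above the threshold $\underline{C}$ of Definition~\ref{def:C-thres}, anchors the bound at a time when every compartment lies below $\underline{C}$, converts the System-LLF bound into a concentration bound via \ref{prop:radially-unbounded}, and invokes the blow-up alternative to get $T_{max}=\infty$. If anything, your sup-based choice of $t_{0}$ and the observation that $\mathcal{W}$ is continuous across crossings of $\partial\Omega_{K}$ treat the chaining more carefully than the paper, which asserts by continuity alone that $T^{(M)}$ has finitely many components and that each contains finitely many crossings.
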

\begin{proof}
Pick $t\in[0,T_{max})$. Define $T^{(M)}$ as follows:
\[
T^{(M)}=\left\{ t\in[0,T_{max})\mid\max_{i=1,..,n}\left\{ \|(u_{i}(t),v_{i}(t))\|\right\} \ge\underline{C}\right\} 
\]
where $\underline{C}$ is given by Definition~\ref{def:C-thres}.
By the continuity of $u_{i}(t)$ and $v_{i}(t)$ we know that $T^{(M)}$
is a finite union of closed connected sets where 
\[
T^{(M)}=\bigcup_{i=1}^{m}T_{i}^{(M)}.
\]

Pick $i=1,2,..,m$ and suppose there are $J$ crossings in $T_{i}^{(M)}$.
Again, by the continuity of $u_{i}(t)$ and $v_{i}(t)$ we know that
the number of crossings is finite. Let $t_{j}$, for $j=1,2,...,J$,
denote the time at which crossings of $\partial\Omega_{K}$ occur
and let $t_{0}$ and $t_{J+1}$ denote the start and end of the time
interval, respectively (i.e., $T_{i}^{(M)}=[t_{0},t_{J+1}]$). We
then have that 
\[
T_{i}^{(M)}=\bigcup_{j=0}^{J}[t_{j},t_{j+1}].
\]

Pick $j=0,1,...,J$ and pick $\tau\in[j,j+1]$. Apply Lemma~\ref{lemma:W-decreasing}
to show 
\[
\begin{aligned}W_{\mathcal{\mathcal{N}}}(\mathbf{u}(\tau),\mathbf{v}(\tau)) & \le\mathcal{W}(\tau)\le\mathcal{W}(t_{j})\le\mathcal{W}(t_{j-1})\le...\\
 & \le\mathcal{W}(t_{0})=n_{t_{0}}M^{(K)}+W_{Y^{C}(t_{0})}(\mathbf{u}(t_{0}),\mathbf{v}(t_{0}))
\end{aligned}
\]
From the definition of $t_{0}$, we know for all $i$, $\|(u_{i}(t_{0}),v_{i}(t_{0}))\|\le\underline{C}$,
and, therefore, $W(u_{i}(t_{0}),v_{i}(t_{0}))\le M^{(\underline{C})}$.
This follows from \ref{prop:convex}, \ref{secondary-prop:K-exists},
and the fact that $\underline{C}>K$ . The final bound we obtain is
\begin{equation}
W_{\mathcal{N}}(\mathbf{u}(\tau),\mathbf{v}(\tau))\le n_{t_{0}}M^{(K)}+(n-n_{t_{0}})M^{(\underline{C})}\le nM^{(\underline{C})}.\label{eq:system-LLF-bounded}
\end{equation}
Note that since $j$ and $i$ were arbitrary the same bound holds
for all $\tau\in T^{(M)}$. Furthermore, for $\tau\le t$ such that
$\tau\notin T^{(M)}$ we know that $W(u_{i}(t_{0}),v_{i}(t_{0}))\le M^{(\underline{C})}$
for $i=1,2,...,n.$ Thus, the bound given by (\ref{eq:system-LLF-bounded})
still holds. Finally, since $t$ was arbitrary this bound holds for
all $t\in[0,T_{max})$.

Define $\Omega=\{(u,v)\mid W(u,v)\le nM^{(\underline{C})}\}$. By
(\ref{eq:system-LLF-bounded}) $(u_{i}(t),v_{i}(t))\in\Omega$ for
all $i\in\{1,2,..,n\}$ and $t\in[0,T_{max})$. This implies that
the total species concentration in each compartment is bounded by
$B$ where 
\[
B=\max_{(u,v)\in\Omega}\|(u,v)\|
\]
and $T_{max}=\infty$.
\end{proof}

\section{Applications: General results and example systems\label{sec:illustative-examples}}

Here, we discuss how to apply the boundedness results from Section
\ref{sec:boundedness-theorems}. We first present some general rules
that can be used to help determine whether an LLF exists for a specific
reaction set. We then present three example systems that illustrate
how diffusion-driven instability can lead to both bounded and unbounded
solutions. These examples illustrate that, although diffusion-driven
blow up can occur in the spatially-discretized system, Theorem \ref{thm:mainresult}
can be applied to find systems for which this is not the case.

\subsection{General rules for determining whether a LLF exists\label{subsec:LLF-existence}}

In some cases it is possible to quickly find a candidate LLF or to
show an LLF cannot exist. If a known Lyapunov function exists for
the reactions, it may also satisfy the requirements of a LLF. By definition,
any global Lyapunov function satisfies \ref{prop:decreases-with-time}
and \ref{prop:radially-unbounded}. Therefore, it remains to show
that a specific Lyapunov function satisfies \ref{prop:convex}, \ref{prop:level-set-tangent-lines},
and \ref{prop:finite-or-infinite-limits}. In the following corollary,
we show that if the Lyapunov function is additively separable and
satisfies \ref{prop:convex} then \ref{prop:level-set-tangent-lines}
and \ref{prop:finite-or-infinite-limits} follow.
\begin{corollary}
\label{cor:LF-is-LLF}Let $W:\mathbb{R}_{\ge0}^{2}\rightarrow\mathbb{R}_{\ge0}$
by a Lyapunov function for the reactions $f(u,v)$ and $g(u,v)$.
If, in addition $W$ is additively separable (i.e., $W(u,v)=w_{1}(u)+w_{2}(v)$)
and satisfies \ref{prop:convex}, then $W$ is a LLF.
\end{corollary}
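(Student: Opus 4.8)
The plan is to check the five defining properties of a LLF one at a time. Two of them are free: because $W$ is a Lyapunov function, \ref{prop:decreases-with-time} and \ref{prop:radially-unbounded} hold automatically (as noted in the discussion preceding the corollary), while \ref{prop:convex} is a standing hypothesis. The real content is therefore to deduce \ref{prop:level-set-tangent-lines} and \ref{prop:finite-or-infinite-limits} from separability together with \ref{prop:convex}. The single observation that drives everything is that writing $W(u,v)=w_{1}(u)+w_{2}(v)$ forces $\partial_{u}W(u,v)=w_{1}'(u)$ to depend on $u$ alone, $\partial_{v}W(u,v)=w_{2}'(v)$ to depend on $v$ alone, and the mixed partial to vanish identically, $\partial_{uv}W\equiv 0$.

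For \ref{prop:level-set-tangent-lines} I would analyze the ratio $|\partial_{u}W(u,v)/\partial_{v}W(u,v)|=|w_{1}'(u)|/|w_{2}'(v)|$. With $u$ fixed the numerator is a constant, so the supremum over $v\ge\underline{v}$ is finite exactly when $|w_{2}'|$ is bounded away from zero on $[\underline{v},\infty)$. Property \ref{prop:convex} gives $w_{2}''>0$, hence $w_{2}'$ is strictly increasing, and the secondary property \ref{secondary-prop:u-v-underbar-exist} gives $w_{2}'(\underline{v})>0$; together these yield $w_{2}'(v)\ge w_{2}'(\underline{v})>0$ for every $v\ge\underline{v}$, so the supremum is at most $|w_{1}'(u)|/w_{2}'(\underline{v})<\infty$. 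The second inequality of \ref{prop:level-set-tangent-lines} follows by the symmetric argument with $\underline{u}$ and $w_{1}'$.

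For \ref{prop:finite-or-infinite-limits} the vanishing mixed partial does almost all the work. By \ref{prop:convex} each limit $M_{u}^{(\infty)}(v)=\lim_{u\to\infty}w_{1}'(u)$ and $M_{v}^{(\infty)}(u)=\lim_{v\to\infty}w_{2}'(v)$ either converges to a finite value or diverges to $+\infty$. If $M_{u}^{(\infty)}(v)$ is finite I must also verify that $\lim_{u\to\infty}\partial_{uv}W(u,v)$ exists and is finite, but this is immediate since $\partial_{uv}W\equiv 0$ forces the limit value to be $0$; thus case (a) holds whenever the limit is finite and case (b) holds otherwise. The identical dichotomy handles $M_{v}^{(\infty)}(u)$, so \ref{prop:finite-or-infinite-limits} holds for every $u$ and $v$, completing the verification that $W$ is a LLF.

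I do not expect a genuine obstacle here — the whole point of restricting to additively separable Lyapunov functions is to render \ref{prop:level-set-tangent-lines} and \ref{prop:finite-or-infinite-limits} automatic. The only step demanding any care is the bounded-away-from-zero claim inside \ref{prop:level-set-tangent-lines}, which rests entirely on the monotonicity of $w_{1}'$ and $w_{2}'$ supplied by strict convexity and on their positivity at $\underline{u}$, $\underline{v}$ guaranteed by \ref{secondary-prop:u-v-underbar-exist}.
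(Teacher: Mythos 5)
Your proof is correct and takes essentially the same route as the paper's: both reduce everything to the observation that separability gives $\partial_{u}W=w_{1}'(u)$, $\partial_{v}W=w_{2}'(v)$, and $\partial_{uv}W\equiv0$, then bound the ratio in \ref{prop:level-set-tangent-lines} by $|w_{1}'(u)|/w_{2}'(\underline{v})$ using monotonicity from \ref{prop:convex}, and settle \ref{prop:finite-or-infinite-limits} via the monotone (finite or infinite) limits of $w_{1}'$, $w_{2}'$ together with the vanishing mixed partial. If anything, your explicit appeal to \ref{secondary-prop:u-v-underbar-exist} for $w_{2}'(\underline{v})>0$ makes precise a step the paper's displayed inequalities leave implicit.
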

The proof of this corollary is given in \ref{sec:General-rules-proofs}.
As an example application of Corollary~\ref{cor:LF-is-LLF}, consider
a Lyapunov function consisting of a positive monomial for $u$ and
$v$ with degree $\ge2$ (i.e., $W(u,v)=\alpha u^{m}+\beta v^{p}$
where $\alpha,\beta\in\mathbb{R_{+}}$ and $m,p=2,3,...$). It follows
immediately from Corollary \ref{cor:LF-is-LLF} that $W$ is a LLF.

In some cases, it is possible to quickly determine that no LLF exists
for a system. In the next corollary, we present conditions on the
reactions $f$ and $g$ in (\ref{eq:sysDisc}) that guarantee a LLF
for the system cannot be found.
\begin{corollary}
\label{cor:no-LLF-exists}Consider the system given by (\ref{eq:sysDisc}).
If for any $v\ge0$ the following conditions are satisfied,
\begin{equation}
\begin{aligned}\liminf_{u\rightarrow\infty}f(u,v) & >0,\hspace{1em}\hspace{1em} & \lim_{u\rightarrow\infty}\left|\frac{f(u,v)}{g(u,v)}\right| & =\infty,\end{aligned}
\label{eq:f-positive}
\end{equation}
then there does not exist an LLF for the system. Analogously, if for
any $u\ge0$ the following conditions are satisfied,
\begin{equation}
\begin{aligned}\liminf_{v\rightarrow\infty}g(u,v) & >0,\hspace{1em}\hspace{1em} & \lim_{v\rightarrow\infty}\left|\frac{g(u,v)}{f(u,v)}\right| & =\infty,\end{aligned}
\label{eq:g-positive}
\end{equation}
then there does not exist an LLF for the system.
\end{corollary}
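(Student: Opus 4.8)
The plan is to argue by contradiction: assume a LLF $W$ exists, fix a value of $v\ge0$ satisfying the two conditions in (\ref{eq:f-positive}), and show that along the horizontal ray $u\mapsto(u,v)$ the decrease property \ref{prop:decreases-with-time} is eventually violated. The geometric picture driving the proof is that the hypotheses force the reaction vector $(f,g)$ to point asymptotically in the $+u$ direction as $u\to\infty$ (because $f$ is eventually positive and $|g/f|\to0$), while \ref{prop:level-set-tangent-lines} together with \ref{secondary-prop:u-v-underbar-exist} guarantee that $\nabla W$ keeps a strictly positive and non-negligible $u$-component along this ray; consequently the inner product $\nabla W\cdot(f,g)$ must eventually be positive, contradicting \ref{prop:decreases-with-time}.

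To make this quantitative I would fix the chosen $v$ and restrict to $u$ large enough that $u\ge\underline{u}$, $\|(u,v)\|\ge\underline{K}$, and $f(u,v)>0$; the last requirement holds for all large $u$ since $\liminf_{u\to\infty}f(u,v)>0$. For such $u$, \ref{secondary-prop:u-v-underbar-exist} gives $\partial_u W(u,v)>0$, and \ref{prop:level-set-tangent-lines} furnishes a finite constant $R:=\sup_{u\ge\underline{u}}|\partial_v W(u,v)/\partial_u W(u,v)|$, so that $|\partial_v W(u,v)|\le R\,\partial_u W(u,v)$. Writing \ref{prop:decreases-with-time} in the form $\partial_u W\,f+\partial_v W\,g\le0$ and estimating $-\partial_v W\,g\le|\partial_v W|\,|g|\le R\,\partial_u W\,|g|$, I obtain $\partial_u W(u,v)\,f(u,v)\le R\,\partial_u W(u,v)\,|g(u,v)|$. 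Dividing through by the strictly positive quantities $\partial_u W(u,v)$ and $f(u,v)$ yields the clean inequality $1\le R\,|g(u,v)/f(u,v)|$.

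The contradiction is then immediate: letting $u\to\infty$, the second condition in (\ref{eq:f-positive}) gives $|g(u,v)/f(u,v)|\to0$, so the right-hand side tends to $0$ while the left-hand side is $1$. Hence no LLF can exist. The analogous statement under (\ref{eq:g-positive}) follows verbatim after interchanging the roles of $u$ and $v$ (and of $f$ and $g$), using the $\underline{v}$ branch of \ref{secondary-prop:u-v-underbar-exist} and the first half of \ref{prop:level-set-tangent-lines}. I expect the only delicate point to be the sign and regime bookkeeping — ensuring $\partial_u W>0$ and $f>0$ so that both divisions preserve the inequality, and checking that the requirements $u\ge\underline{u}$, $u+v\ge\underline{K}$, and $f>0$ can be met simultaneously (they can, since each holds for all sufficiently large $u$). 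The estimate itself is short once this setup is in place.
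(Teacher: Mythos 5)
Your proposal is correct and follows essentially the same route as the paper's proof: both fix a $v$ satisfying (\ref{eq:f-positive}), use \ref{prop:decreases-with-time} together with $\partial_u W>0$ (from \ref{secondary-prop:u-v-underbar-exist}) and $f>0$ for large $u$ to bound $\bigl|\partial_v W/\partial_u W\bigr|$ against $\bigl|f/g\bigr|$, and derive a contradiction with \ref{prop:level-set-tangent-lines} as $u\to\infty$. The only difference is cosmetic --- you invoke the finiteness of the supremum $R$ first and conclude $1\le R\,|g/f|\to 0$, which neatly sidesteps the paper's explicit case analysis on the sign of $g$, but the ingredients and the contradiction are identical.
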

The proof of this corollary is given in \ref{sec:General-rules-proofs}.

\subsection{Example bounded and unbounded discretized RD systems\label{subsec:example-systems}}

We consider three example systems. The first is a set of chemical
reactions that is biologically realistic \cite{Murray2002}, the second
is a set of strongly mutualistic populations in ecology \cite{Lou2001},
and the third, although more abstract, demonstrates that diffusion-driven
blow up can occur when the reaction-only system has a globally stable
steady-state \cite{Weinberger1999}. 

Consider the following set of chemical reactions involving the species
$U$ and $V$, studied in \cite{Schnakenberg1979,Murray2002,Murray2003}:

\begin{align}
	\ce{A ->[k_1] U}, \quad
	\ce{B ->[k_2] V}, \quad
	\ce{2U + V ->[k_3] 3U}, \quad
	\ce{U ->[k_4] \emptyset}
\end{align}Here, $A$ and $B$ are positive source terms and the $k_{i}$'s represent
kinetic constants. If we make the system nondimensional the following
reactions are obtained
\begin{equation}
\begin{aligned}f(u,v) & =a-u+u^{2}v\\
g(u,v) & =b-u^{2}v
\end{aligned}
\label{eq:reactions-bounded}
\end{equation}
where $a$ and $b$ are positive constants (see \cite{Murray2002}).
Since $f$ and $g$ are continuously differentiable and $f(0,v),g(u,0)\ge0$,
these reactions satisfy the requirements stated following (\ref{eq:sysDisc}).
Without diffusion, there is either a periodic solution or a stable
steady state, implying the reaction-only system is bounded. The continuous
RD system, however, exhibits diffusion-driven instability \cite{Murray2002}. 

Let's consider the following LLF candidate 
\begin{equation}
W(u,v)=u+2v+\frac{c}{u+1}+\frac{1}{v+1}\label{eq:example-LLF}
\end{equation}
where $c>0$ is a constant that depends on the parameters $a$ and
$b$. This function has the desired properties, \ref{prop:decreases-with-time}--\ref{prop:finite-or-infinite-limits}
(see Appendix \ref{app-sec:exampleSystemBounded} for proofs). Therefore, by
Theorem~\ref{thm:mainresult}, the discretized system given by (\ref{eq:sysDisc})
is bounded for all time. These results are confirmed numerically for
a two-compartment system (Figure~\ref{fig:example-phase-planes},
left panel).

\begin{figure}
\begin{centering}
\includegraphics[clip,scale=0.7]{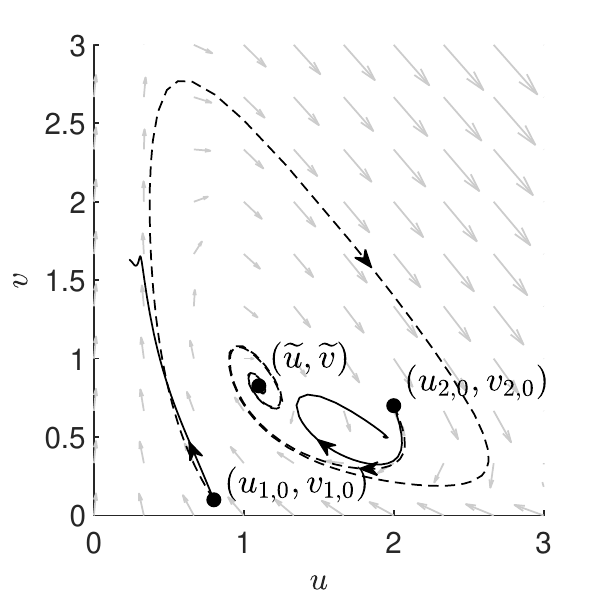}\includegraphics[scale=0.7]{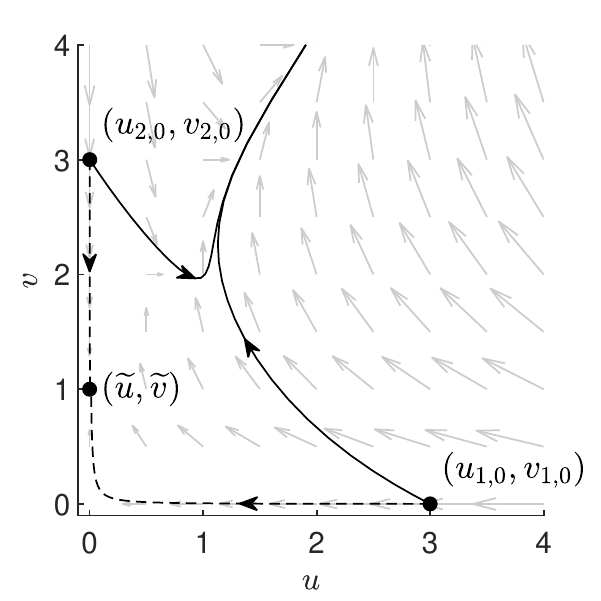}\includegraphics[scale=0.7]{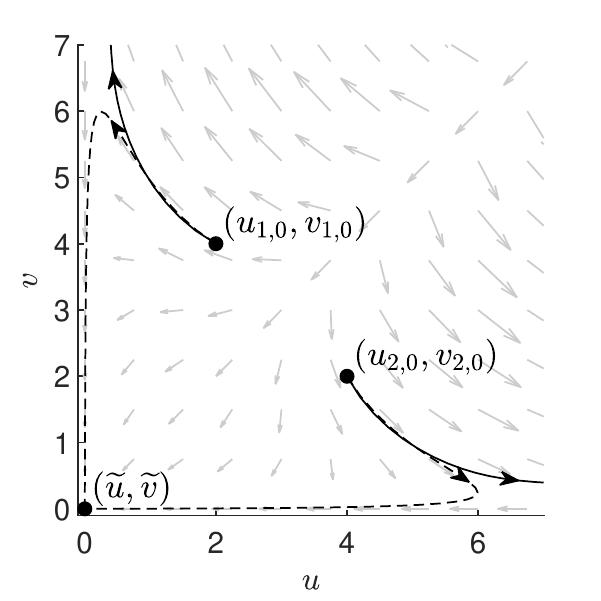}
\par\end{centering}
\caption{Example trajectories for two compartment systems where diffusion alters
the steady state and leads to bounded (left panel) and unbounded (middle
and right panel) solutions. The solid and dashed lines show the evolution
of the species concentration in each compartment with and without
diffusion, respectively. The gray arrows represent the vector field
$(f,g)$ given by (\ref{eq:reactions-bounded}), (\ref{eq:reactions-mutual}),
and (\ref{eq:reactions-unbounded}) for the left, middle, and right
panels, respectively. For complete details on simulations see Appendix \ref{sec:simulation-info}.
\label{fig:example-phase-planes}}
\end{figure}

The second example of a strongly mutualistic population has the following
reactions \cite{Lou2001}
\begin{equation}
\begin{aligned}f(u,v) & \begin{aligned}=u(a_{1}-b_{1}u+c_{1}v)\end{aligned}
\\
g(u,v) & =v(a_{2}+b_{2}u-c_{2}v)
\end{aligned}
\label{eq:reactions-mutual}
\end{equation}
where $a_{1},a_{2}\in\mathbb{R}$ and $b_{1},b_{2},c_{1},c_{2}\in\mathbb{R_{+}}$.
Here $a_{1}$ and $a_{2}$ represent growth or death rates of each
species, $c_{1}$ and $b_{2}$ represent the positive mutual interaction
between the two species, and the $b_{1}$ and $c_{2}$ represent growth
limitations a species exerts on itself \cite{Pao1993}. For strong
mutualism we require $b_{2}c_{1}>b_{1}c_{2}.$ For some initial conditions
the reaction-only system approaches a stable steady state while the
spatially continuous RD system has unbounded solutions (see Proposition
1.1 and Theorem 1.2 in \cite{Lou2001}). A LLF does not exist for
this system (see Appendix \ref{sm-subsec:noLLFforExample} for proof) which
suggests the discretized RD system becomes unbounded. Numerical simulations
confirm this result (Figure \ref{fig:example-phase-planes}, middle
panel).

The third and final example we will discuss was first introduced by
\cite{Weinberger1999}. The reactions are given as
\begin{equation}
\begin{aligned}f(u,v) & =uv(u-v)(u+1)-\delta u\\
g(u,v) & =uv(v-u)(v+1)-\delta v
\end{aligned}
\label{eq:reactions-unbounded}
\end{equation}
where $\delta$ is a positive constant. Again, these reactions satisfy
the stated requirements in Section~\ref{sec:notation-and-definitions}
since $f$ and $g$ are continuously differentiable and $f(0,v),g(u,0)\ge0$.
This system has a single steady state at the origin and a corresponding
global Lyapunov function, 
\[
V(u,v)=(u+1)^{2}(v+1)^{2}.
\]
This existence of this Lyapunov function proves that the steady state
in the reaction-only system is globally stable. 

For this system the conditions of Corollary~\ref{cor:no-LLF-exists}
are satisfied and therefore an LLF does not exist (see Appendix \ref{sm-subsec:noLLFforExample}).
Again however, this does not imply that the system is unbounded. Simulations
of the two compartment system suggest that the system becomes unbounded
as $t\rightarrow\infty$ (Figure \ref{fig:example-phase-planes},
right panel), and in Appendix \ref{app-subsec:exampleUnboundedProof} we prove
this result for small $\delta$.

\section{Discussion}

\label{sec:discussion}

We have defined a class of spatially discretized RD systems with a
uniform boundedness property. We looked specifically at systems with
two species reacting and diffusing on a 1D domain with homogeneous
Neumann boundary conditions and guaranteed positivity of solutions.
This RD system must additionally have a Lyapunov-like function (LLF)
as described by Definition~\ref{def:LLF}. Under these conditions,
we are guaranteed that the total concentration of species in the system
is bounded for all time. Notably, the existence of a LLF for a system
only depends on the reactions, and is therefore independent of the
domain size and diffusion rates of the two species in the system. 

The results presented here are generalizable to systems with reaction
parameters (i.e., parameters within the functions $f$ and $g$) and
diffusion parameters (i.e., $d$ and $\gamma$) that vary across space.
This generalization allows us to consider a broader range of systems.
For example, parameter values could follow spatial gradients or the
diffusion rate between two compartments could be altered to represent
a physical barrier, such as a membrane. In a system with spatially
varying reaction parameters, each individual compartment would have
its own parameter set. For example, in (\ref{eq:reactions-bounded})
the reactions occurring in compartment $i$ would have parameters
$a_{i}$ and $b_{i}$. If an LLF exists that satisfies \ref{prop:decreases-with-time}
in each spatial compartment, then our results can be generalized to
prove that the system is bounded. To allow diffusion to vary across
space, we would define a diffusion value of $u$ and $v$ across each
edge in the system. The result of this change would cause the flux
effect-terms given by (\ref{eq:Fbdyi}) and (\ref{eq:Finti}) to depend
on these edge-dependent diffusion values. In principle, the same logic
in the proofs would hold, where the bounds obtained would now depend
on the maximum and minimum values of the diffusion parameters. Rigorously
proving these result is a topic of future research.

Using a LLF to prove boundedness provides a method for examining any
mathematical description of a biological system. For example, biochemical
dynamics can be described mathematically using mass-action kinetics
\cite{Voit2015}, Hill Functions \cite{Goutelle2008}, and Michaelis-Menten
Kinetics \cite{Johnson2011}. We specifically showed how the results
can be applied to show boundedness in a system with mass-action kinetics
(see first example Section \ref{subsec:example-systems}). However,
for many biological systems it might be challenging to find a suitable
LLF. One solution to this challenge is to leverage computational work
that has been done to find Lyapunov functions \cite{Hafstein2015}.
As shown in Corollary~\ref{cor:LF-is-LLF}, a global Lyapunov function
for the reactions might satisfy the requirements for a LLF. In cases
where a global Lyapunov function does not lead to a suitable LLF,
it might be possible to prove an LLF does not exist (see third example
in Section~\ref{subsec:example-systems}). This suggests the system
has the potential to become unbounded.

A natural future question regarding this work is whether a system
that has a LLF remains bounded in the continuum limit. Here, the bound
obtained depends on the number of spatial compartments and, therefore,
the question of what occurs in the continuum limit is not yet answered.
Additionally, the conditions for boundedness of one type of system
(i.e., the discretized or continuous system) do not satisfy the conditions
to guarantee boundedness of the other system (see \cite{Morgan1990}
for conditions for the continuous system). Research looking at the
continuous system with Dirichlet boundary conditions has found examples
of systems that are bounded with respect to the $L^{1}$ norm but
blow up with respect to the $L^{\infty}$ norm \cite{Pierre2010}.
However, to the best of the author's knowledge, no examples of this
type have been found for systems with positive solutions and homogeneous
Neumann boundary conditions. This suggests that boundedness for the
class of RD systems discussed in the paper might be preserved in the
continuum limit. 

One motivation of this study was to find a class of RD systems that
could be studied using systems biology approaches. When considering
the discretized system it becomes feasible to apply existing systems
biology tools, such as \emph{stoichiometric network analysis} \cite{Clarke1988,Palsson2006,Gianchandani2010}
and \emph{chemical reaction network theory} \cite{Feinberg1979,Craciun2005,Craciun2006a},
to study spatially heterogeneous systems or systems with bounded diffusion-driven
instabilities. We have shown that the results presented here can be
applied to this type of system (see first example in Section \ref{subsec:example-systems}).
In the future we hope to use systems biology tools to study how spatial
features influence system properties (e.g., how does altering the
diffusion ratio between two species affect the the space of possible
reactive fluxes under steady-state conditions). Ultimately, the results
presented here will help us study diffusion-driven instabilities in
complex biochemical systems with variable diffusion and reaction rates.

\section*{Acknowledgments}

The authors would like to thank Prof. Nancy Rodriguez for insightful
discussions regarding this work. 

\bibliographystyle{siamplain}
\bibliography{Papers-BoundedRDSystem}

\begin{thebibliography}{10}

\bibitem{Allen1987}
{\sc L.~J.~S. Allen}, {\em {Persistence, extinction, and critical patch number
  for island populations}}, J. Math. Biol., 24 (1987), pp.~617--625.

\bibitem{Clarke1988}
{\sc B.~L. Clarke}, {\em {Stoichiometric network analysis}}, Cell Biophysics,
  12 (1988), pp.~237--253.

\bibitem{Craciun2005}
{\sc G.~Craciun and M.~Feinberg}, {\em {Multiple equilibria in complex chemical
  reaction networks: I. The injectivity property}}, SIAM J. Appl. Math, 65
  (2005), pp.~1526--1546.

\bibitem{Craciun2006a}
{\sc G.~Craciun and M.~Feinberg}, {\em {Multiple equilibria in complex chemical
  reaction networks: II. The species-reaction graph}}, SIAM J. Appl. Math, 66
  (2006), pp.~1321--1338.

\bibitem{Feinberg1979}
{\sc M.~Feinberg}, {\em {Lectures on chemical reaction networks.}}, Notes of
  lectures given at the Mathematics Research Center, University of Wisconsin,
  (1979), p.~49.

\bibitem{Fila2005}
{\sc M.~Fila and H.~Ninomiya}, {\em {Reaction versus diffusion: blow-up induced
  and inhibited by diffusivity}}, Russian Math. Surveys, 60 (2005),
  pp.~1217--1235.

\bibitem{Fitzgibbon1997}
{\sc W.~B. Fitzgibbon, S.~L. Hollis, and J.~J. Morgan}, {\em {Stability and
  Lyapunov functions for reaction-diffusion systems}}, SIAM J. Math. Anal., 28
  (1997), pp.~595--610.

\bibitem{Flather2002}
{\sc C.~H. Flather and M.~Bevers}, {\em {Patchy reaction-diffusion and
  population abundance: The relative importance of habitat amount and
  arrangement}}, American Naturalist, 159 (2002), pp.~40--56.

\bibitem{Gianchandani2010}
{\sc E.~P. Gianchandani, A.~K. Chavali, and J.~A. Papin}, {\em {The application
  of flux balance analysis in systems biology}}, Wiley Interdisciplinary
  Reviews: Systems Biology and Medicine, 2 (2010), pp.~372--382.

\bibitem{Goutelle2008}
{\sc S.~Goutelle, M.~Maurin, F.~Rougier, X.~Barbaut, L.~Bourguignon, M.~Ducher,
  and P.~Maire}, {\em {The Hill equation: a review of its capabilities in
  pharmacological modelling}}, Fundamental {\&} Clinical Pharmacology, 22
  (2008), pp.~633--648.

\bibitem{Hafstein2015}
{\sc S.~Hafstein and P.~Giesl}, {\em {Review on computational methods for
  Lyapunov functions}}, Discrete Contin. Dyn. Syst. Ser. B, 20 (2015),
  pp.~2291--2331.

\bibitem{Hollis1987}
{\sc S.~L. Hollis, R.~H. {Martin, Jr.}, and M.~Pierre}, {\em {Global existence
  and boundedness in reaction-diffusion systems}}, SIAM J. Appl. Math, 18
  (1987), pp.~744--761.

\bibitem{Johnson2011}
{\sc K.~A. Johnson and R.~S. Goody}, {\em {The original Michaelis constant:
  Translation of the 1913 Michaelis-Menten paper}}, Biochemistry, 50 (2011),
  pp.~8264--8269.

\bibitem{Lou2001}
{\sc Y.~Lou, T.~Nagylaki, and W.~M. Ni}, {\em {On diffusion-induced blowups in
  a mutualistic model}}, Nonlinear Anal., 45 (2001), pp.~329--342.

\bibitem{Marciniak-Czochra2016}
{\sc A.~Marciniak-Czochra, G.~Karch, K.~Suzuki, and J.~Zienkiewicz}, {\em
  {Diffusion-driven blowup of nonnegative solutions to reaction-diffusion-ODE
  systems}}, Differential Integral Equations, 29 (2016), pp.~7--8.

\bibitem{Melkemi2007}
{\sc L.~Melkemi, A.~Z. Mokrane, and A.~Youkana}, {\em {Boundedness and
  Large-Time Behavior Results for a Diffusive Epidemic Model}}, J. Appl. Math.,
   (2007), p.~15.

\bibitem{Morgan1990}
{\sc J.~Morgan}, {\em {Boundedness and decay results for reaction-diffusion
  systems}}, SIAM J. Math. Anal., 21 (1990), pp.~1172--1189.

\bibitem{Murray2002}
{\sc J.~D. Murray}, {\em {Mathematical Biology: I. An Introduction}},
  Springer-Verlag New York, 3~ed., 2002.

\bibitem{Murray2003}
{\sc J.~D. Murray}, {\em {Mathematical Biology: II. Spatial Models and
  Biomedial Applications}}, Springer-Verlag New York, 3~ed., 2003.

\bibitem{Palsson2006}
{\sc B.~Palsson}, {\em {Systems biology: properties of reconstructed
  networks}}, Cambridge and New York: Cambridge University Press, 2006.

\bibitem{Pao1993}
{\sc C.~V. Pao}, {\em {Nonlinear Parabolic and Elliptic Equations}}, Springer
  US, 1993.

\bibitem{Perthame2015}
{\sc B.~Perthame}, {\em {Parabolic Equations in Biology}}, Springer, 2015.

\bibitem{Pierre2010}
{\sc M.~Pierre}, {\em {Global Existence in Reaction-Diffusion Systems with
  Control of Mass: A Survey}}, Milan J. Math., 78 (2010), pp.~417--455.

\bibitem{Schnakenberg1979}
{\sc J.~Schnakenberg}, {\em {Simple chemical reaction systems with limit cycle
  behaviour}}, J. Theoret. Biol., 81 (1979), pp.~389--400.

\bibitem{Voit2015}
{\sc E.~O. Voit, H.~A. Martens, and S.~W. Omholt}, {\em {150 Years of the Mass
  Action Law}}, PLoS Computational Biology, 11 (2015), p.~e1004012.

\bibitem{Weinberger1999}
{\sc H.~F. Weinberger}, {\em {An example of blowup produced by equal
  diffusions}}, J. Differential Equations, 154 (1999), pp.~225--237.

\bibitem{Wentz2018a}
{\sc J.~M. Wentz, A.~R. Mendenhall, and D.~M. Bortz}, {\em {Pattern formation
  in the longevity-related expression of heat shock protein-16.2 in
  Caenorhabditis elegans}}, Bull. Math. Bio., 80 (2018), pp.~2669--2697.

\bibitem{Zecchin2015}
{\sc A.~Zecchin, P.~C. Stapor, J.~Goveia, and P.~Carmeliet}, {\em {Metabolic
  pathway compartmentalization: An underappreciated opportunity?}}, aug 2015.

\end{thebibliography}

\appendix

\section{Proofs for the secondary properties of the LLF\label{app-sec:proofOfC1andC2}}

In this section we will prove Corollary \ref{cor:secondary-properties-1}
and \ref{cor:secondary-properties-2}, which guarantee a LLF, $W:\mathbb{R}_{\ge0}^{2}\rightarrow\mathbb{R}_{\ge0}$,
has the additional properties given by \ref{secondary-prop:Mu-L-Mv-L-increasing}--\ref{secondary-prop:Mv-infinity-constant}. 
\begin{proof}
	[Proof of Corollary \ref{cor:secondary-properties-1} \ref{secondary-prop:Mu-L-Mv-L-increasing}]
	Pick $L_{1}$ and $L_{2}$ such that $L_{2}>L_{1}>0$. Using \ref{prop:convex},
	we have that
	\begin{align*}
	M_{u}^{(L_{1})} & =\max_{\|(u,v)\|=L_{1}}\partial_{u}W(u,v)\\
	& =\max_{v\in[0,L_{1}]}\partial_{u}W(L_{1}-v,v)\\
	& <\max_{u\in[0,L_{1}]}\partial_{u}W(L_{2}-v,v)\\
	& \le\max_{u\in[0,L_{2}]}\partial_{u}W(L_{2}-v,v)=M_{u}^{(L_{2})}.
	\end{align*}
	Here, the first inequality holds because $L_{2}>L_{1}$ and $\partial_{uu}W>0$.
	The second inequality holds because we are taking the maximum over
	a larger region. This result proves that $M_{u}^{(L)}$ is monotonically
	increasing. The result for $M_{v}^{(L)}$ follows analogously.
\end{proof}

\begin{proof}
	[Proof of Corollary \ref{cor:secondary-properties-1} \ref{secondary-prop:u-v-underbar-exist}]
	By \ref{prop:radially-unbounded} we know $W(u,0)\rightarrow\infty$
	as $u\rightarrow\infty$. It follows that there exists a constant
	$\text{\ensuremath{\underline{u}}}$ such that $W(\underline{u},0)>W(0,0).$
	Using the Mean Value Theorem and \ref{prop:convex}, we have that
	$\partial_{u}W(\underline{u},0)>0.$ Therefore, by \ref{prop:convex},
	$\partial_{u}W(u,v)>0$ for all $v\ge0$ and $u\ge\underline{u}$.
	The same logic can be used to prove there exists a constant $\underline{v}$
	such that $\partial_{v}W(u,v)>0$ for all $v\ge\text{\ensuremath{\underline{v}}}$
	and $u\ge0$.
\end{proof}

\begin{proof}
	[Proof of Corollary \ref{cor:secondary-properties-1} \ref{secondary-prop:K-exists}]
	Define
	\[
	\hat{M}:=\max_{\|(u,v)\|\le\text{\ensuremath{\underline{u}+\underline{v}}}}W(u,v).
	\]
	By \ref{prop:radially-unbounded} there exist constants $\widetilde{u},\widetilde{v}\in\mathbb{R}_{\ge0}$
	such that
	\begin{equation}
	\begin{aligned}\min_{v\in[0,\underline{v}]}W(u,v) & >\hat{M} &  & \text{for }u\ge\widetilde{u}\\
	\min_{u\in[0,\underline{u}]}W(u,v) & >\hat{M} &  & \text{for }v\ge\widetilde{v}.
	\end{aligned}
	\label{app-eq:mhat-bound}
	\end{equation}
	Note that by definition $\widetilde{u}>\underline{u}+\underline{v}$
	and $\widetilde{v}>\underline{u}+\underline{v}$.
	
	Let $K:=\max\{\underline{K},\widetilde{u}+\widetilde{v}$\} and recall
	that that $M^{(K)}=\max_{\|(u,v)\|=K}W(u,v)$. If $\|(u,v)\|=K$,
	then either $u\ge\widetilde{u}$ or $v\ge\widetilde{v}.$ We will
	show that $M^{(K)}>\hat{M}$ for $u\ge\widetilde{u}$, and the result
	for $v\ge\widetilde{v}$ follows analogously. If $u\ge\widetilde{u}$
	and $v\le\underline{v}$ then using (\ref{app-eq:mhat-bound}) we
	have that $W(u,v)>\hat{M}$. If instead, $v>\underline{v}$, then
	$\partial_{v}W$ is positive and $W(u,v)>W(u,\underline{v})>\hat{M}$.
	Thus, if $\|(u,v)\|=K$ then $W(u,v)>\hat{M}$, and therefore $M^{(K)}>\hat{M}$. 
	
	Next pick $L<K$, and we will prove the claim in the corollary that
	$M^{(L)}<M^{(K)}.$ If $L\le\underline{u}+\underline{v}$ we immediately
	have that $M^{(L)}\le\hat{M}<M^{(K)}$. Alternatively, if $L>\underline{u}+\underline{v}$,
	we have that
	\begin{align*}
	M^{(L)} & =\max_{\|(u,v)\|=L}W(u,v)=\max\left\{ \max_{u\in[0,\underline{u}]}W(u,L-u),\max_{v\in[0,L-\underline{u}]}W(L-v,v)\right\} \\
	& <\max\left\{ \max_{u\in[0,\underline{u}]}W(u,K-u),\max_{v\in[0,L-\underline{u}]}W(K-v,v)\right\} \\
	& \le\max\left\{ \max_{u\in[0,\underline{u}]}W(u,K-u),\max_{v\in[0,K-\underline{u}]}W(K-v,v)\right\} \\
	& =\max_{\|(u,v)\|=K}W(u,v)\\
	& =M^{(K)}.
	\end{align*}
	In this calculation we are breaking apart the the line $\|(u,v)\|=L$
	into two regions. In the region where $u\in[0,\underline{u}]$ we
	are guaranteed that $v>\underline{v}$ and thus, $\partial_{v}W>0$.
	In the other region where $v\in[0,L-\underline{u}]$ we are guaranteed
	that $u>\underline{u}$ and thus $\partial_{u}W>0.$ This leads to
	the first inequality. The second inequality follows because we are
	taking the maximum over a larger region. Note that by definition $K\ge\text{\ensuremath{\underbar{\ensuremath{K}}}}$,
	$\underbar{\ensuremath{u}}$, $\underbar{\ensuremath{v}}$.
\end{proof}

\noindent 
\begin{proof}
	[Proof of Corollary \ref{cor:secondary-properties-2}]We will show
	the proof for \ref{secondary-prop:Mu-infinity-constant}. The proof
	for \ref{secondary-prop:Mv-infinity-constant} follows analogously.
	Recall that
	\begin{equation}
	M_{u}^{(\infty)}(v)=\lim_{u\rightarrow\infty}\partial_{u}W(u,v).\label{app-eq:Mu-infinity-v}
	\end{equation}
	By \ref{prop:convex}, $\partial_{uu}W>0$ and, therefore, $\partial_{u}W(u,v)$
	is monotonically increasing with respect to $u$. This means that
	for a given $v$ the limit given by (\ref{app-eq:Mu-infinity-v})
	either converges and exists or diverges to infinity. Furthermore,
	since $\partial_{uv}W(u,v)\ge0$ we know that $M_{u}^{(\infty)}(v)$
	must be monotonically non-decreasing with respect to $v$. Note that
	if for any $v\in[0,\infty)$, $M_{u}^{(\infty)}(v)=\infty$, then
	by \ref{prop:finite-or-infinite-limits}, $M_{u}^{(\infty)}(v)=\infty$
	for all $v$, and the conclusions of the corollary follow. Therefore,
	in the remainder of the proof we will assume that $M_{u}^{(\infty)}(v)$
	is finite for all $v\in[0,\infty).$ By \ref{secondary-prop:u-v-underbar-exist},
	we then have that $M_{u}^{(\infty)}(v)>0$.
	
	First, we will show that $h(v):=\lim_{u\rightarrow\infty}\partial_{uv}W(u,v)=0$.
	By \ref{prop:convex} and \ref{prop:finite-or-infinite-limits} we
	know $h(v)$ exists and is non-negative. This implies that there exists
	a constant $U>0$ such that if $u>U$ then $\partial_{uv}W(u,v)>h(v)/2$.
	We then have that
	
	\begin{align}
	\int_{U}^{u}\partial_{uv}W(\widetilde{u},v)d\widetilde{u} & \ge\int_{U}^{u}\frac{h(v)}{2}d\widetilde{u}\nonumber \\
	\implies\partial_{v}W & \ge\frac{h(v)}{2}u+C\label{app-eq:hv-bound}
	\end{align}
	where $C$ is a constant. Note that $\limsup_{u\rightarrow\infty}\partial_{v}W$
	must be bounded for any $v\in[0,\infty)$ since 
	\[
	\sup_{u\ge\text{\ensuremath{\underbar{\ensuremath{u}}}}}\left|\frac{\partial_{v}W(u,v)}{\partial_{u}W(u,v)}\right|\ge\limsup_{u\rightarrow\infty}\left|\frac{\partial_{v}W(u,v)}{\partial_{u}W(u,v)}\right|=\frac{\limsup_{u\rightarrow\infty}\left|\partial_{v}W(u,v)\right|}{M_{u}^{(\infty)}(v)}.
	\]
	and by \ref{prop:level-set-tangent-lines} the supremum is finite.
	Taking the limsup of both sides of (\ref{app-eq:hv-bound}) as $u\rightarrow\infty$
	shows that this bound would not hold if $h(v)>0$. Therefore, $h(v)=0$
	for all $v\in[0,\infty).$
	
	Our next goal is to show that $h'(v)=\lim_{u\rightarrow\infty}\partial_{uv}W(u,v)$
	for all $v\in[0,\infty)$. Notice that this relation can be rewritten
	as
	
	\[
	\lim_{h\rightarrow0}\lim_{u\rightarrow\infty}F(u,v,h)=\lim_{u\rightarrow\infty}\lim_{h\rightarrow0}F(u,v,h)
	\]
	where
	
	\[
	F(u,v,h)=\frac{\partial_{u}W(u,v+h)-\partial_{u}W(u,v)}{h}.
	\]
	Thus, we need to show that the limits are interchangeable. 
	
	Since $\partial_{u}W(u,v)$ converges pointwise to $g(v)$ as $u\rightarrow\infty$
	and $\partial_{u}W(u,v)$ is monotonically increasing with respect
	to $u$, by Dini's Monotone Convergence Theorem $\partial_{u}W(u,v)$
	converges uniformly to $g(v)$ for $v\in[0,L]$ where $L$ is an arbitrary
	constant. Therefore $\lim_{u\rightarrow\infty}F(u,v,h)$ exists and
	converges uniformly. We furthermore know that the $\lim_{h\rightarrow0}F(u,v,h)$
	exists and converges pointwise. Thus, by the Moore-Osgood Theorem,
	the limits are interchangeable and the resulting values are equal. 
	
	In conclusion, we have that
	
	\begin{align*}
	\frac{d}{dv}M_{u}^{(\infty)}(v) & =\frac{d}{dv}\lim_{u\rightarrow\infty}\partial_{u}W(u,v)=\lim_{u\rightarrow\infty}\partial_{uv}W(u,v)=0
	\end{align*}
	Thus, for all $v\in[0,L]$, $M_{u}^{(\infty)}(v)$ is constant. Let
	$M_{u}^{(\infty)}:=M_{u}^{(\infty)}(v)$ and note that the upper bound
	$L$ was arbitrary and therefore we have that this equality holds
	for all $v\in[0,\infty)$.General rules for determining whether an
	LLF exists: Proofs\label{sec:General-rules-proofs}
\end{proof}

Below are the proofs for Corollary \ref{cor:LF-is-LLF} and \ref{cor:no-LLF-exists}
in the paper.
\begin{proof}
	[Proof of Corollary \ref{cor:LF-is-LLF}] We can immediately show
	that $W$ satisfies \ref{prop:level-set-tangent-lines} since 
	
	\begin{align*}
	\sup_{v\ge\underline{v}}\left|\frac{\partial_{u}W(u,v)}{\partial_{v}W(u,v)}\right| & \le\frac{w_{1}'(u)}{w_{2}'(v)}\le\frac{w_{1}'(u)}{w_{2}'(\underline{v})}<\infty\\
	\sup_{u\ge\underline{u}}\left|\frac{\partial_{v}W(u,v)}{\partial_{u}W(u,v)}\right| & \le\frac{w_{2}'(v)}{w_{1}'(u)}\le\frac{w_{2}'(v)}{w_{1}'(\underline{u})}<\infty.
	\end{align*}
	Additionally \ref{prop:finite-or-infinite-limits} is satisfied since
	\begin{align*}
	\lim_{u\rightarrow\infty}\partial_{u}W(u,v) & =\lim_{u\rightarrow\infty}w'_{1}(u)=C_{1}\\
	\lim_{v\rightarrow\infty}\partial_{v}H(u,v) & =\lim_{v\rightarrow\infty}w_{2}'(v)=C_{2}\\
	\partial_{uv}W(u,v) & =0
	\end{align*}
	where, due to \ref{prop:convex}, $C_{1}$ and $C_{2}$ are constants
	or infinite.
\end{proof}

\begin{proof}
	[Proof of Corollary \ref{cor:no-LLF-exists}] We will show that
	if there exists a $v$ such that (\ref{eq:f-positive}) is satisfied,
	then no LLF exists. The result for any $u$ and (\ref{eq:g-positive})
	follows analogously. Suppose there exists a LLF for the system. By
	\ref{prop:decreases-with-time}, there exists a $\underline{K}$ such
	that, if $\|(u,v)\|>\underline{K}$, then
	\begin{align}
	\left(\nabla W\right)(f,g)^{T} & =\partial_{u}W(u,v)f(u,v)+\partial_{v}W(u,v)g(u,v)\le0\label{eq:prop1-holds}
	\end{align}
	Pick $v>0$ and consider what happens in the limit as $u\rightarrow\infty$.
	By \ref{prop:decreases-with-time}, \ref{secondary-prop:u-v-underbar-exist},
	and (\ref{eq:f-positive}), there exists $\widetilde{u}$ such that
	if $u>\widetilde{u}$ then $u>\underline{K}$, $\partial_{u}W>0$,
	and $f(u,v)>0$. We therefore have that for $u\ge\widetilde{u}$,
	\[
	\frac{\partial_{v}W(u,v)}{\partial_{u}W(u,v)}\begin{cases}
	\le-\frac{f(u,v)}{g(u,v)}<0 & \text{if }g(u,v)>0\\
	\ge-\frac{f(u,v)}{g(u,v)}>0 & \text{if }g(u,v)<0.
	\end{cases}
	\]
	Note that since $f>0$ and $\partial_{u}W>0$, in order for (\ref{eq:prop1-holds})
	to hold, $g\ne0$. This set of inequalities implies that
	\[
	\left|\frac{\partial_{v}W(u,v)}{\partial_{u}W(u,v)}\right|\ge\left|\frac{f(u,v)}{g(u,v)}\right|.
	\]
	Note that, in the limit as $u\rightarrow\infty$, $\left|f(u,v)/g(u,v)\right|=\infty$
	and, therefore
	\[
	\sup_{u\ge\text{\ensuremath{\underbar{\ensuremath{u}}}}}\left|\frac{\partial_{v}W(u,v)}{\partial_{u}W(u,v)}\right|=\infty.
	\]
	This final equation gives us a contradiction to \ref{prop:level-set-tangent-lines}.
	Therefore, no LLF exists for the reactions.
\end{proof}

\section{Example Systems}

Below we provide details on the simulations and LLF results discussed in Section
\ref{subsec:example-systems}.

\subsection{Parameters for performing simulations\label{sec:simulation-info}}

Here we give the parameters used to perform the simulations shown
in Figure \ref{fig:example-phase-planes}. For simulations with diffusion
the system given by (\ref{eq:sysDisc}) was used where $n=2$. In
the simulation shown in the left panel, $f$ and $g$ are given by
(\ref{eq:reactions-bounded}) where $a=0.1$, $b=1$, $\gamma=150$,
$d=30$, $(u_{1,0},v_{1,0})=(0.8,0.1)$ and $(u_{2,0},v_{2,0})=(2.0,0.7)$.
For the simulation shown in the middle panel, $f$ and $g$ are given
by (\ref{eq:reactions-mutual}) where $a_{1}=-1$, $a_{2}=1$, $b_{1}=1$,
$b_{2}=2$, $c_{1}=1$, $c_{2}=1$, $\gamma=1$, $d=1$, $(u_{1,0},v_{1,0})=(3,0.001)$
and $(u_{2,0},v_{2,0})=(0.001,3)$. For the simulation shown in the
right panel, $f$ and $g$ are given by (\ref{eq:reactions-unbounded})
where $d=1$, $\gamma=1$, $\delta=10$, $u_{1,0}=v_{2,0}=2$, and
$u_{2,0}=v_{1,0}=4$.

\subsection{Existence of Lyapunov-like function for first example\label{app-sec:exampleSystemBounded}}

In this section we prove that the LLF given by (\ref{eq:example-LLF})
has the properties \ref{prop:decreases-with-time}--\ref{prop:finite-or-infinite-limits}
given in Section~\ref{subsec:lyapunov-like-function}. We will go
through each property individually:

\bgroup 
\renewcommand\theenumi{(P\arabic{enumi})} 
\renewcommand\labelenumi{\theenumi}
\begin{enumerate}
	\item We will show that for large enough $\|(u,v)\|$, $\left(\nabla W\right)(f,g)^{T}$
	is negative. We have that 
	\[
	\left(\nabla W\right)(f,g)^{T}=\left(1-\frac{c}{(u+1)^{2}}\right)(a-u+u^{2}v)+\left(2-\frac{1}{(1+v)^{2}}\right)(b-u^{2}v).
	\]
	Combining the terms and using $N(u,v)$ to represent the numerator
	we can write this equation as follows: 
	\[
	\left(\nabla W\right)(f,g)^{T}=\frac{N(u,v)}{(1+u)^{2}(1+v)^{2}}
	\]
	We will show that there exists a value $\text{\ensuremath{\underline{K}}}$
	such that if $\|(u,v)\|>\underline{K}$, then $N(u,v)<0$. To do this
	we will find threshold values for $u$ and $v$ separately. 
	\begin{enumerate}
		\item Let's first consider $u$. After some algebraic manipulates, we rewrite
		$N(u,v)$ as 
		\[
		\begin{aligned}N(u,v) & =(1+u^{2})(a+b-u)+u(2a+2b+c-2u)\\
		& +(2v+2u^{2}v+v^{2}+u^{2}v^{2})(a+2b-u)\\
		& +(2uv+uv^{2})(2a+4b+c-2u)\\
		& -ac-2acv-cu^{2}v-acv^{2}-(2c+2)u^{2}v^{2}\\
		& -4u^{3}v^{2}-2u^{4}v^{2}-u^{2}v^{3}-cu^{2}v^{3}-2u^{3}v^{3}-u^{4}v^{3}.
		\end{aligned}
		\]
		This equation is negative if $u>\widetilde{u}:=a+2b+\frac{c}{2}$. 
		\item Let's next consider $v$ and write $N(u,v)$ as follows 
		\[
		\begin{aligned}N(u,v) & =v^{2}(a+2b-\frac{ac}{2})+u^{2}v^{3}(a+2b-\frac{c}{2})+(a+b-2acv)\\
		& +u(2a+2b+c-2v)+u^{2}(a+b-cv)+v(3a+6b+\frac{c}{2}-\frac{ac}{2}v)\\
		& +uv(4a+8b+2c-v)+u^{2}v^{2}(a+2b-v)+2u^{2}v(a+2b-cv)\\
		& -ac-u-2u^{2}-u^{3}-4u^{2}v-2u^{3}v-(a+2b+\frac{c}{2})v(uv-1)^{2}\\
		& -4u^{2}v^{2}-5u^{3}v^{2}-2u^{4}v^{2}-2u^{3}v^{3}-u^{4}v^{3}.
		\end{aligned}
		\]
		This equation is negative if $c>\max\{(2a+4b)/a,2a+4b\}$ and 
		\[
		\begin{aligned}v & >\widetilde{v}:=\max\left\{ 4a+8b+2c,\frac{6a+12b+c}{ac},\frac{a+2b}{c}\right\} \end{aligned}
		\]
	\end{enumerate}
	Let $\underline{K}:=\widetilde{u}+\widetilde{v}$. If $\|(u,v)\|>\underline{K}$
	then either $u>\widetilde{u}$ or $v>\widetilde{v}$ and, therefore,
	$\left(\nabla W\right)^{T}(f,g)<0$.
	\item Taking the second derivatives of $W$ gives us: 
	\[
	\begin{aligned}\partial_{uu}W & =\frac{2c}{(u+1)^{3}}>0\\
	\partial_{vv}W & =\frac{2}{(v+1)^{3}}>0\\
	\partial_{uv}W & =0.
	\end{aligned}
	\]
	Thus, the desired inequalities are satisfied.
	\item We have that, 
	\[
	W(u,v)=\|(u,v)\|+v+\frac{c}{u+1}+\frac{1}{v+1}\ge\|(u,v)\|.
	\]
	Thus, as $\|(u,v)\|\rightarrow\infty$, $W(u,v)\rightarrow\infty$.
	\item Note, that for this system $\partial_{u}W>0$ if $u\ge c$ and $\partial_{v}W>0$
	if $v\ge0.$ Therefore, we set $\underline{u}=c$ and $\underline{v}=0$.
	For an arbitrary $u>0$, we have that 
	\begin{align*}
	& \sup_{v\ge\underline{v}}\left|\frac{\partial_{u}W(u,v)}{\partial_{v}W(u,v)}\right|=\sup_{v\ge0}\frac{\left|1-\frac{c}{(u+1)^{2}}\right|}{2-\frac{1}{(v+1)^{2}}}\le\left|1-\frac{c}{(u+1)^{2}}\right|<\infty
	\end{align*}
	and for an arbitrary $v>0$, we have that
	\[
	\sup_{u\ge\underline{u}}\left|\frac{\partial_{v}W(u,v)}{\partial_{u}W(u,v)}\right|=\sup_{u\ge c}\frac{2-\frac{1}{(v+1)^{2}}}{1-\frac{c}{(u+1)^{2}}}\le\frac{2-\frac{1}{(v+1)^{2}}}{1-\frac{c}{(c+1)^{2}}}<\infty.
	\]
	Thus, the specified supremums are finite.
	\item Taking the limits specified in the property gives us
	\begin{align*}
	\lim_{u\rightarrow\infty}\partial_{u}W(u,v) & =\lim_{u\rightarrow\infty}\left(1-\frac{c}{(u+1)^{2}}\right)=1\\
	\lim_{v\rightarrow\infty}\partial_{v}W(u,v) & =\lim_{v\rightarrow\infty}\left(2-\frac{1}{(v+1)^{2}}\right)=2\\
	\lim_{u\rightarrow\infty}\partial_{uv}W(u,v) & =\lim_{v\rightarrow\infty}\partial_{uv}W(u,v)=0.
	\end{align*}
	Therefore, all the limits exist and are finite.
\end{enumerate}
\egroup{} 

\subsection{No Lyapunov-like function exists for unbounded examples \label{sm-subsec:noLLFforExample}}

In this section we show that no LLF exists for the two unbounded example
systems. We will first consider the system given by (\ref{eq:reactions-mutual})
and suppose an LLF does exist. By \ref{prop:decreases-with-time}
we have that there exists a $\underline{K}>0$ such that if $\|(u,v)\|\ge\underline{K}$
then
\[
\partial_{u}W(u,v)(u(a_{1}-b_{1}u+c_{1}v))+\partial_{v}W(v(a_{2}+b_{2}u-c_{2}v))\le0.
\]

Suppose $v=\frac{b_{2}}{c_{2}}u$ and $v\ge\underline{v}$, $u\ge\underline{u}$.
We then have that $\partial_{u}W(u,v),\partial_{v}W(u,v)>0$ and
\begin{align*}
\partial_{u}W(u,v)\left(a_{1}u+\left(c_{1}\frac{b_{2}}{c_{2}}-b_{1}\right)u^{2}\right)+\partial_{v}W\left(a_{2}\frac{b_{2}}{c_{2}}u\right) & \le0.\\
\implies a_{1}u+\left(c_{1}\frac{b_{2}}{c_{2}}-b_{1}\right)u^{2} & \le0
\end{align*}
Note that as $u\rightarrow\infty,$the quadratic term dominates and
therefore we require that
\[
c_{1}b_{2}-b_{1}c_{2}\le0.
\]
However, recall that for the system to be strongly mutualistic we
require that $b_{2}c_{1}>b_{1}c_{2}$ and therefore we have a contradiction.
Therefore, no LLF exists for this system.

Next, we will use Corollary~\ref{cor:no-LLF-exists} to show that
no LLF exists for the reactions given by (\ref{eq:reactions-unbounded}).
Suppose we fix a value of $v>0$ and consider what happens in the
limit as $u\rightarrow\infty$. We have that at some point $u>v+1$
and $u>\delta/v-1.$ This leads to the follow inequalities
\begin{align*}
f(u,v) & =uv(u-v)(u+1)-\delta u>0\\
g(u,v) & =uv(v-u)(v+1)-\delta v<0
\end{align*}
and we have that
\[
\lim_{u\rightarrow\infty}\left|\frac{f(u,v)}{g(u,v)}\right|=\lim_{u\rightarrow\infty}\frac{uv(u-v)(u+1)-\delta u}{uv(u-v)(v+1)+\delta v}=\lim_{u\rightarrow\infty}\frac{(u+1)-\frac{\delta}{v(u-v)}}{(v+1)+\frac{\delta}{u(u-v)}}=\infty.
\]
Thus, by Corollary~\ref{cor:no-LLF-exists} no LLF for the system
exists.

\subsection{Unboundedness of example system \label{app-subsec:exampleUnboundedProof}}

In this section, we will show that the discretized RD system given
by (\ref{eq:sysDisc}) with parameters $\gamma=1$, $d=1$, and $n=2$
and reactions given by (\ref{eq:reactions-unbounded}) has the capacity
to become unbounded. We will use symmetric initial conditions (i.e.
$u_{1,0}=v_{2,0}$ and $u_{2,0}=v_{1,0}$). It follows that, due to
the symmetry of the reactions, $u_{1}(t)=v_{2}(t)$ and $u_{2}(t)=v_{1}(t)$
for all $t>0$. Therefore, the system reduces to
\begin{align}
\frac{du}{dt} & =uv(u-v)(u+1)-\delta u+4(v-u)\nonumber \\
\frac{dv}{dt} & =uv(v-u)(v+1)-\delta v+4(u-v)\label{app-eq:symmetric-ODE-system}\\
u(0) & =u_{0}\nonumber \\
v(0) & =v_{0}\nonumber 
\end{align}
where $u_{0}=u_{1,0}=v_{2,0}$ and $v_{0}=u_{2,0}=v_{1,0}.$ We can
then calculate the concentration of species in each compartment as
$u_{1}(t)=v_{2}(t)=u(t)$ and $v_{1}(t)=u_{2}(t)=v(t)$. 

Let's consider (\ref{app-eq:symmetric-ODE-system}) and calculate
how the difference between $u$ and $v$ evolves with time:
\begin{equation}
\begin{aligned}(u-v)_{t} & =uv(u-v)(u+v+2)+(8+\delta)(v-u)\\
& =(u-v)h(u,v).
\end{aligned}
\label{app-eq:dot-u-minus-v}
\end{equation}
where
\[
h(u,v):=uv(u+v+2)-(8+\delta),
\]
Therefore if,
\begin{equation}
\begin{aligned}u-v & >0\\
h(u,v) & >0
\end{aligned}
\label{app-eq:unbounded-IC-requirement}
\end{equation}
then $(u-v)_{t}>0$. 

We will show that there exists a $\delta>0$ such that for arbitrarily
small $\epsilon>0$, $dh/dt$ is positive along the curve
\begin{equation}
h(u,v)=\epsilon.\label{app-eq:curve}
\end{equation}
Therefore, if the initial data satisfies (\ref{app-eq:unbounded-IC-requirement})
then these inequalities will be satisfied for all time.

We first solve (\ref{app-eq:curve}) explicitly for $v$ to obtain
the positive solution
\begin{equation}
v=-1-\frac{1}{2}u+\sqrt{\frac{8+\delta+\epsilon}{u}+1+u+\frac{u^{2}}{4}}.\label{app-eq:curve2}
\end{equation}
Note that this function is symmetric about the line $u=v$ and it
is concave upwards (i.e., $d^{2}v/du^{2}>0$). We will show that there
exists a constant $C$ such that, along the curve given by (\ref{app-eq:curve}),
$u+v\ge C>0$. Suppose it is not the case (i.e., $u+v<C$) and add
$u$ to both sides of (\ref{app-eq:curve2}) to obtain
\[
C>u+v=-1+\frac{1}{2}u+\sqrt{\frac{8+\delta+\epsilon}{u}+1+u+\frac{u^{2}}{4}}.
\]
Using algebraic manipulations, we obtain the following inequality:
\begin{align*}
0 & >(2+C)u^{2}-C(2+C)u+8+\delta+\epsilon.
\end{align*}
The maximum value of $C$ which guarantees that no real solutions
to this equation exist is given by the solution to $0=C^{3}+2C^{2}-32$.
We will define $C$ as the one real root to this equation. We are
then guaranteed that along the curve (\ref{app-eq:curve}), $u+v\ge C$.

Next, we will assume $u-v>0$ and break up the curve given by (\ref{app-eq:curve})
into two regions: $0<u-v\le1$ and $1<u-v$. For both these regions
we will calculate $dh/dt$ along (\ref{app-eq:curve}) and show that,
under certain conditions, it is positive. We will further suppose
that $\epsilon+\delta<1$. 

Suppose $0<u-v\le1$. In order to show that $dh/dt$ is increasing
along the curve, we will find a lower bound for the value of $u+v$
and translate that into an upper bound on the value of $uv$. Due
to the positive second derivative and symmetry of (\ref{app-eq:curve2}),
we know the maximum value of $u+v$ along the curve occurs when $v=u-1$.
We will call this point $(u_{max},v_{max}).$ Note that $C\le u_{max}+v_{max}=2u_{max}-1$,
and therefore $u_{max}\ge(C+1)/2$. We will next calculate an upper
bound on $u_{max}$. We have that
\[
\begin{aligned}h(u_{max},v_{max}) & =\epsilon\\
u_{max}v_{max}(u_{max}+v_{max}+2) & =\epsilon+\delta+8\\
u_{max}(u_{max}-1)(2u_{max}+1) & =\epsilon+\delta+8\\
u_{max}-1 & =\frac{\epsilon+\delta+8}{u_{max}(2u_{max}+1)}\\
u_{max}-1 & <\frac{9}{\frac{(C+1)}{2}(C+2)}=\frac{18}{(C+1)(C+2)}\\
u_{max} & <\frac{18}{(C+1)(C+2)}+1
\end{aligned}
\]
It then immediately follows that $v_{max}<\frac{18}{(C+1)(C+2)}.$
We then have that, along the curve given by (\ref{app-eq:curve})
\[
uv=\frac{\epsilon+\delta+8}{u+v+2}>\frac{\epsilon+\delta+8}{u_{max}+v_{max}+2}>\frac{8}{\frac{36}{(C+1)(C+2)}+3}=:A.
\]
Finally, we consider and bound the derivative of $h$ along the curve.
Recall that $u+v>C$ and $\delta+\epsilon<1$. We have that along
\ref{app-eq:curve} when $0<u-v\le1$
\begin{align*}
\frac{dh(u,v)}{dt} & =(u-v)^{2}(2+u+v)(4-uv)+2uv+\left(uv\right)^{2}(u-v)^{2}-\delta uv(4+3(u+v))\\
& \ge(u-v)^{2}(4(2+u+v)-(8+\delta+\epsilon))+uv(2-\delta(4+3(u+v)))\\
& \ge(u-v)^{2}(4C-1)+uv(2+2\delta-3\delta(u+v+2))\\
& \ge uv(2+2\delta)-3\delta(8+\epsilon+\delta)\\
& \ge A(2+2\delta)-27\delta\\
& \ge2A-(27-2A)\delta.
\end{align*}
Thus, if $\delta<\frac{2A}{27-2A}$ then the derivative is increasing. 

Suppose $u-v>1.$ The derivative of $h$ along the curve is bounded
as follows:
\begin{align*}
\frac{dh(u,v)}{dt} & =(u-v)^{2}(2+u+v)(4-uv)+2uv+\left(uv\right)^{2}(u-v)^{2}-\delta uv(4+3(u+v))\\
& \ge(4(2+u+v)-(8+\delta+\epsilon))-\delta uv(4+3(u+v)))\\
& \ge(4C-1)-\delta uv(-2+3(u+v+2)))\\
& \ge(4C-1)+2\delta uv-3\delta(8+\epsilon+\delta)\\
& \ge(4C-1)-3\delta(8+\epsilon+\delta)\\
& \ge(4C-1)-27\delta
\end{align*}
So the derivative is positive if $\delta<\frac{4C-1}{27}$. 

Therefore, for small enough $\delta$, if the system satisfies (\ref{app-eq:unbounded-IC-requirement})
it will continue to do so for all time. Numerically, we determined
that if $\delta\le0.13$ then the necessary conditions are satisfied.

Using (\ref{app-eq:dot-u-minus-v}) and assuming the initial data
satisfies (\ref{app-eq:unbounded-IC-requirement}), we have that for
small enough $\epsilon>0$.
\begin{align*}
(u-v)_{t} & \ge\epsilon(u-v).
\end{align*}
Using Grönwall's inequality, we then have that

\[
u(t)-v(t)\ge(u_{0}-v_{0})e^{\epsilon t}.
\]
Therefore, since $v(t)\ge0$, $u(t)\rightarrow\infty$ as $t\rightarrow\infty$.
Note that analogously we could pick initial conditions where $v_{0}>u_{0}$
and $u_{0}v_{0}(u_{0}+v_{0}+2)>8+\delta$ and this would lead to a
blow up of $v(t)$. 
\end{document}